\newcommand{\abs}[1]{\vert#1\vert}
\newcommand{\dsp}{\displaystyle}
\newcommand{\bU}{{\mathbf U}}
\newcommand{\ovV}{\overline{V}}
\newcommand{\ovv}{\overline{v}}
\newcommand{\uU}{\underline{U}}
\newcommand{\uV}{{\underline V}}
\newcommand{\R}{{\mathbb R}}
\newcommand{\dt}{\partial_t}
\newcommand{\dz}{\partial_z}
\newcommand{\dx}{\partial_x}
\newcommand{\dy}{\partial_y}
\newcommand{\curl}{\mbox{\textnormal{curl} }}
\newcommand{\eps}{\varepsilon}
\newcommand{\Rey}{{\mathbf R}}
\newcommand{\bom}{\boldsymbol{\omega}}
\newtheorem{prop}{Proposition}
\newtheorem{lemma}{Lemma}
\theoremstyle{remark}
\newtheorem{remark}{Remark}
\begin{document}

\title{Modeling shallow water waves}
\author{D. Lannes}
\address{David Lannes, Universit\'e de Bordeaux et CNRS, 351 Cours de la Lib\'eration, 33405 Talence Cedex, France}
\thanks{D. L. is supported by the Fondation Del Duca de l'Acad\'emie des Sciences, the ANR grants ANR-17-
CE40-0025 NABUCO and ANR-18-CE40-0027-01 Singflows, and the Conseil R\'egional d'Aquitaine}
\maketitle

\begin{abstract}
We review here the derivation of many of the most important models that appear in the literature (mainly in coastal oceanography) for the description of waves in shallow water. We show that these models can be obtained using various asymptotic expansions of the "turbulent" and non-hydrostatic terms that appear in the equations that result from the vertical integration of the free surface Euler equations. Among these models are the well-known nonlinear shallow water (NSW), Boussinesq and Serre-Green-Naghdi (SGN) equations for which we review several pending open problems. More recent models such as the multi-layer NSW or SGN systems, as well as the Isobe-Kakinuma equations are also reviewed under a unified formalism that should simplify comparisons. We also comment on the scalar versions of the various shallow water systems which  can be used to describe unidirectional waves in horizontal dimension $d=1$; among them are the KdV, BBM,  Camassa-Holm and  Whitham equations. Finally, we show how to take vorticity effects into account in shallow water modeling, with specific focus on the behavior of the turbulent terms. As examples of challenges that go beyond the present scope of mathematical justification, we review recent works using shallow water models with vorticity to describe wave breaking, and also derive models for the propagation of shallow water waves over strong currents.
\end{abstract}
\section{General introduction}


\begin{figure}
\begin{center}
\includegraphics[width=0.7\linewidth]{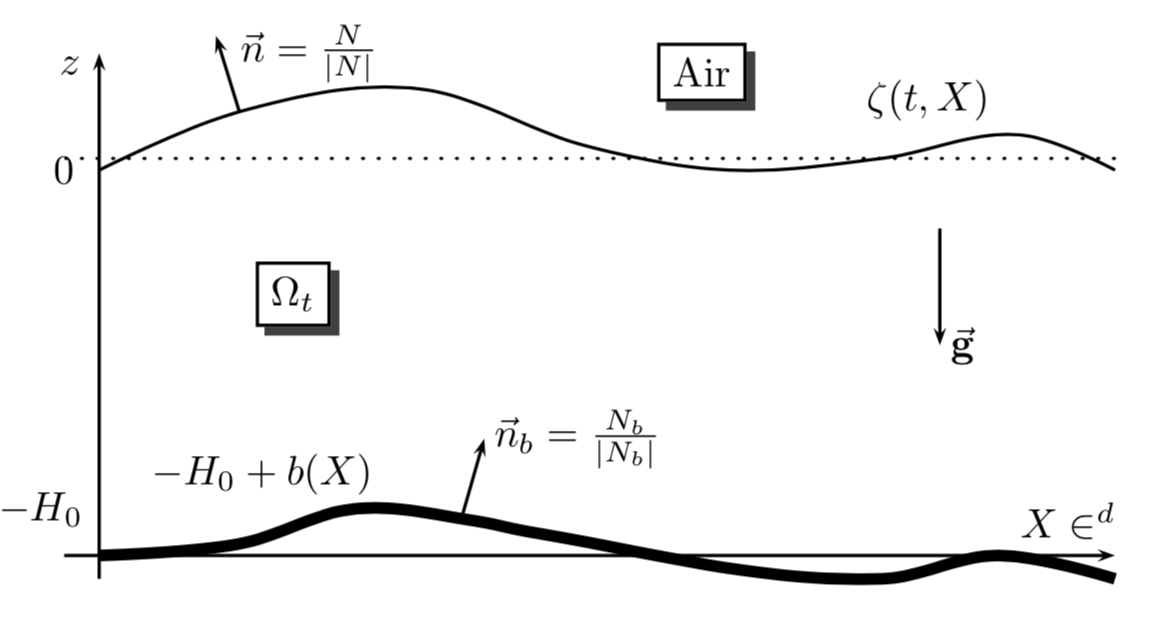}
\end{center}
\caption{Main notations.}
\end{figure}

\subsection{Brief overview}

The goal of this article is to review several models that have been derived for the modeling of shallow water flows, with a specific focus on those of interest for applications to coastal oceanography. Most of these models, such as the Nonlinear Shallow Water (NSW), the Boussinesq and the Serre-Green-Naghdi (SGN) equations, have been derived many years ago, and their full justification as approximations of the water waves equations is also well established (see for instance \cite{Lannes2013} and references therein). However, many mathematical questions related to these models remain open and some of them are of great physical relevance. To mention just a few, the formation of singularities (related to the well known phenomenon of wave breaking for instance) or the mathematical understanding of several boundary conditions (e.g. wall, generating, transparent) for nonlinear dispersive models are real issues encountered by oceanographers and computers scientists when they develop operational numerical wave models. We tried in this paper to present these open mathematical questions in their physical context, with the hope to spark new mathematical studies and advances on these physically motivated issues. 

Also included in this review are less standard and more recent models, such as multi-layer shallow water, Boussinesq and Serre-Green-Naghdi equations, the Isobe-Kakinuma model and rotational shallow water models. For these systems of equations, many mathematical questions remain open. A careful and unified derivation of these less known models is proposed here, which should allow one to compare them one with another. In the process, we also propose some new sets of equations that do not seem to have been studied before. We also present an application of rotational shallow water models to the modeling of wave breaking; a full justification of such models is out of reach since wave breaking is an extremely complex phenomenon, but the ability of equations based on the shallow water system have proved surprisingly efficient to account correctly for the wave breaking phenomenon.

As already said, we restricted our attention here to shallow water modeling for coastal flows, but such models also occur in many other contexts: geophysical flows at larger  scales with Coriolis effects (rotating fluids, see for instance \cite{zeitlin2007,gallagher}), granular flows and debris flows (e.g. \cite{bouchut2015}), internal waves (e.g. \cite{Craig:2005zl,Bona:2008ff,duchene2011} and the extensive review \cite{saut2013asymptotic}) and, more recently, the interaction between waves and floating or partially immersed objects (\cite{Lannes2017,bosi2019spectral,bocchi2019return,godlewski2018congested,bresch2019waves}). These are not treated in the present paper.

\subsection{Organization of the paper}

Starting from the Euler equations with a free surface, we derive in Section \ref{sectbasic} several formulations of the water waves equations, and write them in dimensionless form. When written in elevation-discharge formulation, the water waves equations take the form of the nonlinear shallow water equations with two additional terms:  a "turbulent" term and a term accounting for the non-hydrostatic effects of the pressure. The derivation of approximate models to the water waves equations is done by an asymptotic analysis of these two terms in the shallow water regime, that is, when the depth is much smaller than the typical horizontal scale.

This asymptotic analysis is performed in Section \ref{sect2}. We first describe the inner structure of the velocity and pressure fields in \S \ref{sectinner} and \S \ref{sectNH} respectively. We then show in \S \ref{sectNSW} that at leading order, the turbulent and non-hydrostatic terms can be neglected so that the behavior of the waves is described at leading order by the nonlinear shallow water (NSW) equations. In \S \ref{sectBoussinesq}, we work with a higher precision, but make a smallness assumption on the size of the waves (the so called weak nonlinearity assumption); we show that some non-hydrostatic terms, responsible for dispersive effects, must be kept, leading to the Boussinesq systems. Removing the smallness assumption, we obtain in \S \ref{sectSGN} the more complicated Serre-Green-Naghdi (SGN) model. For all these different models, we review known mathematical results and mention several open problems. Finally, we describe the multi-layer approach (\S \ref{sectmultilayer}) and the Isobe-Kakinuma model (\S \ref{sectIsobe}) which have been derived to have a better resolution of the vertical structure of the velocity and/or to improve the precision of Boussinesq or SGN models without introducing high order derivatives that are numerically difficult to implement. In dimension $d=1$ and in shallow water, perturbations of the surface elevation essentially split into two counter propagating waves. Under certain assumptions, it is possible to describe the behavior of one of these components independently of the other. The interest is that it is governed by a single scalar equation, much easier to analyze and from which one can therefore gain some useful insight on the wave. Such scalar models are considered in \S \ref{sectscalar}. We finally explain briefly in \S \ref{sectjustif} the procedure to rigorously justify all these models.

Finally, Section \ref{sectrot} is devoted to the derivation of shallow water models in the presence of vorticity. We first generalize in \S \ref{sectWWrot} various formulations of the water waves equations when the vorticity is non zero and introduce the notion of vorticity strength. In particular, we show that a rigorous derivation of rotational shallow water models is possible up to a certain vorticity strength. The influence of the vorticity (and more specifically of the shear velocity it induces) on the inner structure of the velocity and pressure fields is described in \S \ref{sectinnervort}. The consequences on the NSW and SGN equations is then studied in \S \ref{sectNSWSGNvort}; the main consequence is that the "turbulent" term in the elevation-discharge formulation of the water waves equations cannot be neglected anymore, and that the SGN equations must be extended with a third equation on the "turbulent tensor". This latter model can be rigorously justified. There exist also other models that have not been justified so far but that are physically relevant; we describe for instance in \S \ref{WB} a model aiming at modeling wave breaking through "enstrophy creation" and, in \S \ref{sectlargevort}, we formally derive NSW and Boussinesq models in the presence of a "strong" vorticity.

\medbreak

\noindent
{\bf Acknowledgement.} The author wants to express his gratitude to V. Duch\^ene, E. Fernandez-Nieto and J.-C. Saut for their precious comments on this work. Many thanks also to the organizers of the CEMRACS 2019 where I gave a course based on these notes; the present article owes a lot to the discussions held there with the participants.

\subsection{Notations}
Let us give here several notations that will be used throughout this paper.
\begin{itemize}
\item $d=1,2$ denotes the horizontal dimension and $X\in \R^d$ the horizontal variables; the vertical variable is denoted $z$.
\item We denote by $\nabla_{X,z}$ the $(d+1)$-dimensional gradient operator, and by $\nabla$ the $\R^d$-dimensional gradient taken with respect to the variable $X$ only. Similar conventions are used for $\Delta_{X,z}$  and $\Delta$.
\item The velocity field in the fluid domain is denoted $\bU\in \R^{d+1}$. We denote by $V\in \R^d$ and $w$ its horizontal and vertical components respectively. When $d=1$ we write $v$ instead of $V$.
\item We denote by $Q=\int_{-h_0+b}^{\zeta} V$ the horizontal discharge and by $\ovV=Q/h$ ($h=h_0+\zeta-b$) the vertically averaged horizontal velocity; in dimension $d=1$, these quantities are denoted $q$ and $\ovv$ respectively. 
\item We use the notation $f(D)$ for Fourier multipliers defined, when possible, by $\widehat{f(D)u}=\widehat{f}\widehat{u}$, the notation $\widehat{\cdot}$ standing for the Fourier transform on $\R^d$.
\end{itemize}

\section{Basic equations}\label{sectbasic}

Starting from the free surface Euler equations (\S \ref{sectFSE}), we derive two formulations of the water waves problem: the Zakharov-Craig-Sulem formulation in \S \ref{sectZCS}, which is very convenient for the mathematical analysis of the equations, and the elevation-discharge formulation in \S \ref{sectelevdisc}, whose structure is much closer to the various shallow water models derived in this paper. The dimensionless version of these formulations is then derived in \S \ref{sectnondim} and will be used throughout this paper to derive asymptotic approximations of the water waves equations in shallow water.

\subsection{The free surface Euler equations}\label{sectFSE}

Denoting by $X\in \R$  ($d=1,2$) the horizontal coordinates and by $z$ the vertical coordinate, we assume that the elevation of the surface of the water above the rest state $z=0$ is given at time $t$ by the graph of a function $\zeta(t,\cdot)$, and that the bottom is parametrized by a time independent function $-h_0+b$ ($h_0>0$ is a constant); the domain occupied by the fluid at time $t$ is therefore
$$
\Omega_t=\{ (X,z)\in \R^d\times \R, -h_0+b(X)<z <\zeta(t,X)\}.
$$
We also denote by ${\bf U}(t,X,z) \in \R^{d+1}$ the velocity of a fluid particle located at $(X,z)$ at time $t$, and by $V(t,X,z)\in \R^d$ and $w(t,X,z)$ its horizontal and vertical component respectively. For a non viscous fluid of constant density $\rho$, the balance of forces in the fluid domain is given by the Euler equations
\begin{equation}\label{Euler1}
\dt \bU +\bU\cdot \nabla_{X,z} \bU=-\frac{1}{\rho}\nabla_{X,z} P - g {\bf e}_z \quad \mbox{ in }\quad \Omega_t,
\end{equation}
where $g$ is the acceleration of gravity and ${\bf e}_z$ is the unit upwards vertical vector. Incompressibility then takes the form
\begin{equation}\label{Euler2}
\nabla_{X,z}\cdot \bU=0 \quad \mbox{ in }\quad \Omega_t,
\end{equation}
and we also assume that the flow is irrotational 
\begin{equation}\label{Euler3}
\nabla_{X,z}\times \bU=0 \quad \mbox{ in }\quad \Omega_t;
\end{equation}
we discuss in Section \ref{sectrot} how to remove this latter assumption.

In addition to the equations \eqref{Euler1}-\eqref{Euler3} which are given in the fluid domain $\Omega_t$, we need boundary conditions. Two of them are given at the surface: the first one is the so-called kinematic boundary condition and expresses the fact that fluid particles do not cross the surface
\begin{equation}\label{Euler4}
\dt \zeta - \uU\cdot N =0
\end{equation}
with the notations
$$
 \uU(t,X)={\bf U}(t,X,\zeta(t,X)) \quad \mbox{ and }\quad N=\left(\begin{array}{c} -\nabla \zeta \\ 1 \end{array}\right);
$$
the second boundary condition at the surface is the so-called dynamic boundary condition
\begin{equation}\label{Euler5}
P=P_{\rm atm}=\mbox{constant}\quad\mbox{ on }\quad \{z=\zeta(t,X)\}.
\end{equation}
\begin{remark}
The condition \eqref{Euler5} means that surface tension is neglected, which is relevant for applications to coastal oceanography where the scales involved are significantly larger than the capillary scale; see for instance \cite{Lannes2013} and references therein for generalizations including surface tension.\\
Inversely, the scales considered in coastal oceanography are in general small enough to neglect the variations of the atmospheric pressure. In some specific cases such as storms or meteotsunamis for instance, it is however relevant to consider 	a variable surface pressure \cite{Melinand}.
\end{remark}
Finally, a last boundary condition is needed at the bottom, assumed to be impermeable
\begin{equation}\label{Euler6}
U_{\rm b}\cdot N_{\rm b}=0,
\end{equation}
with the notations
$$
 U_{\rm b}(t,X)={\bf U}(t,X,-h_0+b(t,X)) \quad \mbox{ and }\quad N_{\rm b}=\left(\begin{array}{c} -\nabla b \\ 1 \end{array}\right).
$$

The question of solving equations \eqref{Euler1}-\eqref{Euler6} is a free surface problem in the sense that the equations are cast on a domain which is itself one of the unknowns (as $\Omega_t$ is determined by $\zeta(t\cdot)$). In order to solve it, it is necessary to find an equivalent formulation in which the equations are cast in a fixed domain. To mention only the local Cauchy problem, several equivalent formulations have been used: a Lagrangian formulation of the free surface in the pioneering work \cite{Nalimov} that solved the problem when $d=1$ and for small data, as well as in \cite{Wu1,Wu2} where the assumption of small data was removed and the result extended to the two dimensional case $d=2$; a variational and geometrical approach based on Arnold's remark that the motion of an inviscid incompressible fluid can be viewed as the geodesic flow on the infinite-dimensional manifold of volume-preserving diffeomorphisms \cite{Shatah}; a full Lagrangian formulation of Euler's equations \cite{Lindblad,Coutand2007}, etc. We describe below two other formulations: one is Zakharov's Hamiltonian formulation \cite{Zakharov1968} whose well-posedness was proved in \cite{Lannes2005} (see also \cite{Alazard2014} for the low regularity Cauchy problem and \cite{Alvarez,Iguchi2009} for uniform bounds in several asymptotic regimes), as well as a formulation in $(\zeta,Q)$, where $Q$ is the horizontal discharge, that proves very useful to derive and understand the mechanism at stake in shallow water asymptotic models. For other recent mathematical advances on the water waves equations, such as long time/global existence, we refer to the surveys \cite{ionescu,Delort}.

%
%
%
%

\subsection{The Zakharov-Craig-Sulem formulation}\label{sectZCS}

From the irrotationality assumption, there exists a velocity potential $\Phi$ such that $\bU=\nabla_{X,z}\Phi$. The Euler equation \eqref{Euler1} reduces therefore to the Bernoulli equation
\begin{equation}\label{Bernoulli}
\dt \Phi + \frac{1}{2} \abs{\nabla_{X,z}\Phi}^2 + gz =- \frac{P-P_{\rm atm}}{\rho}.
\end{equation}
From the incompressibility condition \eqref{Euler2} and the bottom boundary condition \eqref{Euler6}, we also know that $\Delta_{X,z}\Phi=0$ in $\Omega_t$ and that $N_b\cdot \nabla_{X,z}\Phi=0$ at the bottom. It follows that $\Phi$ (and therefore the velocity field $\bU$) is fully determined by the knowledge of its trace $\psi$ at the surface, $\psi(t,X)=\Phi(t,X,\zeta(t,X))$. The full water waves equations \eqref{Euler1}-\eqref{Euler6} can therefore be reduced to a set of two evolution equations on $\zeta$ and $\psi$. The equation for $\zeta$ is furnished by the kinematic equation \eqref{Euler4} while the equation on $\psi$ is obtained by taking the trace of the Bernoulli equation \eqref{Bernoulli} at the surface. Zakharov remarked in \cite{Zakharov1968} that these equations can be put in canonical Hamiltonian form,
\begin{equation}\label{Hamil}
\dt \zeta=\frac{\delta H}{\delta \psi} , \qquad \dt \psi= -\frac{\delta H}{\delta \zeta}, 
\end{equation}
where the Hamiltonian is $H=\frac{1}{\rho}E$, with $E$ the mechanical (potential+kinetic) energy,
$$
H(\zeta,\psi)=\frac{1}{2}\int_{\R^d}g \zeta^2+ \frac{1}{2}\int_{\R^d}\int_{-h_0+b}^\zeta \abs{\nabla_{X,z}\Phi}^2.
$$
\begin{remark}\label{remLuke}
It was also remarked by Luke \cite{Luke}  that the water waves problem has a variational structure. Indeed, defining a Lagrangian density and an action by
\begin{equation}\label{eqLuke}
{\mathcal L}_{\rm Luke}(\Phi,\zeta)=-\int_{-h_0+b}^\zeta \Big( \dt \Phi+\frac{1}{2}\vert \nabla_{X,z}\Phi\vert^2 +g z \Big) {\rm d}z
\end{equation}
and
$$
 {\mathcal I}(\Phi,\zeta)=\int_{t_0}^{t_1}\int_{\R^d} {\mathcal L}_{\rm Luke}(\Phi,\zeta) {\rm d}{X}{\rm d}z {\rm d}t,
$$
 he showed that the corresponding Euler-Lagrange equation coincides with the water waves equations. We refer to \cite{miles1977hamilton} for considerations on the relation between Luke's Lagrangian and Zakharov's Hamiltonian appraoches.
\end{remark}
Introducing the Dirichlet-Neumann operator $G[\zeta,b]$ defined by
$$
G[\zeta,b]\psi= N\cdot \nabla_{X,z}\Phi_{\vert_{z=\zeta}}
\quad\mbox{ where }\quad
\begin{cases}
\Delta_{X,z}\Phi=0 &\mbox{in } \Omega_t \\
\Phi_{\vert_{z=\zeta}}=\psi ,&
N_b\cdot \nabla_{X,z}\Phi_{\vert_{z=-h_0+b}}=0,
\end{cases}
$$
Craig and Sulem \cite{Craig1,Craig2} wrote the equation on $\zeta$ and $\psi$ in explicit form
\begin{equation}\label{ZCS}
\begin{cases}
\dsp \dt \zeta - G[\zeta,b]\psi=0,\\
\dsp \dt \psi + g \zeta +\frac{1}{2}\abs{\nabla \psi}^2 -\frac{1}{2}\frac{(G[\zeta,b]\psi+\nabla\zeta\cdot \nabla\psi)^2}{1+\abs{\nabla\zeta}^2}=0.
\end{cases}
\end{equation}
The local well posedness of this formulation was proved in \cite{Lannes2005}. Not to mention other related issues such as global well posedness for small data, this local existence result has been extended in two different directions: low regularity in \cite{Alazard2014} and uniform bounds in shallow water \cite{Alvarez,Iguchi2009}. These two extensions go somehow in two opposite directions as low regularity focuses on the behavior at high frequencies, while the shallow water limit, considered throughout these notes,  is essentially a low frequency asymptotic. 

\subsection{The elevation/discharge formulation}\label{sectelevdisc}

The Zakharov-Craig-Sulem equations are a set of evolution equations on two functions, $\zeta$ and $\psi$, that do not depend on the vertical variable $z$. Another way of getting rid of the vertical variable is to integrate vertically the free surface Euler equations. Denoting by $V$ and $w$ the horizontal and vertical components of the velocity field $\bU$, this leads to the introduction of the {\it horizontal discharge} $Q$,
\begin{equation}\label{defQ}
Q(t,X):=\int_{-h_0+b(X)}^{\zeta(t,X)} V(t,X,z){\rm d}z;
\end{equation}
integrating the horizontal component of the Euler equation \eqref{Euler1} and using the boundary conditions \eqref{Euler4} and \eqref{Euler6}, this gives
\begin{equation}\label{formzetaQ}
\begin{cases}
\dt \zeta+\nabla\cdot Q=0,\\
\dt Q+\nabla\cdot \Big(\int_{-h_0+b}^\zeta V\otimes V\Big)+\frac{1}{\rho}\int_{-h_0+b}^\zeta \nabla P=0
\end{cases}
\end{equation}
(see for instance the proof of Proposition 3 in \cite{Lannes2017} for details of the computations). 
\begin{remark}\label{remnoncons}
Note that the second equation can also be written as
$$
\dt Q+\nabla\cdot \Big(\int_{-h_0+b}^\zeta \big(V\otimes V+  \frac{P}{\rho}\mbox{Id}\big)\Big)=- (\frac{1}{\rho}  P)_{\vert_{z=-h_0+b}}\nabla b  ;
$$
 when the bottom is flat, the right-hand-side vanishes and the equation takes the form of a conservation law for the horizontal momentum as observed in \cite{whitham1962} in the one dimensional case. When the bottom is not flat, the right-hand-side is non zero and non conservative. Even in the shallow water approximation where it is reduced to $- g (h_0+\zeta-b) \nabla b$, this additional term may induce considerable difficulties (see \S \ref{sectweakNSW} below).
  \end{remark}
The next step is to decompose the pressure term. A special solution to the free surface Euler equations \eqref{Euler1}-\eqref{Euler6} corresponds to the rest state $\zeta=0$, $U=0$; the vertical component of the Euler equation \eqref{Euler1} and the boundary condition \eqref{Euler6} then give the following ODE for $P$,
$$
-\frac{1}{\rho}\dz P-g=0, \qquad P_{\vert_{z=0}}=P_{\rm atm},
$$
and the solution, $P=P_{\rm atm}-\rho g z$ is called \emph{hydrostatic} pressure. When the fluid is not at rest, the solution to the ODE
$$
-\frac{1}{\rho}\dz P-g=0, \qquad P_{\vert_{z=\zeta}}=P_{\rm atm},
$$
namely, $P_{\rm H}=P_{\rm atm}-\rho g (z-\zeta)$ is still called hydrostatic and it is often convenient to decompose the pressure field $P$ into its hydrostatic and non-hydrostatic components,
$$
P=P_{\rm atm}+\rho g (\zeta-z)+P_{\rm NH};
$$
integrating the vertical component of \eqref{Euler1} from $z$ to $\zeta$ and taking into account the boundary condition \eqref{Euler6}, one readily derives the following expression for the non-hydrostatic pressure,
\begin{equation}\label{PNH}
P_{\rm NH}(t,X,z)=\rho \int_z^{\zeta(t,X)} (\dt w +{\bf U}\cdot \nabla_{X,z}w).
\end{equation}
The evolution equations on $\zeta$ and $Q$ can then be written under the form
$$
\begin{cases}
\dt \zeta+\nabla\cdot Q=0,\\
\dt Q+\nabla\cdot \Big(\int_{-h_0+b}^\zeta V\otimes V\Big)+gh\nabla \zeta+\frac{1}{\rho}\int_{-h_0+b}^\zeta \nabla P_{\rm NH}=0,
\end{cases}
$$
where $h$ is the water height, $h=h_0+\zeta-b$.
The quadratic term in the second equation shows the importance of measuring the vertical dependance of the horizontal velocity $V$; this dependance is considered as a variation with respect to the vertical average of $V$. More precisely, we decompose the horizontal velocity field as
$$
V(t,X,z)=\overline{V}(t,X)+V^*(t,X,z)
$$
where for any function $f(t,\cdot)$ defined in the fluid domain $\Omega_t$, we use the notation
$$
\overline{f}(t,X)=\frac{1}{h}\int_{-h_0+b}^\zeta f (t,X,z){\rm d}z\quad \mbox{ and }\quad f^*(t,X,z)=f(t,X,s)-\overline{f}(t,X).
$$
We can therefore write 
\begin{equation}\label{defR}
\int_{-h_0+b}^\zeta V\otimes V= \frac{1}{h}Q\otimes Q+\Rey
\quad\mbox{ with }\quad
\Rey=\int_{-h_0+b }^\zeta V^*\otimes V^*
\end{equation}
so that the equations take the form
\begin{equation}\label{formzetaQ3}
\begin{cases}
\dsp \dt \zeta+\nabla\cdot Q=0,\\
\dsp \dt Q+\nabla\cdot(\frac{1}{h}Q\otimes Q)+gh\nabla \zeta+\nabla\cdot\Rey+\frac{1}{\rho}\int_{-h_0+b}^\zeta \nabla P_{\rm NH}=0.
\end{cases}
\end{equation}
\begin{remark}
The average horizontal velocity $\overline{V}$ and the horizontal discharge $Q$ are related through $Q=h \overline{V}$. Instead of \eqref{formzetaQ3}, one can therefore equivalently write a system of equations on the variables $\zeta$ and $\ovV$, namely,
\begin{equation}\label{formzetaQ3V}
\begin{cases}
\dsp \dt \zeta+\nabla\cdot (h\ovV)=0,\\
\dsp \dt \ovV+\ovV\cdot \nabla \ovV+g\nabla \zeta+\frac{1}{h}\nabla\cdot\Rey+\frac{1}{\rho h}\int_{-h_0+b}^\zeta \nabla P_{\rm NH}=0.
\end{cases}
\end{equation}
\end{remark}
Obviously, the last two terms of the second equation in \eqref{formzetaQ3} are the most complicated ones. To begin with, they are defined through \eqref{PNH} and \eqref{defR} in terms of the velocity field $\bU(t,X,z)$ and not in terms of $\zeta$ and $Q$. We can however state the following result in which $\Omega$ stands for the fluid domain corresponding to the surface parametrization $\zeta$.
\begin{prop}\label{propclosed}
The equations \eqref{formzetaQ3} form a closed set of equations in $\zeta$ and $Q$. More precisely, if we denote
$$
L^2_b(\Omega,{\rm div},\curl):=\{{\bf U}\in L^2(\Omega)^{d+1}, {\rm div }\,{\bf U}=0, \curl \bU=0 \quad\mbox{and}\quad U_b\cdot N_b=0 \},
$$
then the discharge and reconstruction mappings respectively defined by
$$
{\mathfrak D}[\zeta]: \begin{array}{lcl}
L^2_b(\Omega,{\rm div},\curl) & \to & H^{1/2}(\R^d)^d \\
\bU=\left(\begin{array}{c} V \\ w \end{array}\right) &\mapsto & Q:=\int_{-h_0+b}^\zeta V
\end{array}
$$
and
$$
{\mathfrak R}[\zeta]: \begin{array}{lcl}
H^{1/2}(\R^d)^d  & \to & L^2_b(\Omega,{\rm div},\curl)  \\
Q &\mapsto & \nabla_{X,z}\Phi
\end{array}
\quad\mbox{ with }\quad
\begin{cases}
\Delta_{X,z}\Phi=0  \quad \mbox{ in }\Omega,\\
N\cdot \nabla_{X,z}\Phi _{\vert_{z=\zeta}}=-\nabla\cdot Q\\
N_b\cdot \nabla_{X,z}\Phi _{\vert_{z=-h_0+b}}=0
\end{cases}
$$
are well defined and ${\mathfrak R}[\zeta]$ is a left-inverse to ${\mathfrak D}[\zeta]$.
\end{prop}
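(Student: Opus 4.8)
The plan is to build everything on a single algebraic identity relating the divergence of $Q$ to the trace of the velocity at the surface. First I would observe that a field $\bU\in L^2_b(\Omega,{\rm div},\curl)$, being curl-free, derives from a potential $\bU=\nabla_{X,z}\Phi$ with $\Phi$ in the homogeneous space $\dot H^1(\Omega)$ (defined modulo constants), and that the divergence-free and bottom conditions say precisely that $\Phi$ is harmonic and satisfies $N_b\cdot\nabla_{X,z}\Phi_{\vert_{z=-h_0+b}}=0$. Integrating $\dive\bU=0$ vertically from the bottom to the surface and applying the Leibniz rule, the bottom boundary term cancels thanks to $U_{\rm b}\cdot N_{\rm b}=0$, leaving the key identity
\begin{equation}\label{keyid}
\nabla\cdot Q=-\uU\cdot N=-N\cdot\nabla_{X,z}\Phi_{\vert_{z=\zeta}},
\end{equation}
that is, $\nabla\cdot Q=-G[\zeta,b]\psi$ with $\psi=\Phi_{\vert_{z=\zeta}}$. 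This is exactly the surface data prescribed in the definition of $\mathfrak R[\zeta]$, which is what will make the left-inverse property immediate.

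To show that $\mathfrak D[\zeta]$ is well defined, I would note that $\psi=\Phi_{\vert_{z=\zeta}}\in H^{1/2}(\R^d)$ by the trace theorem, and that $Q$ depends on $\psi$ only through a zeroth-order (bounded-symbol) operator: this is transparent on the flat strip, where a Fourier computation gives $\widehat Q=i\frac{\xi}{\abs\xi}\tanh(h_0\abs\xi)\widehat\psi$, and follows in general from the elliptic estimates for the Dirichlet--Neumann operator recalled in \cite{Lannes2013}; hence $Q\in H^{1/2}(\R^d)^d$. For $\mathfrak R[\zeta]$, I would solve the Neumann problem variationally: since $Q\in H^{1/2}$ one has $\nabla\cdot Q\in H^{-1/2}$, so that the linear form $\varphi\mapsto\int_{\R^d}Q\cdot\nabla(\varphi_{\vert_{z=\zeta}})$ is continuous on $\dot H^1(\Omega)$, and coercivity of $\int_\Omega\abs{\nabla_{X,z}\varphi}^2$ together with the Lax--Milgram lemma yields a unique $\Phi\in\dot H^1(\Omega)$ modulo constants. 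Consequently $\nabla_{X,z}\Phi$ is uniquely determined and, being a harmonic gradient satisfying the bottom condition, belongs to $L^2_b(\Omega,{\rm div},\curl)$.

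The left-inverse property then follows directly from \eqref{keyid}: given $\bU=\nabla_{X,z}\Phi\in L^2_b(\Omega,{\rm div},\curl)$ and $Q=\mathfrak D[\zeta]\bU$, the potential $\Phi$ already solves the harmonic problem with surface data $N\cdot\nabla_{X,z}\Phi_{\vert_{z=\zeta}}=-\nabla\cdot Q$ and the homogeneous bottom condition; by the uniqueness just established, the potential reconstructed by $\mathfrak R[\zeta]$ coincides with $\Phi$ up to an additive constant, so that $\mathfrak R[\zeta]\mathfrak D[\zeta]\bU=\nabla_{X,z}\Phi=\bU$. Closedness of \eqref{formzetaQ3} is then an immediate corollary: $\mathfrak R[\zeta]$ reconstructs the full velocity field $\bU$ (hence $V^*$ and, through \eqref{PNH}, $P_{\rm NH}$) from the sole data $(\zeta,Q)$, so the tensor $\Rey$ and the non-hydrostatic term are genuinely functionals of $(\zeta,Q)$ and the system indeed governs the evolution of $(\zeta,Q)$ alone.

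I expect the main obstacle to be the rigorous control of $\mathfrak D[\zeta]$ into $H^{1/2}(\R^d)^d$ and the correct functional setting, namely the use of homogeneous Sobolev spaces $\dot H^1$ on the unbounded fluid domain $\Omega$ bounded by two non-flat graphs, for the Neumann problem; the coercivity and trace estimates underlying these steps are standard in the water-waves literature but must be invoked with some care.
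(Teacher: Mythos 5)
Your proof is correct and follows essentially the same route as the paper, which defers the details to \cite{Lannes2017} but states explicitly that the argument hinges on the identity $N\cdot\nabla_{X,z}\Phi_{\vert_{z=\zeta}}=-\nabla\cdot\big(\int_{-h_0+b}^\zeta V\big)$ — precisely the key identity you derive by vertical integration of the divergence-free condition. The remaining ingredients you supply (variational solution of the Neumann problem in $\dot H^1(\Omega)$ and the trace/Dirichlet--Neumann estimates for the $H^{1/2}$ mapping property) are the standard ones underlying the cited proof.
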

We refer to \cite{Lannes2017} for the proof, which relies on the key observation that
$$
N\cdot \nabla_{X,z}\Phi _{\vert_{z=\zeta}}=- \nabla\cdot \big( \int_{-h_0+b}^\zeta V \big).
$$
As a consequence of Proposition \ref{propclosed}, the last two terms in \eqref{formzetaQ3} are (non explicit, non local, non linear) functions of $\zeta$ and $Q$:
\begin{itemize}
\item Since $V^*=V-\ovV$ denotes the fluctuation of the horizontal velocity $V$ with respect to its vertical average $\ovV$,   of the horizontal velocity field. The tensor $\Rey=\int_{-h_0+b}^\zeta V^*\otimes V^*$ measures the contribution to the momentum equation of these fluctuations. It is therefore reminiscent of the Reynolds stress tensor in turbulence.
\item The non-hydrostatic pressure contains nonlinear but also linear terms; as we shall see, it contains in particular the linear dispersive effects that are important for a good description of wave propagation.
\end{itemize}
These terms are very complex, but it is possible to derive relatively simple asymptotic expansions in terms of $\zeta$ and $Q$ in some particular regimes. In deep water, asymptotic models can be derived for waves of small steepness (see for instance \cite{Matsuno1992,Matsuno1993,Choi1995,Craig2006,BonnetonLannes,Lannes2013}), but we shall focus throughout these notes on {\it shallow water} models.

\subsection{Nondimensionalization of the equations}\label{sectnondim}

In order to study the asymptotic behavior of the solutions to the water waves equations, it is convenient to introduce non-dimensionalized quantities based on the typical scales of the problem, namely: the typical depth $h_0$, the order of the surface variation $a_{\rm surf}$, the order of the bottom variations $a_{\rm bott}$ and the typical horizontal scale $L$. We can therefore form three dimensionless parameters
$$
\mu=\frac{h_0^2}{L^2}, \qquad \eps=\frac{a_{\rm surf}}{h_0},\qquad \beta=\frac{a_{\rm bott}}{h_0}.
$$
The first one is the {\it shallowness parameter}, the second the {\it amplitude parameter}, and the third the {\it topography parameter.} We are interested throughout this article in {\it shallow water configurations}, in the sense that $\mu$ is assumed to be small.
\begin{remark}
Another parameter, the {\it steepness} $\epsilon=\frac{a}{L}=\eps \sqrt{\mu}$ is also found in the literature, but its main relevance is in intermediate to deep water, and it will therefore not been used in these notes.
\end{remark}
Dimensionless quantities are defined as follows,
\begin{align*}
\widetilde{X}=\frac{X}{L}, \qquad \widetilde{z}=\frac{z}{h_0},\qquad \widetilde{t}=\frac{t}{L/\sqrt{gh_0}}, \\
 \widetilde{\zeta}=\frac{\zeta}{a_{\rm surf}}, \qquad \widetilde{b}=\frac{b}{a_{\rm bott}},
\qquad 
\widetilde{Q}=\frac{Q}{a_{\rm surf}\sqrt{gh_0}},\qquad \widetilde{w}=.\frac{w}{aL/h_0\sqrt{g/h_0}}.
\end{align*}
Plugging into \eqref{formzetaQ3} then yields the dimensionless form of the equations. Omitting the tildes for the sake of clarity, they read
\begin{equation}\label{formzetaQ3_ND}
\begin{cases}
\dt \zeta+\nabla\cdot Q=0,\\
\dt Q+\eps \nabla\cdot(\frac{1}{h}Q\otimes Q)+h\nabla \zeta+\eps \nabla\cdot\Rey+\frac{1}{\eps}\int_{-1}^{\eps \zeta} \nabla P_{\rm NH}=0,
\end{cases}
\end{equation}
where the dimensionless water height is $h=1+\eps\zeta-\beta b$ and the dimensionless "turbulent" tensor ${\bf R}$ and non-hydrostatic pressure are
\begin{equation}\label{defRPNH}
{\bf R}=\int_{-1+\beta b}^{\eps \zeta} V^*\otimes V^*
\quad\mbox{ and }\quad
\frac{1}{\eps}P_{\rm NH}=\int_z^{\eps \zeta} \big( \dt w +\eps V\cdot \nabla w +\frac{\eps}{\mu}w \dz w \big),
\end{equation}
with, in their dimensionless version, 
$$
\ovV=\frac{1}{h}\int_{-1+\beta b}^{\eps\zeta} V(t,X,z){\rm d} z
\quad \mbox{ and }\quad V^*(t,X,z)=V(t,X,z)-\ovV(t,X).
$$
The equations \eqref{formzetaQ3_ND} can equivalently be written in $(\zeta,\ovV)$ variables (recall that $Q=h\ovV$),
\begin{equation}\label{formzetaQ3_ND_V}
\begin{cases}
\dt \zeta+\nabla\cdot (h\ovV)=0,\\
\dt \ovV+\eps \ovV\cdot \nabla\ovV+\nabla \zeta+\eps \frac{1}{h}\nabla\cdot\Rey+\frac{1}{\eps h}\int_{-1}^{\eps \zeta} \nabla P_{\rm NH}=0.
\end{cases}
\end{equation}
\begin{remark}\label{remZCS_ND}
Similarly, one can derive a dimensionless version of the Zakharov-Craig-Sulem formulation \eqref{ZCS},
\begin{equation}\label{ZCS_ND}
\begin{cases}
\dsp \dt \zeta - \frac{1}{\mu}G_\mu[\eps\zeta,\beta b]\psi=0,\\
\dsp \dt \psi + \zeta +\eps \frac{1}{2}\abs{\nabla \psi}^2 -\frac{1}{2}\eps\mu\frac{(\frac{1}{\mu}G_\mu[\eps \zeta,\beta b]\psi+\eps \nabla\zeta\cdot \nabla\psi)^2}{1+\eps^2\abs{\nabla\zeta}^2}=0,
\end{cases}
\end{equation}
where 
$G_\mu[\eps\zeta,\beta b]\psi= \big( \dz\Phi -\eps \mu \nabla \zeta\cdot \nabla \Phi\big)_{\vert_{z=\eps\zeta}}$ and 
$$
\begin{cases}
(\dz^2+\mu \Delta)\Phi=0 &\mbox{for  } -1+\beta b < z < \eps \zeta \\
\Phi_{\vert_{z=\eps \zeta}}=\psi ,&
\big( \dz\Phi -\beta \mu \nabla b \cdot \nabla \Phi\big)_{\vert_{z=-1+\beta b}}=0.
\end{cases}
$$
Setting $\eps=\beta=0$, one gets the linearized water waves equations for a flat bottom. In this case, the equation for $\Phi$  can be explicitly solved and the Dirichlet-Neumann operator becomes a simple Fourier multiplier $G_\mu[0,0]=\sqrt{\mu}\abs{D}\tanh(\sqrt{\mu}\abs{D})$. In particular, the linear dispersion relation for the water waves equations is 
$$
\omega_{\rm WW}^2 = k^2 \frac{\tanh(\sqrt{\mu}k)}{\sqrt{\mu}k},
$$
where ${\bf k}$ is a wave number of a plane wave solution of the linearized equations, $k=\abs{{\bf k}}$ and $\omega_{\rm WW}$ the associated frequency.
\end{remark}

\section{The nonlinear shallow water equations and higher order approximation for irrotational flows}\label{sect2}

We derive and comment in this section several shallow water asymptotic models. 
In the dimensionless version of  the water waves equations \eqref{formzetaQ3_ND} there are the nonlocal "turbulent"  and non-hydrostatic components. These two terms involve the velocity and pressure fields inside the fluid domain and if one wants to study their asymptotic behavior in shallow water it is therefore necessary to describe the inner structure of the velocity and pressure fields; this is performed in \S \ref{sectinner} and \S \ref{sectNH} respectively. The first model obtained in the shallow water asymptotics is the nonlinear shallow water (NSW) system; it is derived in \S \ref{sectNSW} where its mathematical properties and several open problems are also reviewed. We then address in \S \ref{sectBoussinesq} the Boussinesq equations which furnish a second order approximation with respect to the shallowness parameter $\mu$, but under a smallness assumption on the amplitude of the waves (weak nonlinearity). Removing this smallness assumption, one obtains the more general but more complex Serre-Green-Naghdi equations (SGN), which are derived and commented in \S \ref{sectSGN}. In order to get a better resolution of the vertical structure of the flow, multi-layer extensions of these models have been recently proposed; we present them in \S \ref{sectmultilayer}. Another type of higher order model, described in \S \ref{sectIsobe}, is the Isobe-Kakinuma, derived from variational arguments. We then turn in \S \ref{sectscalar} to investigate one directional waves that are interesting because they can be described by a single scalar equation easier to analyze.

\subsection{The inner structure of the velocity field}\label{sectinner}

It is possible to describe the inner structure of the velocity field in shallow water by using the incompressibility and irrotationality conditions \eqref{Euler2} and \eqref{Euler3}, as well as the bottom boundary condition \eqref{Euler6}. In their dimensionless version, these conditions become
\begin{equation}\label{liste}
\begin{cases}
\mu\nabla\cdot V+\dz {w}=0,\\
\dz V-\nabla w=0,\\
\nabla^\perp \cdot V=0,\\
w_b-\beta \mu \nabla b\cdot V_b=0.
\end{cases}
\end{equation}
\begin{remark}
The irrotationality assumption made above can be relaxed to derive asymptotic models; as shown in Section \ref{sectrot} the contribution of the vorticity is only felt in the asymptotic models if the \emph{vortex strength} defined in \eqref{strength} is large enough.
\end{remark}
The first and last equations can be used to obtain
$$
w=-\mu\nabla\cdot \big[ (1+z-\beta b)\overline{V}\big]-\mu\nabla\cdot \int_{-1+\beta b}^z V^*.
$$
and with the second equation this yields
\begin{align*}
V^*&=- \Big(\int_z^{\eps\zeta} \nabla w\Big)^*\\
&=\mu \Big( \int_z^{\eps\zeta} \nabla\nabla\cdot \big[(1+z'-\beta b)\overline{V}\big]{\rm d}z'\Big)^*+\mu \Big(\int_{z}^{\eps\zeta}\nabla\nabla\cdot \int_{-1+\beta b}^z V^* \Big)^*.
\end{align*}
It is therefore natural to introduce the operators ${\bf T}[\eps\zeta,\beta b]$ and ${\bf T}^*[\eps\zeta,\beta b]$ acting on $\R^d$-valued functions and defined on the fluid domain $\Omega$ and defined as
\begin{equation}\label{defTT}
{\bf T}[\eps\zeta,\beta b]W=\int_z^{\eps\zeta}\nabla\nabla\cdot \int_{-1+\beta b}^{z'}W
\quad\mbox{ and }\quad
{\bf T}^*[\eps\zeta,\beta b]W=\big({\bf T}[\eps\zeta,\beta b]W\big)^*.
\end{equation}
The above expression for $V^*$ can then be written under the form
$$
(1-\mu {\bf T}^*)V^*=\mu {\bf T}^* \overline{V}
$$
so that 
\begin{align*}
V^*&=\mu {\bf T}^* \overline{V}+O(\mu^2).
\end{align*}
Since  $\ovV$ does not depend on $z$, the quantity ${\bf T}^* \overline{V}$ can be computed explicitly, leading to  a shallow water expansion of the inner velocity field in terms of $\zeta$ and $\overline{V}$. When the bottom is flat ($b=0$), this expansion reads
\begin{equation}\label{expinnervel}
\begin{cases}
V=\overline{V}- \frac{1}{2}\mu \big(  (1+z)^2-\frac{1}{3} h^2 \big) \nabla\nabla\cdot \overline{V}+O(\mu^2),\\
w=-\mu (1+z)\nabla\cdot \overline{V}+O(\mu^2);
\end{cases}
\end{equation}
for the sake of clarity, the generalization in the presence of topography is given in \eqref{expinnerveltopo} in Appendix \ref{Appbottom}.
\begin{remark}
For the derivation of the asymptotic models below, the first order approximation $V=\ovV+O(\mu)$ is enough, but the formula \eqref{expinnervel} shows that it is possible to reconstruct the vertical dependance of the velocity from the knowledge of $\ovV$. One could actually generalize the procedure used above to reconstruct the inner velocity field at order $O(\mu^N)$ for any $N$. This formula however would involve high order derivatives of $\ovV$ and could more importantly be completely irrelevant. For instance, if $\ovV$ and $\zeta$ are given through the resolution of the nonlinear shallow water derived below, then, as we shall see, $\ovV$ will only be known up to an error of size $O(\mu t)$; therefore, even the $O(\mu)$ corrector in the formula for $V$ in \eqref{expinnervel} is irrelevant since it is of the same order as the error made on $\ovV$.
\end{remark}

\subsection{The inner structure of the pressure field}\label{sectNH}

As already seen, the pressure field can be written as the sum of the hydrostatic pressure and a non-hydrostatic correction. In dimensionless variables, this reads
$$
P=(\eps \zeta-z)+P_{\rm NH}
\quad \mbox{ with }\quad
\frac{1}{\eps}P_{\rm NH}= \int_z^{\eps \zeta} \big( \dt w +\eps V\cdot \nabla w +\frac{\eps}{\mu}w \dz w \big).
$$
From the asymptotic expansion \eqref{expinnervel}, we deduce that, when the bottom is flat,
\begin{align}
\label{expP}
\frac{1}{\eps} P_{\rm NH}&=-\mu\big[ \frac{h^2}{2}-\frac{(1+z)^2}{2}\big]  \big(\dt +\eps \ovV\cdot \nabla -\eps\nabla\cdot \ovV\big)\nabla\cdot\ovV
+O(\mu^2);
\end{align}
we refer to \eqref{expPtopo} for the generalization of this formula when the bottom is not flat.

It follows that if one knows $\zeta$ and $\ovV$ (from experimental measurement or, approximately, by solving one of the asymptotic models derived below) then it is possible to reconstruct the pressure field in the fluid domain. An interesting problem for applications to coastal oceanography is the inverse problem: is it possible to reconstruct the surface elevation $\zeta$ by pressure measurements at the bottom (through pressure sensors lying on the sea bed). In the case of progressive waves (solitary or cnoidal waves), it is possible to do so (see for instance \cite{Oliveras2012,ClamondConstantin}) but the situation is more complex for general non progressive waves. Indeed, as many inverse problems, this reconstruction is an ill-posed problem (one roughly has to solve a Laplace equation in the fluid domain with no boundary condition at the surface and double Dirichlet and Neumann condition at the bottom). An heuristic formula was proposed in \cite{Vasan2017} and a weakly nonlinear reconstruction was derived in \cite{BonnetonLannes2017} (and experimentally validated with in situ measurements \cite{Bonneton2018,Mouragues2019}) using an additional argument of nonsecular growth to circumvent this ill-posedness.

\subsection{First order approximation: the nonlinear shallow water equations}\label{sectNSW}

The nonlinear shallow water equations are an approximation of order $O(\mu)$ of the water waves equations \eqref{formzetaQ3_ND} in the sense that terms of order $O(\mu)$ are dropped. The main point consists therefore in studying the dependence of the "turbulent" and non-hydrostatic terms on $\mu$.

From the results of \S \ref{sectinner} and \S \ref{sectNH}, and recalling the definition \eqref{defRPNH} of ${\bf R}$ and $P_{\rm NH}$, we easily get that
$$
\nabla\cdot \Rey=O(\mu^2) \quad \mbox{ and }\quad \frac{1}{\eps}\int_{-1+\beta b}^{\eps\zeta}\nabla P_{\rm NH}=O(\mu).
$$
Neglecting the $O(\mu)$ terms in the $(\zeta,Q)$ formulation of the water waves equations \eqref{formzetaQ3_ND}, one obtains the nonlinear shallow water equations (NSW),
\begin{equation}\label{eqNSW}
\begin{cases}
\dt \zeta+\nabla\cdot Q=0,\\
\dt Q+\eps \nabla\cdot (\frac{1}{h}Q\otimes Q)+h\nabla\zeta=0,
\end{cases}
\mbox{ for }t\geq 0,\quad x\in \R^d,
\end{equation}
with $h=1+\eps\zeta-\beta b$ (see \eqref{eqNSWop} below for an equivalent formulation in $(\zeta,\overline{V})$ variables). This is a hyperbolic system of equations that furnishes a quite rough but very robust approximation for shallow water waves. We review below several known results and open problems related to the NSW model.
\subsubsection{The initial value (or Cauchy) problem for strong solutions to the NSW equations}
The NSW equations \eqref{eqNSW} can be equivalently written in $(\zeta,\ovV)$ variables (recall that $Q=h\ovV$),
\begin{equation}\label{eqNSWop}
\begin{cases}
\dt \zeta+\nabla\cdot (h \overline{V})=0,\\
\dt \overline{V}+\eps\overline{V}\cdot \nabla \overline{V}+\nabla\zeta=0,
\end{cases}
\mbox{ for }t\geq 0,\quad x\in \R^d
\end{equation}
with $h=1+\eps\zeta-\beta b$ and with initial condition
$$
(\zeta,\overline{V})_{\vert_{t=0}}=(\zeta^{\rm in},\overline{V}^{\rm in}).
$$
There  is local conservation of energy for the NSW equations,
\begin{equation}\label{locNRJNSW}
\dt {\mathfrak e}_{\rm NSW}+ \nabla\cdot {\mathfrak F}_{\rm NSW}=0,
\end{equation}
with energy density and energy flux given by
$$
{\mathfrak e}_{\rm NSW}= \frac{1}{2} \big[ \zeta^2 + h \vert \overline{V}\vert^2 \big]
\quad \mbox{ and }\quad
{\mathfrak F}_{\rm NSW}= \big( \zeta+\eps \frac{1}{2} \abs{\overline{V}}^2 \big)h \ovV;
$$
in particular, this yields conservation of the mechanical energy,
$$
\frac{d}{dt}E_{\rm NSW}=0
\quad\mbox{ with }\quad
E_{\rm NSW}=\int_{\R^d}{\mathfrak e}_{\rm NSW}.
$$
Under the non vanishing depth condition,
\begin{equation}\label{nonvanish}
\exists h_{\rm min}>0,\qquad \sup_{X\in \R^d}h(t,X)\geq h_{\rm min},
\end{equation}
the conservation of $E_{\rm NSW}$ therefore furnishes a control of the $L^2$-norm of $(\zeta,\overline{V})$. The non-vanishing depth condition actually ensures that the NSW equations form a Friedrichs symmetrizable hyperbolic system. It follows therefore from the general theory of Friedrich symmetrizable hyperbolic systems (see for instance \cite{AG,Taylor3,BG}) that the initial value problem  is locally well posed for times of order $O(1/\eps)$ if the initial data $(\zeta^{\rm in},\ovV^{\rm in})$ belongs to $H^s(\R^d)^{1+d}$ with $s>1+d/2$ and satisfies the non vanishing depth condition \eqref{nonvanish}. Note that the $O(1/\eps)$ time scale for the life span of the solutions is optimal in dimension $d=1$ since shocks are known to develop at this time scale. Despite recent breakthroughs \cite{christodoulou2014,luk2018shock,buckmaster2019} (these references deal with the isentropic Euler equations which are related to  the NSW equations as explained below), the scenario for shock formation in dimension $d=2$ remains a difficult open problem. Finally, let us mention that if the non-vanishing depth condition  is relaxed, then the problem becomes a much more complex free boundary system of equations (see below). 

\subsubsection{Weak solutions}\label{sectweakNSW}
 In the case of a flat topography ($b=0$) the NSW equations coincide with the isentropic Euler equations for compressible gases, with $h$ playing the role of the density and with pressure law ${\mathcal P}(\rho)= \frac{1}{2} g \rho^2$, and it is therefore possible to use the construction of weak-entropy solutions following the dense literature on compressible gases, such as \cite{Diperna,Lions,Chen2012}; these solutions are obtained as the inviscid limit of viscous generalization of the NSW equations. We refer to \cite{Bresch2009} for a review on these topics. Uniqueness remains an open problem. \\ 
The situation for the two-dimensional case is even more complicated, and almost nothing is known. As stated by Lax \cite{Lax},
\begin{quote}
There is no theory for the initial value problem for
compressible flows in two space dimensions once shocks
show up, much less in three space dimensions. This is a
scientific scandal and a challenge.
\end{quote}
Fortunately,
\begin{quote}
Just because we cannot prove that compressible 
flows with
prescribed initial values exist doesn't mean that we cannot compute them.
\end{quote}
And indeed, shocks are computed for the NSW in many applications; in coastal oceanography for instance, shocks are relevant because they are used to describe wave breaking. The mathematical entropy coincides for the NSW equations with the energy; the dissipation of entropy associated to weak entropy solutions is therefore a dissipation of energy that corresponds with a pretty good accuracy to the energy actually dissipated by wave breaking \cite{Bonneton2010}. See also \S \ref{WB} below for more considerations on the modeling of wave breaking.\\
Let us finally mention briefly the case of a non flat bottom  ($b\neq 0$); the momentum equation is then given by
 $$
 \dt Q+ \nabla\cdot \big(\eps \frac{1}{h} Q\otimes Q+\frac{1}{2\eps }h^2 \mbox{Id} \big)=-\frac{\beta}{\eps } h \nabla b
 $$
which is no longer in conservative form due to the presence of the source term in the right-hand-side (inherited from a similar non conservative term in the full averaged Euler equations, see Remark \ref{remnoncons}). Even in dimension $d=1$, there is no fully satisfactory theory at this day to define weak solutions and products of shocks in this framework \cite{abgrall2010}; this is another theoretical and numerical challenge.

\subsubsection{Initial Boundary value problems.}\label{sectIBVPNSW}
 The equations \eqref{eqNSWop} are cast on $\R^d$ but the equations must sometimes be considered in a domain with a boundary. This boundary can be physical (e.g. a wall) or artificial: for instance, for numerical simulations, one has to consider a bounded domain whose boundary has no physical relevance. For the sake of clarity, let us discuss first the one-dimensional case $d=1$, on a finite interval $[0,L]$,
\begin{equation}\label{NSW1dop}
\begin{cases}
\dt \zeta +\dx (h \overline{v})=0,\\
\dt \overline{v}+\eps\overline{v} \dx \overline{v}+\dx\zeta=0,
\end{cases}
\mbox{ for }t\geq 0,\quad x\in (0,L)
\end{equation}
with $h=1+\eps\zeta-\beta b$ and with initial condition
$$
(\zeta,\overline{v})_{\vert_{t=0}}=(\zeta^{\rm in},\overline{v}^{\rm in}) \quad \mbox{ on }\quad [0,L].
$$
In addition, boundary conditions must be imposed at $x=0$ and $x=L$. Some examples of boundary conditions are
\begin{itemize}
\item Generating boundary conditions. The water elevation is known (from buoy measurements for instance) at the entrance of the domain and prescribed as a boundary data,
$$\zeta(t,0)=f(t);$$
in this case, the boundary $x=0$ is non physical.
\item Wall. There is a fixed wall located at $x=L$, on which the waves bounces back. In this case the boundary $x=L$ is physical and the corresponding boundary condition is
$$
\overline{v}(t,L)=0.
$$
\item  Transparent conditions. Such boundary conditions are very important for numerical simulations in the cases where there is no physical boundary condition at $x=L$ and one wants to impose a boundary condition that does not create any artificial reflexion.  In the particular case of the NSW in dimension $d=1$, a simple analysis of the Riemann invariants shows that such a condition is given by
$$
R_-(\zeta,\overline{v}):=2(\sqrt{h}-1)-\eps \overline{v}=0 \qquad \mbox{at } x=L,
$$
where $R_-$ is the left going Riemann invariant (see \S \ref{sectFNscal} below for more details).
\end{itemize}
Initial boundary value problems for hyperbolic systems have been considered quite intensively \cite{Majda1,Majda2,Majda3,Metivier2001,Metivier2012,Freistuhler1998,BG,Coulombel2003}; we refer to \cite{IguchiLannes} for a sharp general theory in dimension $d=1$ showing that such problems are locally well-posed in $H^m$ ($m\geq 2$) under suitable compatibility conditions. In the particular case of $2\times 2$ systems, an analysis based on Riemann invariants can also be performed \cite{Li1985}, and proves very useful for numerical implementation (see for instance \cite{Marche,LannesWeynans}). In dimension $d=2$, the "wall" boundary condition $\overline{V}\cdot {\bf n}=0$ can be deduced from classical works on the compressible Euler equations \cite{Schochet1986} but other types of boundary conditions are much more delicate and remain an open problem.

\subsubsection{A free boundary problem: the shoreline problem.} 
The non-vanishing depth condition \eqref{nonvanish} is of course a serious restriction for applications to coastal oceanography, where one typically has to deal with beaches. Let us consider the case for instance where the shoreline is at time $t$ the graph of some function $y\in \R \mapsto \underline{X}(t,y)$ if $d=2$ (and a single point $\underline{x}(t)$ if $d=1$) and that the sea is, say, on the right part of the shoreline (see Figure \ref{figshoreline}).
\begin{figure}
\begin{center}
\includegraphics[width=0.7\linewidth]{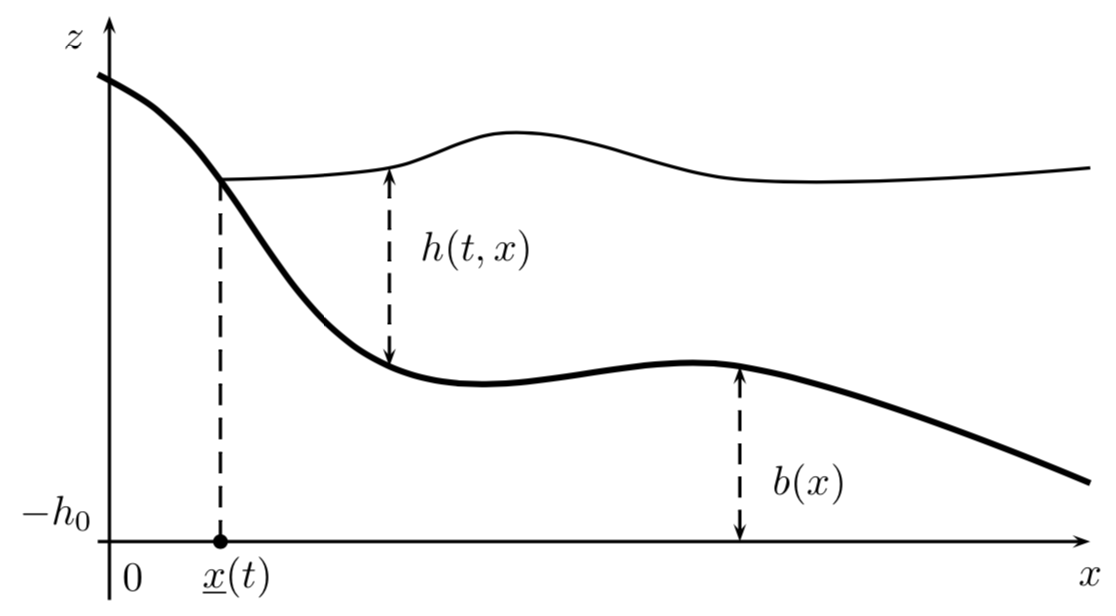}
\end{center}
\caption{The shoreline problem in dimension $d=1$}
\label{figshoreline}
\end{figure}

The initial value problem is then much more difficult since it is now a free boundary problem: one must solve the NSW equations on ${\mathcal O}_t=\{X=(x,y)\in \R^2, x>\underline{X}(t,y)\}$ (or ${\mathcal O}_t=\{x\in \R, x>\underline{x}(t)\}$ if $d=1$) whose boundary, the shoreline (or more accurately, its projection on the horizontal plane) evolves according to the kinematic equation
\begin{equation}\label{kinematic}
\dt \underline{X}=\overline{V}_{\vert_{x=\underline{X}(t,y)}}\cdot \left(\begin{array}{c} 1 \\ \partial_y\underline{X}\end{array}\right)
\qquad (\mbox{or }\underline{x}'(t)=u(t,\underline{x}(t)) \mbox{ if } d=1),
\end{equation}
which involves the trace at the boundary of the velocity. A reasonable assumption to solve this free boundary problem is to assume that the surface of the water is transverse to the bottom topography at the shoreline in the following sense
\begin{equation}\label{transverse}
\partial_\nu h <0 \mbox{ on } \{X=\underline{X}(t,y)\},
\end{equation}
where $\nu$ is the outwards unit normal to ${\mathcal O}_t$ (if $d=1$ this condition reduces to $\dx h (t,\underline{x}(t))>0$, i.e. the surface of the water is not tangent to the bottom at the contact point).
Proving that the shoreline problem is well-posed consists in proving that there exists a smooth enough family of mapping $t\mapsto \underline{X}(t,\cdot)$ (or simply $t\mapsto \underline{x}(t)$) on some time interval $[0,T]$ and a family of smooth enough functions $\zeta$ and $\overline{V}$ solving the nonlinear shallow water on ${\mathcal O}_t$ and the kinematic equation \eqref{kinematic}. In dimension $d=1$, such a result can be found in \cite{LannesMetivier} as a particular case of a more general result for the Green-Naghdi equations, but the dispersive terms of this latter make the analysis more complicated than necessary, and the proof could certainly be simplified considerably if one is only interested in the nonlinear shallow water equations. Let us also mention that the isentropic Euler equations for compressible gases with vacuum has been solved in \cite{JangMasmoudi1} and \cite{CoutandShkoller1} for $d=1$ and \cite{JangMasmoudi2} and \cite{CoutandShkoller2} for $d\geq 2$ under the assumption  of a physical boundary condition at the interface with vacuum (using the terminology of \cite{Liu}), namely,
$$
-\infty < \partial_\nu c^2 <0 \mbox{ at the inferface with vacuum},
$$
where $c=({\mathcal P}'(\rho))^{1/2}$ is the sound speed. Using the analogy mentioned in \S \ref{sectweakNSW}, the vacuum problem with physical boundary condition exactly coincides with the shoreline problem with transversality condition \eqref{transverse} in the case of a flat topography ($b=0$); an extension of the techniques of the above references to the case of a non-flat topography looks feasible and could be done to cover the two-dimensional case $d=2$. Let us also mention \cite{de2019priori} (and \cite{ming2017water,ming2018water} for a non zero surface tension) where the water waves equations are solved in the presence
of an emerging bottom. The derivation of the NSW equations from the water waves equations in this context is an open problem.

\subsection{Weakly nonlinear second order approximations: the Boussinesq equations}\label{sectBoussinesq}

Compared to the NSW equations, the Boussinesq equations have a better precision, namely, $O(\mu^2)$ instead of $O(\mu)$, but require an additional assumption of {\it weak nonlinearity} that can be formulated as a smallness condition on $\eps$,
\begin{equation}\label{weakNL}
 \mbox{Weak nonlinearity:}\qquad \eps=O(\mu).
\end{equation}
Traditionally (but not always as we shall see below for the Boussinesq-Peregrine model), an assumption on the smallness of the topography variations is also made,
\begin{equation}\label{weaktopo}
 \mbox{Small topography variations:}\qquad \beta=O(\mu).
\end{equation}
Under these two assumptions, terms of size $O(\eps\mu)$ and $O(\beta \mu)$ can be treated as $O(\mu^2)$ terms, and the results of \S \ref{sectinner} and \S \ref{sectNH} yield the following approximations on the turbulent and non-hydrostatic terms  ${\bf R}$ and $P_{\rm NH}$ defined in \eqref{defRPNH},
\begin{align*}
\nabla\cdot \Rey&=O(\mu^2)\\
\frac{1}{\eps}\int_{-1}^{\eps\zeta}\nabla P_{\rm NH}&=-\mu \frac{1}{3}\nabla\nabla\cdot \dt \overline{V} + O(\mu^2)\\
&=-\mu \frac{1}{3}\Delta \dt \overline{V} + O(\mu^2),
\end{align*}
the last identity stemming from the third equation in \eqref{liste} and \eqref{expinnervel}. Plugging these approximations into \eqref{formzetaQ3_ND_V} and dropping the $O(\mu^2)$ terms, one obtains the following Boussinesq equations
\begin{equation}\label{Boussinesq}
\begin{cases}
\dt \zeta +\nabla\cdot(h\overline{V})=0,\\
(1-\mu \frac{1}{3}\Delta)\dt \overline{V}+\eps(\overline{V}\cdot \nabla \overline{V})+\nabla\zeta=0.
\end{cases}
\end{equation}
\begin{remark}
The irrotationality assumption has been used to replace $(1-\mu \frac{1}{3}\nabla\nabla^{\rm T})\dt \ovV$ by the simpler term $(1-\mu \frac{1}{3}\Delta)\dt \ovV$. In the presence of vorticity, it is in general not possible to do so (see \S \ref{sectBoussinesqlarge} below).
\end{remark}

There is actually not a single  Boussinesq model, but a whole family. There are various reasons why many formally equivalent Boussinesq models have been derived, such as their mathematical structure (well-posedness, conservation of energy, integrability, solitary waves, etc.) or their physical properties. Among the latter, the linear dispersive properties of these models is a central question. The linear dispersion associated to \eqref{Boussinesq} is 
\begin{equation}\label{dispBouss}
\omega_{\rm B}^2=\frac{k^2}{1+\frac{1}{3}\mu k^2}
\end{equation}
where ${\bf k}$ is a wave number, $k=\abs{{\bf k}}$ and $\omega$ the associated frequency. This dispersion relation is as expected a $O(\mu^2)$ approximation of the linear dispersion relation of the full water waves equations (see Remark \ref{remZCS_ND}), 
$$
\omega_{\rm WW}^2=k^2 \frac{\tanh(\sqrt{\mu}k)}{\sqrt{\mu}k},
$$
but the two formulas differ significantly when $\sqrt{\mu}k$ is not very small (i.e. for shorter waves and/or larger depth). It is possible to derive Boussinesq models with better dispersive properties and that differ from \eqref{Boussinesq} by $O(\mu^2)$ terms, and therefore keep the same overall $O(\mu^2)$ precision. These new Boussinesq systems depend on several parameters. The first one can be introduced using the so-called BBM trick \cite{BBM} that is based on the observation that
\begin{align*}
\dt \ovV&=-\nabla\zeta +O(\mu),\\
&=\alpha \dt \ovV-(1-\alpha)\nabla\zeta +O(\mu),
\end{align*}
for any real number $\alpha$. This substitution can be made in the dispersive term in the second equation of \eqref{Boussinesq}, 
$$
-\mu\frac{1}{3}\Delta \dt \ovV=-\mu\alpha \frac{1}{3}\Delta\dt \zeta  +\mu \frac{1}{3}(1-\alpha)\Delta \nabla\zeta +O(\mu^2)
$$
and induces only a $O(\mu^2)$ modification of  \eqref{Boussinesq};  the resulting model therefore keeps the overall $O(\mu^2)$ precision of \eqref{Boussinesq}. Other parameters can be introduced, following an idea of Nwogu \cite{Nwogu1993}, by making a change of of unknown for the velocity. More precisely, we introduce the velocity $V_{\theta,\delta}$ by
\begin{equation}\label{Nwogu}
V_{\theta,\delta}=(1-\mu \theta \frac{1}{3}\Delta)^{-1}(1-\mu \delta \frac{1}{3}\Delta) \ovV
\end{equation}
(this new quantity $V_{\theta,\delta}$ is an approximation of the velocity field at some level line in the fluid domain, see for instance \cite{Lannes2013}). Finally, a fourth parameter $\lambda$ can be introduced by remarking that since we have $\dt \zeta=-\nabla\cdot \ovV_{\theta,\delta}+O(\mu)$ from the first equation, it is possible to add $-\mu \frac{\lambda}{3}(\Delta \dt \zeta -\Delta \nabla\cdot V_{\theta,\delta})$ to the first equation (this is a variant of the BBM trick used above). One finally obtains the so called $abcd$ Boussinesq systems \cite{BonaChenSaut1,BonaChenSaut2,BonaColinLannes},
\begin{equation}\label{abcd}
\begin{cases}
(1-\mu {\bf b}\Delta)\dt \zeta +\nabla\cdot (h V)+ \mu {\bf a}\Delta \nabla\cdot V=0,\\
(1-\mu {\bf d}\Delta)\dt V +\nabla \zeta+\eps (V\cdot \nabla) V +\mu {\bf c} \Delta \nabla \zeta=0,
\end{cases}
\end{equation}
where $h=1+\eps\zeta-\beta b$, $V$ stands for $V_{\theta,\delta}$ and
$$
{\bf a}=-\frac{\theta+\lambda}{3}, \quad {\bf b}=\frac{\delta+\lambda}{3}, \quad {\bf c}=- \frac{\alpha+\delta -1}{3}, \quad {\bf d}=\frac{\alpha+\theta}{3}
$$
(so that ${\bf a }+{\bf b}+{\bf c}+{\bf d}=\frac{1}{3}$). This family of approximations can be extended by changing the structure of the nonlinearity \cite{BonaColinLannes,Chazel}.
\begin{remark}
For the NSW equations, the $(\zeta,Q)$ formulation \eqref{eqNSW} and the $(\zeta,\ovV)$ formulation \eqref{eqNSW} are totally equivalent for smooth solutions, and this will also prove true for the Serre-Green-Naghdi equations. However, such an equivalence does not hold for the Boussinesq systems. We derived the $abcd$ family of Boussinesq systems \eqref{abcd} from the $(\zeta,\ovV)$ formulation \eqref{formzetaQ3_ND_V} of the water waves equation; the same procedure applied to the $(\zeta,Q)$ formulation \eqref{formzetaQ3_ND} leads to slightly different models; we refer to \cite{Filippini2015} for an analysis of the slight differences between these models.
\end{remark}

Let us conclude this small survey on Boussinesq systems by considering what happens if the assumption \eqref{weaktopo} of small topography variations is not made. Since $\beta$ must now be considered as a $O(1)$ rather than $O(\mu)$ quantity, the expansion given above  for the non-hydrostatic term must be revisited. We now get from \S \ref{sectinner} and \S \ref{sectNH} that
\begin{align*}
\frac{1}{h}\frac{1}{\eps}\int_{-1}^{\eps\zeta}\nabla P_{\rm NH}&=\mu {\mathcal T}_b \dt \ovV + O(\mu^2),
\end{align*}
where 
$$
{\mathcal T}_b V=-\frac{1}{3h_b}\nabla\cdot (h_b^3 \nabla\cdot V)+\frac{\beta}{2h_b}\big[\nabla(h_b^2 \nabla b \cdot V)-h_b^2 \nabla b \cdot \nabla\cdot V \big]+\beta^2 \nabla b \nabla b \cdot V
$$
(notice that $h_b{\mathcal T}_b$ is a positive symmetric second order elliptic operator). Plugging this approximation into \eqref{formzetaQ3_ND_V} and dropping the $O(\mu^2)$ terms, one obtains the  Boussinesq-Peregrine \cite{Peregrine67} system
\begin{equation}\label{Peregrine}
\begin{cases}
\dt \zeta +\nabla\cdot (h\ovV)=0,\\
(1+\mu {\mathcal T}_b)\dt \ovV+\nabla\zeta +\eps (\ovV\cdot \nabla )\ovV=0;
\end{cases}
\end{equation}
a generalization of the $abcd$ systems for large topography variations can be derived from \eqref{Peregrine} by adapting the above procedure (see \cite{Lannes2013}).

Let us now describe some of mathematical results and open problems dealing with the Boussinesq models derived in this section.

\subsubsection{The initial value problem for strong solutions}

The (hyperbolic) NSW equations \eqref{eqNSWop} are locally well posed in Sobolev spaces over a $O(1/\eps)$ time scale and this is sharp because shocks occur for such times. The Boussinesq systems being a dispersive perturbation of the NSW equations, one expects that solutions to locally well posed Boussinesq models should exist on a time scale which is at least $O(1/\eps)$. One may indeed expect dispersion to help, but methods based on dispersive estimates only yield an existence time of order $O(1/\sqrt{\eps})$ \cite{linares2012}. A convenient and easy option to reach the $O(1/\eps)$ time scale is to work with $abcd$ systems with a symmetrized nonlinearity \cite{BonaColinLannes,Chazel,Lannes2013}; this $O(1/\eps)$ time scale has finally beed proved for the original $abcd$ systems in a series of papers \cite{Ming2012,Saut2012,Burtea2016,Burtea2016b,Saut2017} for all the linearly well posed $abcd$ systems, except for the case ${\bf b}={\bf d}=0$ and ${\bf a}={\bf c}>0$ which remains open.

The above references (except \cite{Chazel}) deal with a flat topography but, as remarked in \cite{Saut2012}, it is not difficult to extend them to the case of a non flat topography satisfying the assumption \eqref{weaktopo} of small topography variations.  Proving existence over $O(1/\eps)$ times is much more difficult for Boussinesq models with large topography variations (i.e. without assumption \eqref{weaktopo}) such as the Boussinesq-Peregrine model \eqref{Peregrine}. Local well posedness for this system has been proved in \cite{duchene2016} for times $O(1/\max\{\eps,\beta\})$ but the time scale $O(1/\eps)$ has only been proved in \cite{benoit2017} for a variant of the Boussinesq-Peregrine model \eqref{Peregrine} tailored to allow the implementation of low Mach techniques developed in \cite{breschmetivier} for the lake equations.

There are surprisingly few results regarding global existence. This has been proved for the "standard" Boussinesq system \eqref{Boussinesq} in \cite{Schonbek,Amick}, where a weak solution is constructed using a parabolic regularization of the mass conservation equation,  mimicking the hyperbolic theory; the solution is then proved to be regular and unique. For the general $abcd$ systems \eqref{abcd} in dimension $d=1$, global well posedness has been proved in some specific cases using the particular structure of the equations, such as the Bona-Smith system (${\bf a}=-1/3$, ${\bf b}=0$, ${\bf c}=-1/3$, ${\bf d}=1/3$) \cite{bonasmith} and the Hamiltonian cases (${\bf b}={\bf d}>0$, ${\bf a} \leq 0$, ${\bf c}<0$) \cite{BonaChenSaut2}; for this latter system, the two-dimensional case has been treated in \cite{hu2009global}. When ${\bf b}={\bf d}<0$ refined scattering results in the energy space have also been proved \cite{kwak2019,kwak2019b}.

\subsubsection{Initial boundary value problems}

The problem of initial boundary value problems is extremely important for applications to coastal oceanography and several numerical solutions have been proposed, such as the source function method \cite{WKS} for instance; these methods however are not fully satisfactory and require a significant increase of computational time.

 In contrast with hyperbolic systems of equations for which the initial boundary value problem has been intensively studied, there is almost no theoretical result if a dispersive perturbation is added to the equations, as this is the case for the Boussinesq equations. There are only some results concerning the one dimensional case, particular examples of the $abcd$ family \eqref{abcd} and/or  specific boundary conditions:  homogeneous boundary conditions as in \cite{Xue,Adamy,dougalis2009,dougalis2009bis,dougalis2010}, or  \cite{BonaChen,Antonopoulos} for the Bona-Smith system,  where the regularizing dispersive terms of the first equation (due to the fact that ${\bf b}>0$) plays a central role. In \cite{LannesWeynans}, {\it generating} boundary conditions (see  \S \ref{sectIBVPNSW}) have been considered for the Boussinesq-Abott system, a dispersive perturbation of the NSW equations written in $(\zeta,q)$ variables  \eqref{eqNSW}. This latter reference is based on the concept of dispersive boundary layer introduced in \cite{bresch2019waves} for the analysis of a wave-structure interaction problem; it provides a local well-posedness of the initial boundary value problem. However, as the other local well-posedness results given in the above references, the existence time thus obtained is far from the $O(1/\eps)$ time scale which, as seen above, is the relevant one. Reaching such a time-scale is considerably more difficult and requires a precise analysis of the dispersive boundary layer; to this day such an analysis has only been performed in \cite{bresch2019waves}.
 
Another relevant issue is the convergence towards the initial boundary value problem for the NSW equations as the dispersive (or shallowness) parameter $\mu$ tends to zero; here again, the analysis of the dispersive boundary layer should be a key point (such a convergence has been proved in \cite{bresch2019waves}).

For  {\it transparent} boundary conditions  (which allow waves to cross the boundary of the computational domain without reflexion, see  \S \ref{sectIBVPNSW}), the situation looks even more complicated. There are some results for the linear problem: for scalar equations (linear KdV or BBM for instance) \cite{BMN,BNS} and for the linearization of \eqref{Boussinesq} around the rest state \cite{KazakovaNoble}. The nonlinear case remains open.

\subsection{Second order approximation: the Serre-Green-Naghdi equations and variants}\label{sectSGN}

The Serre-Green-Naghdi (SGN) equations are an approximation of order $O(\mu^2)$ of the water waves equations \eqref{formzetaQ3_ND} in the sense that the terms of order $O(\mu)$ that were neglected in the nonlinear shallow water equations are kept, and only terms of order $O(\mu^2)$ are dropped.  The precision of this model is therefore the same as the precision of the Boussinesq models investigated in \S \ref{sectBoussinesq}, but they have a wider range of application since they do not require the weak nonlinearity assumption \eqref{weakNL} nor the weak topography assumption \eqref{weaktopo}. The price to pay is that the $O(\eps\mu)$ and $O(\beta\mu)$ terms must be kept in the model, making it more complicated than  the Boussinesq systems \eqref{abcd}. {\it For the sake of clarity, we consider here the case of a flat bottom only ($b=0$) and refer to Appendix \ref{Appbottom} for the equations with a non flat topography.}

The  "turbulent" and non-hydrostatic terms in \eqref{formzetaQ3_ND} can be expended as follows, following the results of \S \ref{sectinner} and \S \ref{sectNH}, 
\begin{align*}
\nabla\cdot \Rey&=O(\mu^2) \\
 \frac{1}{\eps}\int_{-1}^{\eps\zeta}\nabla P_{\rm NH}&=
 \mu h  {\mathcal T}\big[\dt \ovV+\eps \nabla\cdot \big(h  \ovV\otimes \ovV\big)\big] +\mu \eps h{\mathcal Q}_1\big(\zeta, \ovV\big)
 +O(\mu^2)
\end{align*}
where
\begin{align*}
{\mathcal T}V&=-\frac{1}{3h}\nabla\big( h^3 \nabla\cdot V\big),\\
{\mathcal Q_1}(\zeta,V)&=\frac{2}{3h}\nabla \big[ h^3 \big( \dx V\cdot \dy V^\perp +(\nabla\cdot V)^2 \big)\big].
\end{align*}
Therefore, even in a fully nonlinear regime and with the higher $O(\mu^2)$ precision of the SGN equations, the contribution of the "turbulent" term $\eps \nabla\cdot \Rey$ remains too small to be relevant and can be neglected. All the additional terms of the SGN equations with respect to the NSW equation are therefore due to the non hydrostatic pressure. Plugging the above expansions into \eqref{formzetaQ3_ND} and dropping the $O(\mu^2)$ terms, one obtains the SGN equations,
\begin{equation}\label{SGN}
\begin{cases}
\dt \zeta +\nabla\cdot Q=0,\\
(1+\mu {\bf T})\big[ \dt Q +\eps \nabla\cdot \big( \frac{1}{h} Q\otimes Q \big) \big] +h\nabla\zeta +\eps \mu h {\mathcal Q}_1 (h ,\frac{Q}{h})=0,
\end{cases}
\end{equation}
where ${\bf T}=h {\mathcal T}\frac{1}{h}$. We refer to \eqref{SGNtopo} for the generalization of these equations when the topography is not flat. These equations are actually known under different names, such as  Serre \cite{Serre,SuGardner}, Green-Naghdi \cite{GreenNaghdi,Kim}, 
or fully nonlinear Boussinesq \cite{Wei}.
\begin{remark}
Replacing $Q=h\ovV$ in \eqref{SGN}, one obtains the following equivalent formulation (as far as smooth solutions are concerned) in $(\zeta,\ovV)$,
\begin{equation}\label{SGN_V}
\begin{cases}
\dt \zeta +\nabla\cdot (h\ovV)=0,\\
(1+\mu {\mathcal T})\big[ \dt \ovV +\eps \nabla\cdot \big( {h} \ovV\otimes \ovV \big) \big] +\nabla\zeta + \eps \mu {\mathcal Q}_1 (h ,\ovV)=0.
\end{cases}
\end{equation}
Neglecting the $O(\eps\mu)$ terms in this system, one recovers of course the Boussinesq equations \eqref{Boussinesq}.
\end{remark}

As in the weakly nonlinear regime with the Boussinesq equations, it is possible to derive formally equivalent systems using similar procedures (the "BBM-trick" and a change of unknown for the velocity); a family of SGN equations generalizing the $abcd$ Boussinesq systems \eqref{abcd} can be derived \cite{Chazel2011,Lannes2013}. In a similar vein, it is possible to derive equivalent systems (i.e. systems that differ formally from \eqref{SGN} by $O(\mu^2)$ terms) that have a better mathematical structure \cite{Israwi2010b,Israwi2011} or that are more adapted to numerical computations \cite{LannesMarche}.

\subsubsection{Known results and open problems}

We review here several known results and open problems about the SGN equations.

\noindent
- {\it Initial value problems and singularity formation.} There  is local conservation of energy for the SGN equations,
\begin{equation}\label{locNRJSGN}
\dt {\mathfrak e}_{\rm SGN}+ \nabla\cdot {\mathfrak F}_{\rm SGN}=0,
\end{equation}
with energy density and energy flux given (when the bottom is flat, see \cite{CastroLannes2} for the generalization to non flat bottoms) by
\begin{align}
\label{SGNdens}
{\mathfrak e}_{\rm SGN}&= \frac{1}{2} \big[ \zeta^2 + h \vert \overline{V}\vert^2  + \mu \frac{1}{6} h^3 \abs{\nabla\cdot \ovV}\big],\\
\label{SGNflux}
{\mathfrak F}_{\rm SGN}&= \big[ \zeta  +\frac{1}{2} \vert \overline{V}\vert^2  + \eps \mu \frac{1}{6} h^2 \abs{\nabla\cdot \ovV}^2-\mu\frac{1}{3} h (\dt +\eps \ovV\cdot \nabla)(h\nabla\cdot \ovV)\big] h\ovV
\end{align}
Integrating over $\R^d$, this yields conservation of the mechanical energy,
$$
\frac{d}{dt}E_{\rm SGN}=0
\quad\mbox{ with }\quad
E_{\rm SGN}=\int_{\R^d}{\mathfrak e}_{\rm SGN}
$$
In addition to the control of the $L^2$-norm of $(\zeta,\overline{V})$ that we had for the NSW equations, we now have a control of $\sqrt{\mu}\nabla\cdot \overline{V}$ provided that the non-vanishing conditions \eqref{nonvanish} is satisfied. This allows one to control the extra nonlinear terms $\eps\mu{\mathcal Q}_1(h,\ovV)$ in \eqref{SGN_V} which has therefore a semi-linear structure. Local existence was proved in \cite{Li} for small times, and in \cite{Alvarez2,Israwi2011,fujiwara2015,duchene2016} for times of order $O(1/\eps)$, uniformly with respect to $\mu \in (0,1)$. Another interesting fact shown in \cite{Duchene} is that smooth solutions to the SGN equations can be obtained as relaxation limits of an augmented quasilinear system of conservation laws proposed in \cite{favrie2017}.

Contrary to the NSW equations, the SGN equations contain third order dispersive term that play a regularizing role. The question of global well posedness therefore becomes relevant, and one could conjecture in dimension $d=1$ a scenario similar to the one observed for the Camassa-Holm equation which is somehow the "unidirectional version" of the SGN equations (see below), namely: one has global existence for some data and wave breaking for others (i.e., the $L^\infty$-norm is bounded but the derivative of the velocity and/or the surface elevation blows up in finite time). This scenario is supported by numerical computations showing that there exist shocks relating a constant state to a periodic wave train, and that, at least numerically, such shocks can be dynamically obtained  \cite{GavrilyukNkonga}.

\medbreak

\noindent
- {\it Initial boundary value problems.} With respect to the NSW equations, the new dispersive and nonlinear terms of the SGN equation render the analysis much more complicated in the presence of a boundary. The case of a wall boundary condition $\overline{V}\cdot {\bf n}=0$ is the simplest one since the boundary terms in the energy estimates vanish. In the particular one dimensional case $d=1$, the result can be adapted from \cite{LannesMetivier} but considerable simplifications could be made using the non-vanishing depth condition \eqref{nonvanish}. Even in dimension $d=1$, other types of boundary conditions (e.g. generating and transparent) are much more complex and remain open. The case of transparent boundary conditions for the linearized SGN equations around the rest state (which are actually the same as the linearized Boussinesq equations around the rest state) has been addressed in \cite{KazakovaNoble}.

In view of the difficulty of the nonlinear case, an alternative has been proposed, consisting in  implementing a perfectly matched layer (PML) approach for a hyperbolic relaxation of the Green-Naghdi equations \cite{Kazakova}. This approach can also be used to deal with generating boundary conditions  but the size of the layer in which the PML approach is implemented is typically of two wavelength, which for applications to coastal oceanography can typically represent an increase of $100\%$ of the computational domain. Other methods such as the source function method \cite{WKS} also require a significant increase of computational time.
\medbreak
\noindent
- {\it Free boundary problems: the shoreline problem.}  As for the NSW equation, it is natural to remove the non-vanishing depth condition \eqref{nonvanish} and to consider the shoreline problem (see above). This problem has been solved in dimension $d=1$ in \cite{LannesMetivier}, but the two dimensional case remains open.

\subsection{Multi-layer hydrostatic and non-hydrostatic models}\label{sectmultilayer}

We have already mentioned the interest in coastal application for Boussinesq or SGN models with an improved linear dispersion. With this goal in mind, higher order Boussinesq and SGN models that are precise up to $O(\mu^{k})$ ($k\geq 3$) terms have been proposed (see for instance \cite{BonaChenSaut1,matsuno}); such models however contain high order derivatives that make them difficult to implement numerically. A more recent alternative to these models, initiated in \cite{casulli} (see also \cite{Stelling:2003fk,Ma,audusse2011} and references below),  are the so called non hydrostatic models that resolve the vertical flow structure in the governing equations. The key step in this approach is a vertical discretization of the non-hydrostatic pressure. As remarked for instance  in \cite{bai2013dispersion}, it turns out that this numerical approach can be interpreted through the decomposition of the fluid domain in $N$ artificial layers of fluid. We propose in this section a systematic derivation of multi-layer NSW, Boussinesq and SGN type models and make the link with various multi-layer models that can be found in the literature.

We first derive in \S \ref{sectEulerav} a multi-layer averaged Euler system deduced from the original Euler equation through vertical averaging on $N$ different horizontal layers of fluid. As in the single layer case, it is necessary to analyze the structure of the velocity and pressure fields in each layer in order to derive simpler asymptotic models; this structure is investigated in \S \ref{procinnerlayer}. At first order, a multi-layer hydrostatic (or NSW) model is derived in \S \ref{sectmultiNSW}; similar multi-layer generalizations of the Boussinesq and SGN models are then derived in \S \ref{sectmultiBouss} and \S \ref{sectmultiSGN}. 
\medbreak

\noindent
{\bf N.B.} {\it Throughout this section we directly work with the dimensionless variables introduced in \S \ref{sectnondim}.}

\subsubsection{Multi-layer averaging of the Euler equations}\label{sectEulerav}

The fluid domain is decomposed into $N$ horizontal layers ${\mathcal L}_j$, $1\leq j \leq N$,
with
$$
{\mathcal L}_j=\{ (X,z)\in \R^{d+1},  z_{j-1/2}(t,X)<z< z_{j+1/2}(t,X)\},
$$
where the functions $z_{j+1/2}$, $0\leq j\leq N$ are the boundaries of these layers. One has $z_{1/2}=-1+\beta b$ and $z_{N+1/2}=\eps \zeta$, but otherwise these functions have no physical meaning, and can be chosen in several ways (the interior interfaces can be time independent or related to the evolution of the free surface for instance). As in \cite{fernandez2014}, we take them of the form
$$
z_{j+1/2}=z_{1/2}+(\sum_{k=1}^{j} l_k) h, \quad\mbox{ with } l_1,\dots,l_N\in [0,1] \quad \mbox{ and }\quad \sum_{k=1}^N l_k=1,
$$
where we recall that $h=1+\eps\zeta-\beta b$. We denote by $V_j$ and $w_j$ the horizontal and vertical velocities in  ${\mathcal L}_j$ and
$$
h_j=z_{j+1/2}-z_{j-1/2}(=l_j h), \qquad  \ovV_j =\frac{1}{h_j}\int_{z_{j-1+2}}^{z_{j+1/2}} V, \quad \mbox{ and }\quad
V_j^*=V_j-\ovV_j;
$$
we also denote by $V_j^+$ and $w_j^+$ (resp. $V_j^-$ and $w_j^-$) the traces of $V_j$ and $w_j$ on the upper boundary $\{z=z_{j+1/2}\}$ of ${\mathcal L}_j$ (resp. its lower boundary $\{z=z_{j-1/2}\}$).

If the velocity field $\bU$ is irrotational and incompressible in the whole fluid domain $\Omega_t$, then the $\ovV_j$ are not independent variables and their evolution is slaved to the evolution of $\ovV$. For instance, in the case of a flat surface and a flat bottom one readily computes $\ovV_j=m_l(D)\ovV$, where $m_l(D)$ is the Fourier multiplier of symbol 
$$
m_l(\xi)=\frac{\sinh(\sqrt{\mu}(z_{j+1/2}+1)\vert\xi\vert)-\sinh(\sqrt{\mu}(z_{j-1/2}+1)\vert\xi\vert)}{\sinh(\sqrt{\mu}\vert\xi\vert)}.
$$
It is therefore possible to write the equations in terms of $\zeta$ and $\ovV_{j_0}$ (for some $1\leq j_0\leq N$) instead of $\zeta$ and $\ovV$. The corresponding Boussinesq models may be of interest because of their dispersive properties \cite{lynett2004linear,floarticle}.

The idea behind multi-layer models is different. Such models must be seen rather as a discretization of the $d+1$-Euler equations. This discretization leads to a piecewise approximation of the velocity field built on the layer averaged velocities $\ovV_j$. These quantities are then set to evolve independently according to an approximation of the layer averaged Euler equations derived below.

This piecewise approximation of the velocity field is assumed to be incompressible in the whole fluid domain, and irrotational in each layer. The incompressibility condition  imposes  continuity of the normal velocity at the interfaces, namely, in dimensionless variables,
\begin{equation}\label{contnorm}
\forall 1\leq j\leq N-1, \qquad w_{j+1}^--\mu V_{j+1}^-\cdot  \nabla z_{j+1/2}=w_{j}^+-\mu V_{j}^+\cdot  \nabla z_{j+1/2}.
\end{equation}

Let us now proceed to derive evolution equations on $\zeta$ and $(\ovV_j)_{1\leq j\leq N}$. We note first that the equation for the conservation of mass can be equivalently written 
$$
\dt \zeta+\nabla\cdot \big( \sum_{j=1}^N (l_j h \ovV_j)=0.
$$
Mimicking the computations performed in the single layer case, an equation on $\ovV_j$ can also be obtained by averaging the horizontal component of the Euler equation \eqref{Euler1} in ${\mathcal L}_j$, one gets, in dimensionless form,
\begin{align*}
\dt (h_j \ovV_j)+\eps \nabla\cdot (h_j \ovV_j\otimes \ovV_j)+ h_j \nabla \zeta+\eps \nabla\cdot {\bf R}_j&+\frac{1}{\eps}\int_{z_{j-1/2}}^{z_{j+1/2}} \nabla P_{\rm NH}\\
&=
\eps \big( V_j^+ G_{j+1/2} -V_j^- G_{j-1/2} \big) ,
\end{align*}
where $G_{j+1/2}$ denotes the mass transfer from $z_{j-1/2}$ to $z_{j+1/2}$,
\begin{align}
\nonumber
G_{j+1/2}&=\frac{1}{\eps }\dt z_{j+1/2}-\frac{1}{\mu} \big( w_{j+1}^--\mu V_{j+1}^-\cdot  \nabla z_{j+1/2}\big)\\
\label{defGj}
&=\frac{1}{\eps }\dt z_{j+1/2}-\frac{1}{\mu} \big( w_{j}^+-\mu V_{j}^+\cdot  \nabla z_{j+1/2}\big)
\end{align}
(in particular, as there is no mass transfer across the free surface and the bottom, one has $G_{1/2}=G_{N+1/2}=0$). In this equation, the non hydrostatic pressure $P_{\rm NH}$ is defined as in \eqref{defRPNH}, while the "turbulent" tensor in the layer ${\mathcal L}_j$ is defined as 
$$
{\bf R}_j=\int_{z_{j-1/2}}^{z_{j+1/2}} V_j^*\otimes V_j^*.
$$
Remarking further that
$$
\dt (h_j \ovV_j)+\eps \nabla\cdot (h_j \ovV_j\otimes \ovV_j)=h_j\dt \ovV_j +\eps h_j \ovV_j \cdot \nabla \ovV_j+\eps (G_{j+1/2}-G_{j-1/2})\ovV_j
$$
we have equivalently
\begin{align*}
l_j \dt  \ovV_j+\eps l_j \ovV_j\nabla\cdot \ovV_j +  l_j \nabla \zeta+&\frac{\eps }{h}\nabla\cdot {\bf R}_j+\frac{1}{\eps h}\int_{z_{j-1/2}}^{z_{j+1/2}} \nabla P_{\rm NH}
= {\mathbf S}_j
\end{align*}
with the source term ${\mathbf S}_j$ given by
\begin{equation}\label{defSj}
{\mathbf S}_j=-\eps \frac{1}{h}\big[ G_{j+1/2}(\ovV_j-V_j^+) -G_{j-1/2} (\ovV_j-V_j^-) \big].
\end{equation}

We have therefore derived a system of $(N+1)$ equations generalizing the equations \eqref{formzetaQ3_ND_V} derived in the case of a single layer,
\begin{equation}\label{layerav}
\begin{cases}
\dt \zeta+\nabla\cdot \big( \sum_{j=1}^N (l_j h \ovV_j)=0,\\
l_j \dt  \ovV_j+\eps l_j \ovV_j\nabla\cdot \ovV_j +  l_j \nabla \zeta+\frac{\eps }{h}\nabla\cdot {\bf R}_j+\frac{1}{\eps h}\int_{z_{j-1/2}}^{z_{j+1/2}} \nabla P_{\rm NH}
= {\mathbf S}_j.
\end{cases}
\end{equation}
\begin{remark}
{\bf i}. The assumption made above that the fluid is irrotational in each layer has not been used to derive \eqref{layerav}. It would be necessary however to establish a generalization of Proposition \ref{propclosed} showing that \eqref{layerav} is a closed system of equations, namely: given $\zeta$ and $(\ovV_j)_{1\leq j\leq N}$, there is a unique velocity field ${\bf U}$, incompressible in the whole fluid domain (and therefore with continuous normal velocity across the interfaces) such that for all $1\leq j\leq N$, ${\bf U}_j$ is irrotational in ${\mathcal L}_j$ and such that $\int_{z_{j-1/2}}^{z_{j+1/2}} V_j=\ovV_j$.\\
{\bf ii.} Multi-layer models relies on an incompressible approximation of the velocity field which is irrotational in each layer. It is however not irrotational in the whole fluid domain. Indeed, due to the possible discontinuities of the tangential velocity field, there is vorticity concentrated on the interfaces $\{z=z_{j+1/2}\}$. These "vortex sheets" must be understood as a discretization error and have no physical meaning (for physical vortex sheets for instance, there cannot be exchange terms across the interfaces \cite{Bardos}).\\
{\bf iii.} The assumption of irrotationality in each layer is actually not necessary; it is just required that the vorticity is small enough for its influence (e.g. the turbulent tensor) to be neglected. This assumption is (at least implicitly) made in the derivation of all multi-layer models that can be found in the literature. Of course a multi-layer generalization of the rotational models derived in Section \ref{sectrot} below is certainly possible. 
\end{remark}
As in the case of a single layer, we need to investigate the structure of the velocity field and of the non-hydrostatic pressure to derive simpler asymptotic models from this set of equations.

\subsubsection{The structure of the velocity and pressure fields}\label{procinnerlayer}

The assumption \eqref{contnorm} on the continuity of the normal velocities at the interfaces ensures the global incompressibility of the velocity field. Hence, as in the single layer case, we obtain that
$$
w=-\mu \nabla\cdot \int_{-1+\beta b}^z V,
$$
so that, in the layer ${\mathcal L}_j$, we have
$$
w_j=-\mu \nabla\cdot \big( (z-z_{j-1/2}) \ovV_j\big) - \mu \nabla\cdot \big(\sum_{k=1}^{j-1} h_k \ovV_k \big)- \mu \nabla\cdot \int_{z_{j-1/2}}^z V_j^*,
$$
where we recall that $\ovV_j$ is the vertical average of $V_j$ over ${\mathcal L}_j$ and $V_j^*=V_j-\ovV_j$. Since the velocity field is irrotational in each fluid layer, we deduce that
$$
V_j^*=-\Big( \int_{z_{j-1/2}}^z \nabla w_j \Big)^*.
$$
Proceeding as in \S \ref{sectinner} for the single layer case and defining the operators ${\bf T}_j$ and ${\bf T}_j^*$ acting  on $\R^d$ valued functions $W_j$ defined on ${\mathcal L}_j$ as
$$
{\bf T}_j W_j =\int_z^{z_{j+1/2}}\nabla\nabla\cdot \int_{z_{j-1/2}}^{z'} W_j
\quad \mbox{ and }\quad
{\bf T}^*_j W_j=\big({\bf T}_j W_j  \big)^*,
$$
we get that
\begin{equation}
\label{expvellayer}
\begin{cases}
\dsp V_j=\ovV_j+\mu {\bf T}_j^* \ovV_j+ \mu \big( \frac{1}{2}(z_{j+1/2}+z_{j-1/2})-z\big)\nabla\nabla\cdot \sum_{k=1}^{j-1}h_k\ovV_k+O(\mu^2),\\
\dsp w_j=-\mu \nabla\cdot \big( (z-z_{j-1/2}) \ovV_j\big) - \mu \nabla\cdot \big(\sum_{k=1}^{j-1} h_k \ovV_k \big)+O(\mu^2)
\end{cases}
\end{equation}
(since $\ovV_j$ does not depend on $z$, it is possible to derive an explicit expression for ${\bf T}_j^* \ovV_j$; it is obtained by replacing $\eps\zeta$ by $z_{j+1/2}$ and $\beta b$ by ${z_{j-1/2}}$ in \eqref{expinnerveltopo}).

To describe the multi-layer structure of the pressure field, it is convenient to define first the values $P_{\rm NH}^{(j+1/2)}$ of the non-hydrostatic pressure at the interfaces; taking into account the definition \eqref{defRPNH} of the non-hydrostatic pressure, these quantities can be determined through a downward induction,
$$
P_{\rm NH}^{N+1/2}=0, \qquad P_{\rm NH}^{j-1/2}=P_{\rm NH}^{j+1/2}
+\eps \int_{z_j-1/2}^{z_{j+1/2}} \big( \dt w_j+\eps V_j\cdot \nabla w_j +\frac{\eps}{\mu} w_j \dz w_j \big) ,
$$
for $j$ varying from $N$ to $1$. We then have, if $z$ is in the layer ${\mathcal L}_j$,
$$
\frac{1}{\eps} P_{\rm NH}(z)=\int_z^{z_{j+1/2}}  \big( \dt w_j+\eps V_j\cdot \nabla w_j +\frac{\eps}{\mu} w_j \dz w_j \big) +\frac{1}{\eps}P_{\rm NH}^{(j+1/2)}.
$$
Replacing in these formulas $V_j$ by $\ovV_j$ and $w_j$ by its approximation provided by \eqref{expvellayer} provides an expression for the non-hydrostatic pressure in terms of $h$ and $(\ovV)_j$, in the spirit of \eqref{expP} and \eqref{expPtopo}. The resulting expressions are quite complicated and omitted here.

\subsubsection{Multi-layer hydrostatic models}\label{sectmultiNSW}

Proceeding as in \eqref{sectinner}, one straightforwardly gets from \eqref{expvellayer} that
$$
V_j=\ovv_j+O(\mu), \qquad V_j^*=O(\mu), \qquad \ovV_j-V_j^\pm=O(\mu),\qquad w_j=O(\mu),
$$
from which one deduces that
\begin{equation}\label{sizetermslayer}
{\bf R}_j=O(\mu^2) ,\qquad \frac{1}{\eps h}\int_{z_{j-1/2}}^{z_{j+1/2}} \nabla P_{\rm NH}=O(\mu) \quad\mbox{ and }\quad {\mathbf S}_j=O(\eps\mu).
\end{equation}
Neglecting the $O(\mu)$ terms in \eqref{layerav} as for the derivation of the single layer NSW equations \eqref{eqNSW} we obtain the (hyperbolic) multi-layer NSW equations
\begin{equation}\label{multilayerNSW}
\begin{cases}
\dt \zeta +\nabla\cdot \big( \sum_{j=1}^N  l_j h \ovV_j )=0,\\
l_j \dt  \ovV_j+\eps l_j \ovV_j\cdot \nabla  \ovV_j+  l_j \nabla \zeta=0;
\end{cases}
\end{equation}
note in particular that the equations on $\ovV_j$ and $\ovV_k$ ($k\neq j$) are decoupled (or more accuretaly, they are only coupled through the presence of the term in $l_j\nabla \zeta$). This is because, at the precision of the model, $\ovV_j=V_j^\pm$ so that the source term ${\mathbf S}_j$ can be neglected together with the turbulent and non hydrostatic terms. This would not be the case if a different choice had been made for $V_j^\pm$. Other possibilities can for instance been found in \cite{audusse2011} where $V_j^+$ is taken equal to either $\ovV_{j+1}$ or $\ovV_j$ according to an upwind scheme based on the sign of $G_{i+1/2}$; in \cite{fernandez2014}, where a model with viscosity is considered, an analysis of the viscous terms leads to $V_j^+=\frac{1}{2}(\ovV_j+\ovV_{j+1})$.

\subsubsection{Multi-layer non hydrostatic models in the Boussinesq regime}\label{sectmultiBouss}

A multi-layer Boussinesq-type model can also been derived from \eqref{layerav} under the weak nonlinearity and small topography assumptions \eqref{weakNL} and \eqref{weaktopo}. Keeping the $O(\mu)$ terms but neglecting as in \S \ref{sectBoussinesq} the terms of size $O(\mu^2)$ (and therefore also $O(\eps\mu)$ terms under the weak nonlinearity assumption), one readily gets from \eqref{sizetermslayer} that the turbulent and exchange terms ${\bf R}_j$ and ${\bf S}_j$ can still be neglected, but that there is a contribution from the non-hydrostatic pressure
\begin{equation}\label{PNHBouss}
\frac{1}{\eps h} \int_{z_{j-1/2}}^{z_{j+1/2}}\nabla P_{\rm NH}=-\mu \sum_{k=1}^N{T}_{\rm jk} \Delta \dt \ovV_j+O(\mu^2) 
\end{equation}
where the symmetric matrix $({T}_{jk})_{1\leq j,k\leq N}$ is defined by
$$
{T}_{jk}=-\frac{1}{6}l_j^3 \delta_{jk}+l_j l_k \big( \frac{1}{2} l_{j\vee k}+\sum_{m=j\vee k+1}^N l_m \big),
$$
with  $\delta_{jk}$ standing for Kronecker's symbol and $j\vee k=\max\{j,k\}$. This leads to the following multi-layer Boussinesq equations
\begin{equation}\label{multilayerBouss}
\begin{cases}
\dsp \dt \zeta +\nabla\cdot \big( \sum_{j=1}^N  l_j h \ovV_j )=0,\\
\dsp l_j \dt  \ovV_j-\mu \sum_{k=1}^N T_{jk}\Delta \dt \ovV_k+\eps l_j \ovV_j\cdot \nabla  \ovV_j+  l_j \nabla \zeta=0,
\end{cases}
\qquad (1\leq j\leq N);
\end{equation}
contrary to what we saw for the multi-layer NSW equations \eqref{multilayerNSW}, the equations on the velocities  $\ovV_j$  ($1\leq j\leq N$) are now coupled through the dispersive terms. One readily checks that \eqref{multilayerBouss} coincides with \eqref{Boussinesq} in the case of a single layer. It is certainly more convenient to see the $N$ equations on the $\ovV_j$ as a single evolution equation in ${\bf V}$ where
$$
\overline{\bf V}^{\rm T}=(\ovV_1^{\rm T},\dots,\ovV_N^{\rm T}).
$$
In dimension $d=1$ (the generalization to $d=2$ is straightforward), this vectorial equation can be written
$$
({\rm diag}(l_j)-\mu T\dx^2)\dt \overline{\bf V}+\eps {\rm diag}(l_j\ovV_j)  \dx \overline{\bf V}+{\bf l}\dx \zeta=0
$$
where ${\rm diag}(u_j)$ stands for the diagonal matrix with entries $u_1,\dots, u_N$ and ${\bf l}^{\rm T}=(l_1,\dots,l_N)$. The resolution of the multi-layer Boussinesq system requires therefore the inversion of an $N\times N$ matricial second order differential operator (as opposed to the scalar operator $1-\mu/3\dx^2$ in the case of a single layer). This is numerically more involved, but the interest of this multi-layer model is that its linear dispersion can approximate the dispersion relation of the full water waves equations with a very good accuracy. We refer to \cite{fernandez} for an analysis of this dispersion relation (the model studied there differs from \eqref{multilayerBouss} but only in the nonlinear terms, which do not affect the linear dispersion relation); apart from this, the mathematical analysis (see in particular the open problems mentioned in \S \ref{sectBoussinesq}) and the numerical implementation of \eqref{multilayerBouss} remain to be done.
\begin{remark}
When $N=2$, the matrix $T$ is given by
$$
T=\left(\begin{array}{cc} \frac{1}{3}l_1^3+l_1^2l_2 & \frac{1}{2}l_1l_2^2\\ \frac{1}{2}l_1l_2^2 & \frac{1}{3}l_2^2\end{array}\right)
$$
with $l_2=1-l_1$ and $0 < l_1 < 1$. The corresponding dispersion relation can be computed and $l_1$ can be chosen to match the dispersion relation of the water waves equations in the best way possible. It is however not possible to choose $l_1$ in such a way that the Taylor expansions of both expressions coincides at order $O(\mu^2)$ while this is possible for some of the $abcd$ systems \eqref{abcd} and for the Isobe-Kakinuma model (see \S \ref{sectIsobe} below, and more specifically Remark \ref{remcompareIK}).
\end{remark}

\subsubsection{Multi-layer fully nonlinear non hydrostatic models} \label{sectmultiSGN}

A multi-layer SGN-type model generalizing \eqref{multilayerNSW} can be derived by keeping the $O(\mu)$ terms and dropping the $O(\mu^2)$ terms in \eqref{layerav}. Working at this precision, and without making any weak nonlinearity assumption, the turbulent term can still be neglected, but it is necessary to keep the $O(\eps\mu)$ exchange term ${\bf S}_j$ and therefore to study the mass exchange coefficients $G_{j+1/2}$ defined in \eqref{defGj} as well as the vertical deviations of the horizontal velocity $\ovV_j-V_j^\pm$. 
\begin{lemma}
The following identities hold
\begin{align*}
G_{j+1/2}-\frac{1}{\eps }\dt z_{k+1/2}&= \nabla\cdot \sum_{k=1}^j h_k \ovV_k,\\
G_{j+1/2}&= \sum_{k=1}^j \big[\nabla\cdot (h_k \ovV_k)-l_k \nabla\cdot \big( \sum_{m=1}^N \nabla\cdot (l_m h \ovV_m) \big],\\
\ovV_j-\ovV_j^\pm&=-\mu {\bf T}^*_j \ovV_j\pm \mu\frac{h_j}{2}\nabla\nabla\cdot \sum_{k=1}^{j-1} h_k \ovV_k+O(\mu^2).
\end{align*}
\end{lemma}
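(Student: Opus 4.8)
The three identities are all algebraic consequences of the structure of the velocity field established in \S\ref{procinnerlayer}, together with the mass conservation equation in \eqref{layerav}; none of them requires fresh analysis, and I would treat them in order, the first being the only delicate one. For it, I would start from the second expression for $G_{j+1/2}$ in \eqref{defGj}, which gives
$$
G_{j+1/2}-\frac{1}{\eps}\dt z_{j+1/2}=-\frac{1}{\mu}w_j^+ +V_j^+\cdot\nabla z_{j+1/2}.
$$
The key observation is that the right-hand side is exactly $\nabla\cdot\int_{-1+\beta b}^{z_{j+1/2}}V$ when the upper limit is regarded as the $X$-dependent function $z_{j+1/2}$. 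Indeed, the global incompressibility formula $w=-\mu\nabla\cdot\int_{-1+\beta b}^z V$ (valid thanks to \eqref{contnorm}) is differentiated at fixed $z$, so its trace at $z=z_{j+1/2}$ differs from the $\nabla\cdot$ of the moving-limit flux precisely by the Leibniz boundary term $V_j^+\cdot\nabla z_{j+1/2}$ coming from the upper limit, the lower-limit contribution being already contained in the formula for $w$ and hence common to both sides. Since $\int_{-1+\beta b}^{z_{j+1/2}}V=\sum_{k=1}^{j}h_k\ovV_k$, this yields $G_{j+1/2}-\frac{1}{\eps}\dt z_{j+1/2}=\nabla\cdot\sum_{k=1}^{j}h_k\ovV_k$ (the index $k$ in the statement should read $j$).

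For the second identity I would compute $\frac{1}{\eps}\dt z_{j+1/2}$ explicitly: since $z_{j+1/2}=z_{1/2}+(\sum_{k=1}^{j}l_k)h$ and $\dt h=\eps\dt\zeta$ (the bottom being time independent), one has $\frac{1}{\eps}\dt z_{j+1/2}=(\sum_{k=1}^{j}l_k)\dt\zeta$. The mass conservation equation in \eqref{layerav} gives $\dt\zeta=-\nabla\cdot(\sum_{m=1}^{N}l_m h\ovV_m)$; substituting this and the first identity produces
$$
G_{j+1/2}=\sum_{k=1}^{j}\Big[\nabla\cdot(h_k\ovV_k)-l_k\nabla\cdot\big(\sum_{m=1}^{N}l_m h\ovV_m\big)\Big],
$$
which is the claimed formula. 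The third identity then follows by evaluating the expansion \eqref{expvellayer} of $V_j$ at the two interface heights $z=z_{j\pm1/2}$ (the left-hand side being meant as $V_j^\pm$): the $z$-dependent prefactor of the last term becomes $\frac{1}{2}(z_{j+1/2}+z_{j-1/2})-z_{j\pm1/2}=\mp\frac{h_j}{2}$, giving $\ovV_j-V_j^\pm=-\mu{\bf T}_j^*\ovV_j\pm\mu\frac{h_j}{2}\nabla\nabla\cdot\sum_{k=1}^{j-1}h_k\ovV_k+O(\mu^2)$, with ${\bf T}_j^*\ovV_j$ understood as its trace at the corresponding interface.

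The only real obstacle is the careful bookkeeping of the Leibniz boundary terms in the first identity. One must keep track of the distinction between differentiating $\int_{-1+\beta b}^z V$ at fixed $z$ (as in the formula for $w$) and differentiating the full flux $\int_{-1+\beta b}^{z_{j+1/2}}V$ with the moving upper limit, and verify that the lower-limit contribution cancels against the bottom boundary term. If instead one prefers to expand the flux layer by layer, the same cancellation appears as the vanishing of the interface jumps $\nabla z_{k+1/2}\cdot(V_{k+1}^- -V_k^+)$, which follows from the continuity of the normal velocity \eqref{contnorm} once one notes that $w$ is continuous across interfaces. Everything beyond this is direct substitution.
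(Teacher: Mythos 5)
Your proof is correct and follows essentially the paper's route: the second identity is obtained exactly as in the paper from $\dt h_k=l_k\dt h$ together with the mass equation, the third by evaluating \eqref{expvellayer} at $z=z_{j\pm1/2}$, and your corrections of the statement's typos ($k\to j$ in the first identity, $\ovV_j^\pm\to V_j^\pm$ in the third) agree with the paper's intent. The only organizational difference is in the first identity, where the paper integrates the incompressibility condition over each layer to get $\dt h_k+\eps\nabla\cdot(h_k\ovV_k)=\eps\big(G_{k+1/2}-G_{k-1/2}\big)$ and telescopes using $G_{1/2}=0$, whereas you integrate from the bottom in one stroke via $w=-\mu\nabla\cdot\int_{-1+\beta b}^z V$ and a Leibniz boundary term at the moving upper limit --- an equivalent computation; the one small imprecision is in your aside on layer-by-layer bookkeeping, where \eqref{contnorm} gives directly the continuity of $w-\mu V\cdot\nabla z_{k+1/2}$ across interfaces (which is all the telescoping needs), not the continuity of $w$ itself.
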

\begin{proof}
Integrating vertically the incompressibility condition \eqref{Euler2} (in dimensionless form) from $z_{k-1/2}$ to $z_{k+1/2}$ yields
$$
\dt h_k +\eps \nabla\cdot  ( h_k \ovV_k)=\eps \big( G_{k+1/2}-G_{k-1/2}\big).
$$
Summing these identities from $k=1$ to $j$ yields the first identity of the lemma. For the second one, we recall that $h_k=l_k h$ and remark that $\dt h_k=l_k \dt h=-\eps l_k \nabla\cdot \sum_{m=1}^N h_m \ovV_m$. Finally, the last identity is a direct consequence of \eqref{expvellayer}.
\end{proof}
This lemma allows one to replace the exchange term ${\bf S}_j$ in \eqref{layerav} by a differential polynomial of $h$ and $(\ovV_j)_{1\leq j \leq N}$. For the non-hydrostatic term we can use the results of \S \ref{procinnerlayer} to obtain a fully nonlinear generalization of \eqref{PNHBouss}. This leads to a multi-layer generalization of the SGN equations \eqref{SGN_V} which to our knowledge has not been studied or numerically implemented yet. Multi-layer non-hydrostatic models goes back at least to the works \cite{casulli,Stelling:2003fk} that led to the Swash simulation code, and they have also been used for the Nhwave simulation code \cite{Ma}. In these references, the framework is slightly different since the multi-layer aspect appears through a vertical discretization of the velocity field;  the vertical non-hydrostatic pressure gradient is for instance discretized using the Keller box scheme in \cite{Stelling:2003fk}. The link between these numerical approaches and multi-layer modeling was shown in \cite{bai2013dispersion,bai2015dispersion} where it was shown that  this discretization does not provide the correct dispersion relation when applied in the case of a single layer (one gets a coefficient $1/4$ instead of $1/3$ in \eqref{dispBouss}), but that this drawback gets compensated by the increase of the number of layers. More recently, the multi-layer non hydrostatic approach described above has been used in \cite{fernandez} as a generalization of previous works on multi-layer hydrostatic models (e.g. \cite{audusse2011,fernandez2014}). The authors propose a fully nonlinear model that has the advantage of reducing to the correct SGN model in the particular case of a single layer. The model proposed in the lines above also coincides with the SGN model in the case of a single layer. For multiple layers, its linear part is the same as for the model of  \cite{fernandez} but there are differences in the nonlinear terms essentially due to the fact that instead of using $V_j^+$ (for instance) in \eqref{defSj}, the authors use an interface velocity $\widetilde{V}_{j+1/2}$ defined as a linear combination of $\ovV_j$ and $\ovV_{j+1}$. It could of course be of interest to investigate these differences, both numerically and theoretically (by controlling the error with the full Euler equations for instance).

\subsection{The Isobe-Kakinuma model}\label{sectIsobe}

Another set of equations providing a high order approximation to the water waves equations in shallow water is the Isobe-Kakinuma model. Isobe \cite{isobe1995} and \cite{kakinuma2001} started from Luke's variational formulation of the water waves equations (see Remark \ref{remLuke}) but replaced the velocity potential $\Phi$ by an approximation $\Phi^{\rm app}$ in the expression for the Lagrangian. In dimensionless form, this approximation is taken of the form
$$
\Phi^{\rm app}=\sum_{i=0}^N \Psi_i(z;\beta b)\phi_i(X,t)
$$
where the functions $\Psi_i$ depend on the vertical coordinate $z$ and on the bottom parametrization $b$, while the functions $\phi_j$ are unknown quantities to be determined. In order to do so, an approximate Lagrangian density is introduced
$$
{\mathcal L}_{\rm app}(\phi_0,\dots,\phi_N,\zeta):={\mathcal L}_{\rm Luke}(\Phi^{\rm app},\zeta),
$$
where we recall that ${\mathcal L}_{\rm Luke}$ is defined in \eqref{eqLuke}. The Isobe-Kaninuma model is obtained by writing the Euler-Lagrange equation corresponding to this approximated Lagrangian. The resulting model depends therefore on the choice of the basis functions ${\Psi}_i$.  A natural and commonly made choice is to take
$$
\Psi_i(z;\beta b)=\mu^{p_i/2}(z+1-\beta b)^{p_i}
$$
with $p_i=i$ in the general case and $p_i=2i$ in the case of a flat bottom ($b=0$). The resulting Isobe-Kakinuma (IK) model is a set of $N+2$ equations; in the case of a flat bottom (see \cite{nemoto2018} for the generalization to non flat bottoms), it is given (in variables with dimensions) by
\begin{equation}\label{eqIK}
\begin{cases}
 h^{2i}\dt \zeta
+\sum_{j=0}^N \mu^j \Big[ \nabla\cdot \Big( \frac{1}{2(i+j) +1}h^{2(i+j)+1}\nabla\phi_j\Big)- \frac{4ij}{2(i+j)-1}h^{2(i+j)-1}\phi_j\Big]=0, \\
\hspace{9.5cm}(0\leq i\leq N),\\
 \sum_{i=0}^N h^{2i}\dt \phi_i +\zeta +\frac{\eps}{2}\sum_{i,j=0}^N \mu^{i+j}\Big(h^{2(i+j)}\nabla\phi_i\cdot \nabla\phi_j+\frac{4ij}{\mu^2} h^{2(i+j)-2}\phi_i\phi_j\Big)=0.
\end{cases}
\end{equation}
\begin{remark}
The approximate velocity field associated to the IK approximation is ${\bf U}^{\rm app}=\nabla_{X,z} \Phi^{\rm app}$. By construction, it is irrotational, as opposed to the approximate velocity field used for the multi-layer models of \S \ref{sectmultilayer}. Conversely, while this latter was by construction incompressible, ${\bf U}^{\rm app}$ is only appoximately incompressible. Actually, the $N+1$ evolutions equations on $\dt \zeta$ can be viewed as conditions to impose that ${\bf U}^{\rm app}$ is approximately divergence free in the following sense
$$
\int_{-1}^{\eps \zeta} (z+1)^{2i}\big(\mu \nabla\cdot V^{\rm app}+\dz w_{\rm app}\big)=0, \qquad 0\leq i \leq N.
$$
Indeed, this equation is equivalent to
\begin{align*}
h^{2i}\big(w_{\rm app}&-\eps\mu \nabla\zeta\cdot V_{\rm app}\big)\\
&+\sum_{j=0}^N \mu^j \Big[ \nabla\cdot \Big( \frac{1}{2(i+j) +1}h^{2(i+j)+1}\nabla\phi_j\Big)- \frac{4ij}{2(i+j)-1}h^{2(i+j)-1}\phi_j\Big]=0,
\end{align*}
which coincides with the $i$-th equation in \eqref{eqIK} upon replacing $w_{\rm app}-\eps\mu \nabla\zeta\cdot V_{\rm app}$ by $\dt \zeta$ as a consequence of the dimensionless version of the kinematic boundary condition \eqref{Euler4}.
 \end{remark}
The structure of this system is quite unusual as there are $N+1$ equations on the surface elevation $\zeta$ and one equation on a linear combination of the $\phi_i$. Clearly, the system is overdetermined on $\zeta$ and the  problem is characteristic in time. It is therefore not well posed in general and certain constraints are necessary on the $\phi_j$ to construct a solution. Let us briefly sketch here the strategy used in \cite{murakami2015solvability,nemoto2018} to prove well-posedness for \eqref{eqIK} (their proof also cover the case with a non flat topography). To make things even easier, let us consider the simplest case $N=1$; the system \eqref{eqIK} can then be written as
$$
\begin{cases}
\zeta_t+\nabla\cdot \big(h\nabla\phi_0+\frac{\mu}{3}h^3 \nabla \phi_1)=0,\\
h^2\zeta_t+\nabla\cdot \big(\frac{1}{3}h^3\nabla\phi_0+\frac{\mu}{5}h^5 \nabla \phi_1)-\frac{4}{3}h^3\phi_1=0,\\
\dt \phi_0+\mu h^2 \dt \phi_1+\zeta+\frac{\eps}{2}\vert \nabla \phi_0\vert^2+\eps \mu h^2 \nabla\phi_0\cdot \nabla\phi_1+2\eps \mu h^2\phi_1^2=0,
\end{cases}
$$
or, in compact form,
\begin{equation}\label{neweqIK}
\begin{cases}
\dt \zeta-L_{11}\phi_0-\mu L_{12}\phi_1=0,\\
h^2 \dt \zeta-L_{12}^* \phi_0-L_{22}\phi_1=0,\\
\dt \phi_0+\mu h^2 \dt \phi_1+\zeta+\frac{\eps}{2} \vert \nabla \phi_0\vert^2=\eps \mu F_1,
\end{cases}
\end{equation}
where the expressions for the second order differential operators $L_{ij}$ and the function $F_1$ follow easily by identification between both formulations. Eliminating $\zeta$ from the first two equations gives the constraint that $\phi_0$ and $\phi_1$ must satisfy, namely,
$$
h^2\big( L_{11}\phi_0+L_{12}\phi_1\big)=L_{12}^*\phi_0+L_{22}\phi_1
$$
or, more explicitly,
\begin{equation}\label{constr}
\frac{1}{2}\Delta\phi_0+(1+\frac{1}{10}\mu h^2 \Delta)\phi_1=0.
\end{equation}
Time differentiating this relation and using the first equation of \eqref{neweqIK} to eliminate $\dt \zeta$, one gets 
$$
\frac{1}{2}\Delta\dt \phi_0+(1+\frac{1}{10}\mu h^2 \Delta)\dt \phi_1=\eps \mu F_2.
$$
where $F_2$ is a function of $\zeta$, $\phi_0$ and $\phi_1$ and their space derivatives. Complementing this equation with the first and third equations in \eqref{eqIK} this yields a set of three equations on $\zeta$, $\phi_0$ and $\phi_1$,
\begin{equation}\label{IKcar}
\begin{cases}
\zeta_t+\nabla\cdot \big(h\nabla\phi_0+\frac{\mu}{3}h^3 \nabla \phi_1)=0,\\
\left(\begin{array}{cc} 1 & \mu h^2 \\ \frac{1}{2}\Delta & (1+\mu \frac{1}{10}h^2\Delta)\end{array}\right)\dt\left(\begin{array}{c} \phi_0\\ \phi_1\end{array}\right) +\left(\begin{array}{c} \zeta +\frac{\eps}{2}\vert \nabla\phi_0\vert^2 \\
0 \end{array}\right)=\eps \mu \left(\begin{array}{c} F_1 \\ F_2 \end{array}\right)
\end{cases}
\end{equation}
Contrary to \eqref{eqIK}, this system is non characteristic and  can be solved under certain hyperbolicity conditions (non vanishing depth, Rayleigh-Taylor condition), see \cite{murakami2015solvability}. The dispersion relation associated to \eqref{IKcar} is easily computed, 
$$
\omega_{\rm IK}^2(k)=k^2 c_{\rm IK}(k)^2 \quad \mbox{ with }\quad c_{\rm IK}(k)^2=\frac{1+\frac{1}{15}\mu k^2}{1+\frac{2}{5}\mu k^2}.
$$
(there is also a trivial component $\omega=0$ in the dispersion relation that corresponds to the propagation of the constraint \eqref{constr}). The dispersive properties of \eqref{IKcar} are excellent, since one can check that $c_{\rm IK}^2$ is the $[2/2]$-Pad\'e approximant of the square of the phase velocity of the linear water waves equations, namely, $c_{\rm WW}^2=\omega_{\rm WW}(k)^2/k^2$. This remarkable property can actually be generalized for $N\geq 1$;  indeed, it is shown in \cite{nemoto2018} that if $p_i=2i$ then the square of the phase velocity associated to the Isobe-Kakinuma model \eqref{eqIK}  is the $[2N/2N]$ Pad\'e approximant of $c_{\rm WW}^2$. Since the IK model has been derived in a formal way, it is not possible to say a priori that it furnishes a good approximation to the water waves equations (in the sense discussed in \S \ref{sectjustif} below). However, quite surprisingly, this is the case and the matching of this model with the full water waves equations is also excellent at the nonlinear level: as proved in \cite{iguchi2018,iguchi2}, the IK model furnishes an approximation of order $O(\mu^{2N+1})$ of the water waves equations in the case of a flat bottom (and of order $O(\mu^{2[N/2]+1})$ when the bottom is not flat). As the multi-layer models considered in the previous section, and contrary to the higher order SGN systems of \cite{matsuno} for instance, the IK model also has the interesting feature that it does not contain high order derivatives.

The strategy adopted in \cite{iguchi2018,iguchi2} to prove that the IK model is a high order shallow water approximation of the water waves equations is the following. With $\zeta$, $\phi_0$ and $\phi_1$ solving \eqref{neweqIK}, and setting $\psi=\phi_0+\mu h^2 \phi_1$,  it is shown that $(\zeta,\psi)$ solves the dimensionless version \eqref{ZCS_ND} of the Zakharov-Craig-Sulem formulation of the water waves equations up to $O(\mu^3)$ terms --we consider here the case of $N=1$ with a flat bottom but, as shown in \cite{iguchi2},  this can be generalized to any $N\geq 1$ and non flat bottoms. The error with the exact solution of the water waves equations can then be controled using the well-posedness and stability results of the ZCS formulation proved in \cite{Alvarez,Iguchi2009}. The key step of this approximation results consist therefore in checking that
$$
\nabla\cdot \big(h \nabla\phi_0+\mu \frac{1}{3}h^3 \nabla \phi_1\big)=-\frac{1}{\mu}G_\mu[\eps\zeta]\psi+O(\mu^3),
$$
where $G_\mu[\eps\zeta]=G_\mu[\eps\zeta,0]$ is the dimensionless Dirichlet-Neumann operator defined in Remark \ref{remZCS_ND}. Indeed, this identity directly shows that the first equation in \eqref{neweqIK} is equivalent at $O(\mu^3)$ precision to the first equation of the ZCS equations \eqref{ZCS_ND}, and the third equation of \eqref{neweqIK} can be similarly matched with the second equation of \eqref{ZCS_ND}. 

\begin{remark}
It is also of interest to point out that, as remarked in \cite{DucheneIguchi}, the IK model satisfies the same canonical Hamiltonian structure \eqref{Hamil} as the Zakharov-Craig-Sulem formulation of the water waves equations. More precisely, the equations on $\zeta$ and $\psi$ derived from the IK model are the canonical Hamiltonian equations for an Hamiltonian $H^{\rm IK}$ obtained by replacing $\Phi$ by $\Phi^{\rm app}$ in the Hamiltonian of the water waves equations.
\end{remark}

Since the excellent matching of the IK model with the ZCS equations may look quite unexpected, let us propose an alternative derivation of the IK model  when $N=1$ when the bottom is flat (the generalization to more general cases is not obvious). As above, we consider an approximation of the velocity potential of the form
$$
\Phi^{\rm app}(t,X,z)=\phi_0(t,X)+\mu (z+1)^2 \phi_1(t,X)
$$
and we impose that this approximation matches the exact velocity potential at the surface, that is,
\begin{equation}\label{newig1}
\phi_0+\mu h^2 \phi_1=\psi.
\end{equation}
Recalling that the evolution equation on $\zeta$ can be written $\dt \zeta +\nabla\cdot (h\ovV)=0$, where $\ovV$ is the vertically averaged horizontal velocity, a natural approximation is given by
$$
\dt \zeta +\nabla\cdot \big( h \ovV^{\rm app}\big)=0 \quad \mbox{ with } \quad \ovV^{\rm app}=\frac{1}{h}\int_0^{\eps\zeta} \nabla \Phi^{\rm app},
$$
and therefore
$$
\dt \zeta+\nabla\cdot \big(h \nabla\phi_0+\mu \frac{1}{3}h^3 \nabla \phi_1\big)= 0;
$$
this is the first equation of the IK model \eqref{neweqIK}.
We want this approximation to be as good as possible an approximation of the first equation of the ZCS formulation; more precisely, we want to choose $\phi_0$ and $\phi_1$ in terms of $\zeta$ and $\psi$ such that 
\begin{equation}\label{newig2}
\nabla\cdot \big(h \nabla\phi_0+\mu \frac{1}{3}h^3 \nabla \phi_1\big)=-\frac{1}{\mu}G_\mu[\eps\zeta]\psi +O(\mu^3).
\end{equation}
It is of course possible to replace $\frac{1}{\mu}G_\mu[\eps\zeta]\psi $ in the above condition by its third order expansion with respect to $\mu$. Such an expansion can be found in \cite{Alvarez,Iguchi2009,Lannes2013,iguchi2018} for instance. It can be written as follows
$$
\frac{1}{\mu}G_\mu[\eps\zeta]\psi=G_0+\mu G_1 +\mu^2 G_2 +O(\mu^3),
$$
with 
\begin{align*}
G_0&=-\nabla\cdot \big( h \nabla \psi \big),\\
G_1&=-\frac{1}{3}\Delta \big( h^3 \Delta \psi \big),\\
G_2 &=-\frac{1}{15}\Delta \big( h^3 \Delta (h^2 \Delta \psi)\big)-\frac{1}{15}\Delta \big( h^2 \Delta (h^3 \Delta \psi)\big)+\eps^2\frac{1}{5} \Delta \big( \abs{\nabla\zeta}^2 h^3 \Delta\psi \big).
\end{align*}
Finding $\phi_0$ and $\phi_1$ satisfying \eqref{newig1} and \eqref{newig2} can be reduced to the following system
$$
\begin{cases}
\phi_0+\mu h^2 \phi_1=\psi,\\
\nabla\cdot \big(h \nabla\phi_0+\mu \frac{1}{3}h^3 \nabla \phi_1\big)=-G_0-\mu G_1-\mu^2 G_2+O(\mu^3).
\end{cases}
$$
Replacing $\phi_0=\psi-\mu h^2 \phi_1$ in the second equation, one arrives after some computations to
$$
\phi_1=-\frac{1}{2}\Delta \psi - \frac{1}{10}\mu \Delta (h^2 \Delta \psi)-\frac{1}{10 h}\mu\Delta(h^3 \Delta \psi)+\mu \eps^2 \frac{3}{10}\abs{\nabla\zeta}^2 \Delta \psi
+O(\mu^2).
$$
Remarking that $f=(1+\mu A)g +O(\mu^2)$ is equivalent in the sense of Taylor expansions to $(1-\mu A) f=g +O(\mu^2)$, we obtain, after dropping the $O(\mu^2)$ residual, 
$$
\big[ (\big(1-\mu\eps h \Delta \zeta-\mu \eps^2 \abs{\nabla\zeta}^2\big)-\mu \frac{2}{5}\frac{1}{h^3}\nabla\cdot \big( h^5 \nabla \big) \big]\phi_1=-\frac{1}{2}\Delta \psi.
$$
Recalling that $\phi_0=\psi-\mu h^2 \phi_1$, one readily checks that $(\phi_0,\phi_1)$ satisfies the constraint \eqref{constr} of the IK model and is therefore the same pair as the one derived above with variational arguments. The second equation of \eqref{neweqIK}, or equivalently the constraint \eqref{constr}, is therefore equivalent to the condition \eqref{newig2}.

\begin{remark}\label{remcompareIK}
It is of interest to compare the IK model to the various (single or multi-layers) Boussinesq models developed in the previous sections. In order to do so, we set $\underline{V}=\nabla\psi$, $V_0=\nabla\phi_0$ and $V_1=\nabla\phi_1$, and we neglect the terms of order $O(\mu^2)$ and $O(\eps\mu)$. Taking also the gradient of the equation on $\psi$, one readily obtains
$$
\begin{cases}
(1-\mu \frac{2}{5}\Delta)\dt \zeta +\nabla\cdot (h \underline{V}) -\mu \frac{1}{15}\Delta\nabla\cdot \underline{V}=0,\\
\dt \underline{V}+\nabla\zeta +\eps \underline{V}\cdot \nabla \underline{V}=0,
\end{cases}
$$ 
which is one of the $abcd$ systems \eqref{abcd} previously derived. 
\end{remark}

\subsection{Scalar models}\label{sectscalar}

Intuitively, in the one dimensional case $d=1$, waves can be decomposed into components that  "go to the left" or "go to the right". It is therefore not a surprise that waves are then governed by a system of two scalar evolution equations. The idea behind scalar asymptotic models is that if we want to describe only waves that go mainly, say, "to the right", then a single scalar equation should be enough. We make this idea more precise in this section.

\medbreak

\noindent
{\bf N.B.} {\it Throughout this section, we shall focus on the case of a flat topography $b=0$}. We refer for instance to \cite{Johnson1973,Miles1979,VG1993,Israwi2010} for generalizations to a non flat topography.

\medbreak

 In dimension $d=1$, the SGN equations \eqref{SGN_V} reduce at leading order in $\eps$ and $\mu$ to the linear wave equation
$$
\begin{cases}
\dt \zeta +\dx \ovv=0,\\
\dt \overline{v}+\dx \zeta=0,
\end{cases}
$$
so that any perturbation of the rest state can be decomposed into a left-going and a right-going wave. Purely right-going waves are obtained when $\zeta=u$ and are therefore determined by
\begin{equation}\label{rightlin}
(\dt + \dx)\zeta=0 \quad \mbox{ and }\quad u=\zeta.
\end{equation}
The scalar models that are described below generalize this approach to more complex asymptotic models than the linear wave equation.
\begin{remark}
For the sake of simplicity, we consider here waves that are essentially unidirectional. In general, a perturbation of the surface elevation creates two counter-propagating waves. It is, under certain assumptions, possible to describe them by two \emph{uncoupled} scalar equations, as shown in \cite{Schneider:2000dz} for the KdV equation for instance (see also \cite{Lannes2013}).
\end{remark}
\subsubsection{A fully nonlinear, non-dispersive model}\label{sectFNscal}

Let us consider here the NSW equations which is fully nonlinear (no smallness assumption on $\eps$) but neglects all the terms of order $O(\mu)$ (where the dispersive terms are, as shown above); this is equivalent to taking $\mu=0$ in \eqref{SGN_V},
\begin{equation}\label{NSW1d}
\begin{cases}
\dt \zeta +\dx (h\overline{v})=0,\\
\dt \overline{v}+\eps\overline{v}\dx\overline{v}+\dx \zeta=0,
\end{cases}
\qquad (h=1+\eps\zeta).
\end{equation}
In the subcritical case, i.e. when $h-\eps^2\overline{v}^2>0$, this hyperbolic system can be diagonalized using the Riemann invariants. More precisely, introducing
$$
R_\pm(\zeta,\overline{v})=2\big(\sqrt{h}-1\big)\pm \eps \ovv
\quad\mbox{ and }\quad
\lambda_\pm(\zeta,\overline{v})=\pm \eps \ovv +\sqrt{h},
$$
the NSW equation can be diagonalized into two coupled transport equations
$$
(\dt \pm \lambda_\pm\dx)R_\pm =0.
$$
Purely right-going waves are therefore obtained if $R_-=0$ and therefore characterized by
\begin{equation}\label{burgers}
\dt\zeta+\dx \zeta+  3\eps \frac{\zeta}{1+\sqrt{1+\eps\zeta}}\dx\zeta=0,\quad\mbox{ and }\quad
\ovv=\frac{2}{\eps} \big(\sqrt{1+\eps\zeta}-1\big);
\end{equation}
as expected, this is a $O(\eps)$ perturbation of the relations defining right-going waves for the linear waves equations. The equation for $\zeta$ is a non-viscous Burgers equations whose solutions form shocks at the time scale $O(1/\eps)$. Note that solutions to the scalar model \eqref{burgers} are {\it exact} solutions to the NSW system \eqref{NSW1d}, sometimes called simple waves.
\begin{remark}\label{removv}
In \eqref{burgers}, $\zeta$ is determined through the resolution of a scalar evolution equation, and $\ovv$ is given by an algebraic expression in terms of $z$. It is of course possible to switch the roles of $\zeta$ and $\ovv$, leading to another kind of simple wave,
\begin{equation}\label{burgersv}
\dt \ovv +\dx \ovv +\eps \frac{3}{2}\ovv\dx \ovv=0 
\quad\mbox{ and }\quad
\zeta=\ovv+\eps\frac{1}{4}\ovv^2.
\end{equation}
\end{remark}

\subsubsection{A fully dispersive, linear model}
The symmetric case compared with the Burgers model \eqref{burgers} consists in  neglecting all the nonlinearities ($\eps=0$) and to keep all the terms in $\mu$ (the validity of the resulting model is therefore not restricted to shallow water regimes). For such an approximation, it is more convenient to work with the ZCS formulation \eqref{ZCS_ND}. The linear model thus obtained is
\begin{equation}\label{fullydisp}
\begin{cases}
\dsp \dt \zeta -  \omega_{\rm WW}(D)^2 \psi=0,\\
\dsp \dt \psi + \zeta =0.
\end{cases}
\end{equation}
where the symbol $\omega_{\rm WW}(k)$ of the Fourier multiplier $\omega_{\rm WW}(D)$ is given by
$$
\omega_{\rm ww}(k)=k \Big(\frac{\tanh(\sqrt{\mu}k)}{\sqrt{\mu}k}\Big)^{1/2}=:k c_{\rm WW}(k).
$$
The above system can therefore be diagonalized into two scalar uncoupled nonlocal equations
$$
\begin{cases}
\dt \big(\zeta+c_{\rm WW}(D)\dx\psi \big)+c_{\rm WW}(D) \dx  \big(\zeta+c_{\rm WW}(D)\dx\psi \big)=0,\\
\dt \big(\zeta-c_{\rm WW}(D)\dx\psi \big)-i c_{\rm WW}(D) \dx \big(\zeta-c_{\rm WW}(D)\dx\psi \big)=0.
\end{cases}
$$
Right-going waves correspond to waves with a positive group velocity and are therefore obtained when the equations are reduced to the first of these two scalar equations, i.e. when
\begin{equation}\label{linWitham}
\dt \zeta + c_{\rm WW}(D) \dx \zeta=0
\quad \mbox{ and }\quad \ovv= c_{\rm WW}(D)\zeta,
\end{equation}
where for the second relation, we used the identity $D\frac{\tanh(\sqrt{\mu}D)}{\sqrt{\mu}}\psi=-\dx\ovv$, which is exact when $\eps=\beta=0$. One can check that, as expected, \eqref{linWitham} is a formal $O(\mu)$ perturbation of \eqref{rightlin}.

As in Remark \ref{removv}, one can alternatively derive an equation on $\ovv$ and express $\zeta$ in terms of $\ovv$; one obtains
\begin{equation}\label{linWitham2}
\dt \ovv + c_{\rm WW}(D) \dx \ovv=0
\quad \mbox{ and }\quad \zeta= c_{\rm WW}(D)^{-1}\ovv.
\end{equation}

Here again, solutions to the scalar approximations \eqref{linWitham} or \eqref{linWitham2} furnish {\it exact} solutions to the underlying system \eqref{fullydisp}.
\subsubsection{The Whitham equation(s)}

We have so far obtained a fully nonlinear, nondispersive approximation ($\mu=0$, full dependence on $\eps$) and a fully dispersive, linear, approximation ($\eps=0$, full dependence on $\mu$). These approximations are given respectively by \eqref{burgers} and \eqref{linWitham}. Combining both models, a $O(\eps\mu)$ approximation is obtained, namely
\begin{equation}\label{altWhitham}
\dt \zeta + c_{\rm WW}(D) \dx \zeta+3\eps \frac{\zeta}{1+\sqrt{1+\eps\zeta}}\dx\zeta=0
\end{equation}
and
$$
 \ovv= c_{\rm WW}(D)  \zeta+ \frac{2}{\eps} \big(\sqrt{1+\eps\zeta}-1-\frac{1}{2}\eps\big).
$$

Taking $\ovv$ instead of $\zeta$ as reference to build the scalar approximation, as in Remark \ref{removv}, one obtains the following approximation
\begin{equation}\label{Whitham}
\dt \ovv + c_{\rm WW}(D) \dx \ovv+\frac{3}{2}\eps \ovv\dx\ovv=0
\quad\mbox{ and }\quad
 \zeta= c_{\rm WW}^{-1}(D)  \ovv+ \frac{\eps}{4}\ovv^2.
\end{equation}
This latter equation is known as the Whitham equation, proposed in \cite{Whitham,Whitham_book} as an alternative to the KdV equation with weaker (and better) dispersive properties, and able to reproduce peaking and wave breaking. This equation has been intensively studied in recent years. For instance, existence and stability of solitary waves has been proved in \cite{EGW} and their peaking towards cusped solutions in \cite{EhrnstromWahlen}, and wave breaking has been rigorously established in \cite{Naumkin,ConstantinEscher,Hur} as a manifestation of the general rule that weakly dispersive perturbations to the Burgers equation lead to the formation of singularities \cite{CCG}. We also refer to \cite{klein2018whitham} for several related conjectures motivated by numerical computations. A natural question is to ask wether such results still hold for  the alternative Whitham equation \eqref{altWhitham} on the surface elevation.

\subsubsection{The KdV and BBM equations}

The KdV and BBM equations are the scalar equations associated to the Boussinesq equations \eqref{Boussinesq}, which, we recall, are a $O(\mu^2)$ approximation of the water waves equations \eqref{formzetaQ3_ND_V} under the weak nonlinearity assumption \eqref{weakNL}, namely, $\eps=O(\mu)$. These equations can be derived from the Boussinesq equations \eqref{Boussinesq} along a procedure similar to the one used below to derive the Camassa-Holm equation from the SGN equation. As we show now, it can also be derived directly from the Whitham equation \eqref{altWhitham}.

Indeed, under the weak nonlinearity approximation, the Whitham equation \eqref{altWhitham} furnishes a $O(\eps\mu)=O(\mu^2)$ approximation of the water waves equation \eqref{formzetaQ3_ND_V}. This will remain true if we replace the non local dispersive term of the Whitham equation by a $O(\mu^2)$ approximation and the nonlinear term by a $O(\eps^2)=O(\mu^2)$ approximation. Since
$$
c_{\rm WW}(D)\dx \zeta= \dx \zeta +\frac{1}{6}\mu \dx^3 \zeta+O(\mu^2)
\quad \mbox{ and }\quad
 \frac{3\eps\zeta}{1+\sqrt{1+\eps\zeta}}\dx\zeta=\frac{3}{2}\eps\zeta\dx\zeta+O(\eps^2),
$$
we obtain the KdV equation
\begin{equation}\label{KdV}
\dt \zeta+\dx\zeta+\frac{1}{6}\mu \dx^3 \zeta+\frac{3}{2}\eps\zeta\dx\zeta=0;
\end{equation}
it is notable that one arrives at the same equation if we make similar approximations on the Whitham equation \eqref{Whitham} on the velocity $\ovv$ instead of the surface elevation $\zeta$.

We have seen in \S \ref{sectBoussinesq} that there is a whole family of Boussinesq systems, the $abcd$ systems \eqref{abcd} that all furnish a $O(\mu^2)$ approximation  to the water waves equations under the weak nonlinearity assumption \eqref{weakNL}. One of the arguments used to derive the $abcd$ system from the Boussinesq system \eqref{Boussinesq} is the so-called BBM trick that was introduced to derive the BBM equation from the KdV equation \cite{BBM}. It consists in remarking that owing to \eqref{KdV} and the weak nonlinearity assumption, one has
$
\dt \zeta=-\dx \zeta +O(\mu)
$, so that $\mu\dx^3 \zeta=-\mu\dx^2 \dt \zeta +O(\mu^2)$. Without damaging the $O(\mu^2)$ precision of the KdV approximation, one can use instead the BBM equation,
\begin{equation}\label{BBM}
(1-\frac{1}{6}\mu \dx^2)\dt \zeta +\dx \zeta +\frac{3}{2}\eps\zeta\dx\zeta=0
\end{equation}
or more generally, any member of the KdV/BBM family
\begin{equation}\label{KdVBBM}
\big( 1 +(p-\frac{1}{6}) \mu\dx^2 \big) \dt \zeta +\dx\zeta + \mu p \dx^3 \zeta +\frac{3}{2}\eps\dx\zeta=0\qquad (p\leq \frac{1}{6}).
\end{equation}

\subsubsection{The Camassa-Holm equation}

The equations from the KdV/BBM family \eqref{KdVBBM} are all globally well posed in reasonable Sobolev spaces and therefore unable to reproduce the wave breaking phenomenon. The reason of this is that dispersion balances the nonlinearity. There are two possibilities to avoid such a situation. The first one is to work with a model with weaker dispersion: this corresponds to the Whitham equations \eqref{altWhitham} and \eqref{Whitham} which, under the weak nonlinearity assumption, furnish an approximation of the same precision as the KdV/BBM family. And indeed, as we have seen, the Whitham equation (the classical one \eqref{Whitham} at least) can lead to wave breaking. The second possibility to obtain wave breaking is to work with a model having stronger nonlinearities. In order to do so while keeping the $O(\mu^2)$ precision of the KdV/BBM family, one can relax the weak nonlinearity assumption \eqref{weakNL} and replace it by
\begin{equation}\label{moderate}
\mbox{Moderate nonlinearity:} \qquad \eps=O(\sqrt{\mu}).
\end{equation}

Under this assumption, one must keep the $O(\eps\mu)$ terms in order to keep the same $O(\mu^2)$ precision as for the KdV-BBM family. Among the asymptotic systems derived above, the only one that takes this terms into account is the SGN model. In dimension $d=1$, this model can be written under the form
\begin{equation}\label{SGN1d}
\begin{cases}
\dt \zeta +\dx (h\ovv)=0,\\
\dt \ovv +\eps \ovv \dx \ovv+\dx \zeta =\frac{\mu}{3}\frac{1}{h}\dx\big[ h^3 (\dx\dt \ovv +\eps \ovv \dx^2\ovv-\eps (\dx\ovv)^2)\big].
\end{cases}
\end{equation}
Let us for instance seek an equation on $\ovv$ and an algebraic expression for $\zeta$ in terms of $\ovv$. Since the SGN equations are a $O(\mu)$ perturbation of the NSW equations, right going waves are expected to be a $O(\mu)$ perturbation of the Burgers equation \eqref{burgersv}, that is,
$$
\dt \ovv +\dx \ovv +\eps \frac{3}{2}\ovv\dx \ovv+\mu P=0 
\quad\mbox{ and }\quad
\zeta=\ovv+\eps\frac{1}{4}\ovv^2+\mu R. 
$$
where $P$ and  a function of $\ovv$ and its derivatives. Plugging the ansatz for $\zeta$ in the second equation of \eqref{SGN1d}, one gets
$$
\dt \ovv +\dx \ovv +\eps \frac{3}{2}\ovv\dx \ovv+\mu \dx R=\frac{1}{3}\mu (1-\eps\ovv)\dx^2 \dt \ovv+\frac{1}{3}\mu\eps \dx \big[3\ovv \dx\dt \ovv+\ovv\dx^2\ovv-(\dx\ovv)^2\big]
$$
or equivalently, using the ansatz for the scalar equation for $\ovv$,
$$
P=\dx R-\frac{1}{3} (1-\eps\ovv)\dx^2 \dt \ovv-\frac{1}{3}\eps \dx \big[3\ovv \dx\dt \ovv+\ovv\dx^2\ovv-(\dx\ovv)^2\big].
$$
This last equation gives $P$ in terms of $R$ and we therefore just have to find an expression for this latter quantity in terms of $\ovv$. In order to do so, we plug the ansatz for $\zeta$ in the first equation of \eqref{SGN1d}. This yields an evolution equation for $\ovv$ that should of course be the same as our ansatz. By identification, this yields an expression for $R$, from which we deduce $P$. We refer to \cite{ConstantinLannes} or \cite{Lannes2013} for the details of the computations; the final outcome is a family of Camassa-Holm equations that generalizes the above KdV/BBM family,
\begin{equation}\label{CHv}
\dt \ovv +\dx \ovv +\eps \frac{3}{2}\ovv\dx \ovv+\mu \big({\bf a}\dx^3 \ovv +{\bf b}\dx^2\dt \ovv \big)
=\eps\mu\big({\bf c} \ovv \dx^3 \ovv +{\bf d}\dx\ovv\dx^2 \ovv\big),
\end{equation}
where
$$
{\bf a}=p, \quad {\bf b}=p-\frac{1}{6}, \quad {\bf c}=-\frac{3}{2}p-\frac{1}{6}, \quad {\bf d}=-\frac{9}{2}p -\frac{23}{24},
$$
the parameter $p$ coming, as for the KdV-BBM family, from using the BBM trick. Note that a wider range of parameters can be achieved by performing a change of variable on the velocity of the same kind as \eqref{Nwogu} in the derivation of the $abcd$ systems \eqref{Boussinesq}. The equation one would obtain for $\zeta$ is \cite{Lannes2013}
\begin{equation}\label{CHzeta}
\dt\zeta+\dx \zeta+  3\eps \frac{\zeta}{1+\sqrt{1+\eps\zeta}}\dx\zeta+\mu \big({\bf a}\dx^3 \zeta +{\bf b}\dx^2\dt \zeta \big)
=\eps\mu\big({\bf c} \zeta \dx^3 \zeta +{\bf d}\dx\zeta\dx^2 \zeta\big);
\end{equation}
expanding the nonlinear terms into powers of $\eps$ up to $O(\eps^4)$ (recall that under the assumption of moderate nonlinearity, one has $O(\eps^4)=O(\mu^2)$ so that the corresponding terms can be neglected), one gets \cite{ConstantinLannes}
\begin{align}
\nonumber
\dt\zeta+\dx \zeta+  \frac{3}{2}\eps \zeta\dx\zeta -\frac{3}{8}\eps^2 \zeta^2 \dx \zeta +\frac{3}{16}\eps^3 \zeta^3 \dx\zeta &+\mu \big({\bf a}\dx^3 \zeta +{\bf b}\dx^2\dt \zeta \big)\\
\label{CHzeta2}
&=\eps\mu\big({\bf c} \zeta \dx^3 \zeta +{\bf d}\dx\zeta\dx^2 \zeta\big).
\end{align}
Compared to the KdV-BBM family, the inclusion of new nonlinear terms of size $O(\eps\mu)$ (as well as $O(\eps^2)$ and $O(\eps^3)$ in \eqref{CHzeta2})  restores the possibility of wave breaking, as shown in \cite{ConstantinLannes} for \eqref{CHv} and \eqref{CHzeta2} (and this could likely be extended to \eqref{CHzeta2}); we recall that wave breaking for $\ovv$ means that $\ovv$ remains bounded but that $\dx\ovv$ blows up in finite time (and a similar definition holds of course for $\zeta$). This wave breaking is shown to occur on a $O(1/\eps)$ time scale, which is the same as for the Burgers equations \eqref{burgers} and \eqref{burgersv}. 

Let us mention finally that \eqref{CHv} can be related, up to some rescaling, to the Camassa-Holm equation \cite{Fokas,Camassa}
$$
\dt U+\widehat{\kappa}\dx U+ 3 U \dx U- \dt\dx^2 U= 2 \dx U\dx^2 U+U\dx^3 U\qquad (\widehat{\kappa}\neq 0)
$$
provided that ${\bf b}<0$, ${\bf a}\neq {\bf b}$, ${\bf b}=-2{\bf c}$, ${\bf d}=2{\bf c}$ or to the Degasperis-Procesi equation \cite{DP}
$$
\dt U+\widehat{\kappa}\dx U+ 4 U \dx U- \dt\dx^2 U= 3 \dx U\dx^2 U+U\dx^3 U\qquad (\widehat{\kappa}\neq 0)
$$
provided that ${\bf b}<0$, ${\bf a}\neq {\bf b}$, ${\bf b}=-\frac{8}{3}{\bf c}$, ${\bf d}=3c$. There is a huge literature devoted to these two equations and which can be used to get some insight on the behavior of \eqref{CHv} (note however that the case $\widehat{\kappa}=0$, which has a very rich mathematical structure, {\it cannot} be related to a one directional shallow water wave propagation model). A natural question is therefore to ask which of these properties remain true for other ranges of the parameters in \eqref{CHv} and for the equations \eqref{CHzeta} and \eqref{CHzeta2} on the surface elevation.

\subsubsection{Two dimensional generalizations}

As we have seen, in dimension $d=1$ and in shallow water, perturbations of the surface elevation split at first order into two counter-propagating waves, and the scalar models derived above describe the evolution of each of these two components. In dimension $d=2$, there is no such splitting and therefore no direct generalization of the above. It is however possible to consider \emph{weakly transverse} waves for which the scale of the dependance on the transverse direction $y$ is larger. More precisely, it is possible to consider waves of the form
$$
\zeta(t,x,y)=\underline{\zeta}(t,x,\sqrt{\mu} y)
$$
for some profile function $\underline{\zeta}$. The fact that the transverse dependence is weaker allows the one dimensional splitting to operate before the transverse dependence is felt: the waves split up into two counter propagating components, each of them described by a scalar equation on $\underline{\zeta}$. The most famous weakly transverse equation is certainly the Kadomtsev-Petviashvili (KP) equation, which is the weakly transverse generalization of the KdV equation and is given by
\begin{equation}\label{KP}
\dt \underline{\zeta}+\dx \underline{\zeta}+\mu \frac{1}{2}\dx^{-1}\partial_Y^2\underline{\zeta} +\mu \frac{1}{6}\dx^3 \underline{\zeta} +\eps \frac{3}{2}\underline{\zeta}\dx \underline{\zeta}=0,
\end{equation}
where $Y$ stands for the variable $\sqrt{\mu}y$. We refer to \cite{LannesSaut,Lannes2013} and references therein for the derivation and justification of this model and to \cite{klein2012numerical,KleinSaut} for recent reviews of related mathematical issues. Of course, weakly transverse generalizations of the other scalar models are possible along the same lines; for instance, a weakly transverse generalization of the Whitham equation \eqref{Whitham} was proposed in \cite{Lannes2013,lannes2013remarks} and reads
\begin{equation}\label{Whitham2D_tr}
\dt \ovv + c_{\rm WW}(\abs{D^\mu}) \big( 1+\mu \frac{D_x^2}{D_1^2}\big)^{1/2}\dx \ovv+\frac{3}{2}\eps \ovv\dx\ovv=0,
\end{equation}
where $c_{\rm WW}$ is the same function as in \eqref{Whitham} and $\abs{D^\mu}$ is the Fourier multiplier $\abs{D^\mu}=\big( D_x^2 +\mu D_y^2 \big)^{1/2}$. One readily recovers \eqref{KP} from \eqref{Whitham2D_tr} by a formal Taylor expansion at $\mu=0$. It is also of interest to remark that the singularity due to the operator $\dx^{-1}$ in KP is removed in \eqref{Whitham2D_tr} --this singularity comes by the way from the underlying linear wave equation, and is therefore not specific to the dispersive or nonlinear terms of the KdV equation.

\subsection{Justification procedure}\label{sectjustif}

Except for the IK model for which we explained how to rigorously justify the approximation, the asymptotic models derived in the previous sections have been derived somewhat formally. Indeed, when we wrote, for instance, that a residual was of size $O(\mu^2)$, we did not precisely define the meaning of this notation. In the case of a flat bottom, the solutions of the exact equations \eqref{formzetaQ3_ND_V} depend on the two parameters $\eps$ and $\mu$ and, implicitly, we used the notation $R_{\eps,\mu}=O(\mu^2)$ with the following meaning
$$
 \exists C>0, \forall \mu\in  (0,1), \forall \eps\in (0,1), \qquad \frac{1}{\mu^2} \Vert R_{\eps,\mu} \Vert_{{\mathbb A}}\leq C \Vert \zeta_{\eps,\mu} , \ovV_{\eps,\mu} \Vert_{{\mathbb B}}
$$
(in the weakly nonlinear scaling one should additionally impose $\eps\leq c\mu$ for some constant $c>0$) where ${\mathbb A}$ and ${\mathbb B}$ are two functional spaces, for instance ${\mathbb A}=L^\infty(0,T/\eps;H^{s_1}(\R^d))$ and ${\mathbb B}=W^{k,\infty}(0,T/\eps;H^{s_2}(\R^d))$ with $k\geq 0$, $0\leq s_1\leq s_2$, and where $(\zeta_{\eps,\mu},\ovV_{\eps,\mu})$ stands for an exact solution of \eqref{formzetaQ3_ND_V} (the subscript $\eps,\mu$ has been added here to make the dependance on these two parameters explicit). Of course, this definition does not ensure in general that $R_{\eps,\mu}=O(\mu^2)$ in ${\mathbb A}$ with the standard meaning of this notation, namely,
$$
R_{\eps,\mu}=O(\mu^2)\iff   \exists \widetilde{C}>0,  \forall \eps,\mu \in (0,1), \qquad \frac{1}{\mu^2} \Vert R_{\eps,\mu} \Vert_{{\mathbb A}}\leq \widetilde{C}
$$
In order for this latter property to be true, one must show that {\it the family $(\zeta_{\eps,\mu},\ovV_{\eps,\mu})_{\eps,\mu}$ is uniformly bounded in ${\mathbb B}$} for $\eps,\mu\in (0,1)$. This is in general the most difficult part of the justification procedure that requires special care; for instance well-posedness results based on microlocal analysis such as \cite{Alazard2014} provide a very precise information on the minimal regularity required for local well-posedness of the water waves equations, but do not provide such uniform bounds. The necessary uniform bounds for a time scale of order $O(1/\eps)$ which is the physically relevant one were proved in \cite{Alvarez} and \cite{Iguchi2009} (see also \cite{Lannes2013}). With such results at hand, the above computations show that the solutions of the water waves equations are \emph{consistent} with the asymptotic models, in the sense that they solve them up to a small residual. 

In order to obtain a \emph{convergence} result, the last step is to prove the local well-posedness of the asymptotic model under consideration, and the stability of the solutions with respects to perturbations: typically, if $u$ solves an asymptotic model up to a residual of order $O(\mu^p)$ then it is $O(\mu^p t)$ close to the exact solution of this model with same initial data on the time scale for which the stability result holds (typically $O(1/\eps)$). Such results are usually much easier to obtain than the uniform bounds for the water waves equations, and provide the expected convergence result: the solutions to the water waves equations are close (how close depending on the precision of the model) to the solutions to the asymptotic model under consideration. We do not give too much details on these aspects which are treated in great generality and detail in \cite{Lannes2013}.

\section{Extension to rotational flows}\label{sectrot}

The goal of this section is to show how to generalize the results of Section \ref{sect2} when non zero vorticity is allowed, that is, when assumption \eqref{Euler3} is removed from the basic equations. We first show how to generalize  the Zakharov-Craig-Sulem formulation of the water waves equation \eqref{ZCS} as well as the elevation-discharge formulation \eqref{formzetaQ3} when vorticity is present. In order to introduce the dimensionless version of these equations, the notion of {\it strength} of the vorticity needs to be introduced. For most of this section, we consider a strength $\alpha=1/2$ which is the largest one for which we have rigorous bounds that allow the justification of the asymptotic models along a procedure similar to the one described in \S \ref{sectjustif} for the irrotational case. As in the irrotational case, an asymptotic description of the velocity and pressure field in the fluid domain is needed in order to understand the contribution of the turbulent and non-hydrostatic components in the averaged Euler equations \eqref{formzetaQ3_ND_V}; this analysis is performed in \S \ref{sectinnervort}. The incidence on the NSW and SGN models is then discussed in \S \ref{sectNSWSGNvort}; it is in particular shown that the SGN equations must be extended by a third equation on some turbulent tensor. This extended model can serve as a basis for the modeling of wave breaking, provided that some ad hoc mechanism is added to the equations; this is done in \S \ref{WB}. Finally, several models are formally derived in the presence of a big vorticity in \S \ref{sectlargevort}.

\subsection{The water waves equations with vorticity}\label{sectWWrot}

If one wants to take vorticity effects into account, it is necessary to remove the assumption \eqref{Euler3} from the water waves equations \eqref{Euler1}-\eqref{Euler6}. The vorticity ${\bf \omega}:=\curl \bU$ is therefore not identically equal to zero and satisfies instead the vorticity equation
\begin{equation}\label{vorteq}
\dt \bom +\bU\cdot \nabla_{X,z} \bom =\bom\cdot \nabla_{X,z}\bU
\end{equation}
(we treat here the case $d=2$, the adaptation to the case $d=1$ being straightforward). We show here how to generalize, in the presence of vorticity, the two formulations of the water waves equations considered in these notes, namely, the Zakharov-Craig-Sulem formulation \eqref{ZCS} and the elevation-discharge formulation \eqref{formzetaQ3}. The dimensionless version of these equations is then given and the notion of {\it strength} of the vorticity is introduced.

\subsubsection{The generalized Zakharov-Craig-Sulem formulation in the presence of vorticity}

The Zakharov-Craig-Sulem (ZCS) equations \eqref{ZCS} are a system of two evolution equations on $\zeta$ and $\psi$, this latter quantity being defined as the trace at the surface of the velocity potential $\Phi$ defined by the relation $\bU=\nabla_{X,z} \Phi$. This relation being a reformulation of the irrotationality assumption \eqref{Euler3}, there is no direct generalization of the (ZCS) equations in the presence of vorticity.

Instead, it was noticed in \cite{CastroLannes1} that, in the irrotational framework, one has
$$
\nabla\psi=\underline{V}+\underline{w}\nabla \zeta,
$$
where $\underline{V}$ and $\underline{w}$ respectively denote the horizontal and vertical component of the velocity field evaluated at the surface of the fluid domain. We can therefore seek directly an equation on
\begin{align*}
U_\parallel&:=\underline{V}+\underline{w}\nabla \zeta \\
&=\big(\underline{U}\times N\big)_{\rm h},
\end{align*}
the subscript ${\rm h}$ denoting the horizontal component. Taking the trace of the Euler equation \eqref{Euler1} at the surface and taking the horizontal component of the cross product of the resulting equation with $N$, one arrives after some computations at
$$
\dt U_\parallel+ g \nabla \zeta+\frac{1}{2} \nabla \abs{U_\parallel}^2 -\frac{1}{2} \nabla \big( (1+\abs{\nabla\zeta}^2)\underline{w}^2\big)=-\underline{\omega}\cdot N \uV^\perp,
$$
where we also used the fact that since the pressure $P$ is constant at the surface, $(\nabla_{X,z} P)_{\vert_{z=\zeta}} \times N=0$. Denoting by $\Pi$ and $\Pi_\perp$ the orthogonal projectors onto gradient and orthogonal gradient vector fields,
$$
\Pi=- \frac{\nabla \nabla^{\rm T}}{\Delta}, \qquad \Pi_\perp=- \frac{\nabla^\perp  (\nabla^\perp)^{\rm T}}{\Delta},
$$
we can decompose $U_\parallel$ as
$$
U_\parallel= \Pi U_\parallel +\Pi_\perp U_\parallel=\nabla\psi+\nabla\widetilde{\psi}
$$
for some scalar functions $\psi$ and $\widetilde{\psi}$. Remarking that $\Delta \widetilde{\psi}=\underline{\omega}\cdot N$, there is no need to derive an equation for $\widetilde{\psi}$. For $\psi$ however, such an equation is necessary, and it is obtained by applying $\Pi$ to the above evolution equation on $U_\parallel$. We can now state the extended Zakharov-Craig-Sulem formulation in the presence of vorticity
\begin{equation}\label{ZCSvort}
\begin{cases}
\dt \zeta +\underline{V}\cdot \nabla \zeta -\underline{w}=0,\\
\dt \psi + g  \zeta+\frac{1}{2}  \abs{U_\parallel}^2 -\frac{1}{2}  \big( (1+\abs{\nabla\zeta}^2)\underline{w}^2\big)= \frac{\nabla^{\rm T}}{\Delta} (\underline{\omega}\cdot N \uV^\perp),\\
\dt \bom +\bU\cdot \nabla_{X,z} \bom =\bom\cdot \nabla_{X,z}\bU,
\end{cases}
\end{equation}
which is a closed system of equations in $(\zeta,\psi,\bom)$ in the sense that it is possible to reconstruct the full velocity field $\bU$ (and a fortiori its trace $\underline{U}$ at the surface) in terms of these three quantities. The derivation and mathematical analysis (local well-posedness, Hamiltonian structure, uniform bounds, shallow water asymptotics, etc.) of this formulation can be found in \cite{CastroLannes1}. A generalization of this formulation in the presence of a Coriolis force can also be found in \cite{Melinand2017}.

\subsubsection{The generalized elevation-discharge formulation in the presence of vorticity}

The derivation of the averaged Euler equations \eqref{formzetaQ3} did not require the irrotationality assumption \eqref{Euler3} and are therefore still valid in the presence of vorticity,
\begin{equation}\label{formzetaQ3vort}
\begin{cases}
\dsp \dt \zeta+\nabla\cdot Q=0,\\
\dsp \dt Q+\nabla\cdot(\frac{1}{h}Q\otimes Q)+gh\nabla \zeta+\nabla\cdot\Rey+\frac{1}{\rho}\int_{-h_0+b}^\zeta \nabla P_{\rm NH}=0,
\end{cases}
\end{equation}
with $P_{\rm NH}$ and ${\bf R}$ still defined by \eqref{PNH} and \eqref{defR} respectively. The difference with the irrotational case is that there is no such thing as Proposition \ref{propclosed}, i.e., these equations do not form a closed set of evolution equations in $(\zeta,Q)$.  A possible generalization would be to prove that \eqref{formzetaQ3vort} and the vorticity equation \eqref{vorteq} form a closed set of equations in $(\zeta,Q,\bom)$. From the definition of $P_{\rm NH}$ and ${\bf R}$, this would require to generalize the reconstruction mapping of Proposition  \ref{propclosed}  by a mapping ${\mathfrak R}[\zeta]: (Q,\bom) \mapsto \bU$ where $\bU=(V^{\rm T},w)^{\rm T}$ satisfies
$$
\int_{-h_0+b }^{\zeta} V=Q, \qquad N_{\rm b}\cdot U_{\rm b}=0, \qquad \curl \bU= \bom, \qquad {\rm div}\, \bU=0.
$$

\subsubsection{Nondimensionalized equations and well-posedness of the equations}

Proceeding as in \S \ref{sectnondim}, and with the same notations, it is possible to derive a dimensionless version of \eqref{ZCSvort} and \eqref{formzetaQ3vort} provided that we define the {\it strength of the vorticity}. An important effect of the vorticity is that it induces a vertical shear;  recalling that the vertical variable is scaled by  $h_0$, that   the horizontal velocity $V$ is scaled by $a \sqrt{g/h_0}$, a typical scale to measure this shear is the natural scale of $\dz V$, namely  $\Omega_0=a/h_0  \sqrt{g/h_0}$. This motivates the following definition
\begin{equation}\label{strength}
\mbox{The vorticity is of strength }\alpha>0\mbox{ if } \Omega_0^{-1}\curl \bU =O(\mu^\alpha).
\end{equation}
Omitting the tildes for dimensionless quantities and defining
\begin{equation}\label{defomegamu}
\bom_\mu = \left( \begin{array}{c}
\mu^{-\alpha}\big( \dz V^\perp -\nabla^\perp w) \\
-\mu^{1/2-\alpha}\nabla \cdot V^\perp
 \end{array}\right)
\end{equation}
this means that $\bom_\mu$ is a $O(1)$ quantity with respect to $\mu$. The time evolution of $\bom_\mu$ is directly given by the non dimensionalization of \eqref{vorteq}, 
\begin{equation}\label{vortND}
\dt \bom_\mu +\frac{\eps}{\mu} \bU^\mu\cdot \nabla^\mu \bom_\mu=\frac{\eps}{\mu}\bom_\mu\cdot \nabla^\mu \bU^\mu
\end{equation}
where $\bU^\mu=\left(\begin{array}{c} \sqrt{\mu} V \\ w \end{array}\right)$ and  $\nabla^\mu=\left(\begin{array}{c} \sqrt{\mu} \nabla \\ \dz \end{array}\right)$ --note in particular that $\bom^\mu=\mu^{3/2-\alpha}\nabla^\mu \times \bU^\mu$.

We shall mainly consider throughout these notes the case $\alpha=1/2$, which is the smallest value of $\alpha$ (and therefore the strongest vorticity) for which it is known that the nondimensionalized generalized ZCS equations \eqref{ZCSvort} are well-posed over a time $O(1/\eps)$ and uniformly with respect to $\mu \leq 1$. This result, proved in \cite{CastroLannes1}, ensures that all the asymptotic expansions performed in \S\ref{sectinnervort} and \S \ref{sectNSWSGNvort} are justified.

Extending such a result to larger vorticities (i.e. to smaller values of $\alpha$) is still an open problem, but it is however possible to derive some asymptotic models in such regimes, as shall be done in \S \ref{sectlargevort}.

\subsection{The inner structure of the velocity and pressure fields in the presence of vorticity} \label{sectinnervort}

{\it For the reasons explained above, we consider here a vorticity of strength $\alpha=1/2$}, in the sense of \eqref{strength}. As in \S \ref{sectinner} for the irrotational case, it is possible to  describe the inner structure of the velocity field in shallow water in the presence of vorticity. With
the nondimensionalization \eqref{vortND}, the relations \eqref{liste} that were used in the irrotational case must be replaced by
\begin{equation}\label{listevort}
\begin{cases}
\mu\nabla\cdot V+\dz {w}=0,\\
\sqrt{\mu}\dz V-\sqrt{\mu}\nabla w=-\mu\bom_{\mu,{\rm h}}^\perp,\\
\mu\nabla^\perp \cdot V=\mu \bom_{\mu,{\rm v}},\\
w_b-\beta \mu \nabla b\cdot V_b=0.
\end{cases}
\end{equation}
where $\bom$ is as defined in \eqref{defomegamu} (with $\alpha=1/2$) and $\bom_{\rm h}$ and $\bom_{\rm v}$ denotes its horizontal and vertical components.
The first and last equations can be used to obtain
$$
w=-\mu\nabla\cdot \big[ (1+z-\beta b)\overline{V}\big]-\mu\nabla\cdot \int_{-1+\beta b}^z V^*,
$$
which is the same relation as in the irrotational case. The influence of the vorticity appears when we plug this relation into the second equation, leading to
\begin{align*}
V^*
=&\mu \Big( \int_z^{\eps\zeta} \nabla\nabla\cdot \big[(1+z'-\beta b)\overline{V}\big]{\rm d}z'\Big)^*+\mu \Big(\int_{z}^{\eps\zeta}\nabla\nabla\cdot \int_{-1+\beta b}^z V^* \Big)^*\\
&+\sqrt{\mu}\Big(\int_z^{\eps\zeta}\bom_{\mu,{\rm h}}^\perp\Big)^*
\end{align*}
(this expression differs from the corresponding irrotational one by the presence of the last term). Defining ${\bf T}[\eps\zeta,\beta b]$ and ${\bf T}^*[\eps\zeta,\beta b]$ as in \eqref{defTT}, we can write in condensed form
$$
(1-\mu {\bf T}^*)V^*=\sqrt{\mu} V_{\rm sh}^*+\mu {\bf T}^* \overline{V}
$$
where $V_{\rm sh}$ is the shear velocity created by the vorticity,
$$
V_{\rm sh}=\int_z^{\eps\zeta}\bom_{\mu,{\rm h}}^\perp.
$$
so that 
\begin{align*}
V^*&=\sqrt{\mu} V_{\rm sh}^*+\mu {\bf T}^* \overline{V}+\mu^{3/2} T^* V^*_{\rm sh}+O(\mu^2).
\end{align*}
This shows that  {\it the fluctuation of the horizontal velocity around its average is mainly due to the influence of the vorticity}, which contributes at order $O(1/\sqrt{\mu})$ while the dispersion associated to the (irrotational) nonlocal effects only contributes at order $O(1/\mu)$. The shallow water expansion of the velocity field in the presence of vorticity is therefore given when the bottom is flat by
\begin{equation}\label{expinnervelvort}
\begin{cases}
V=\overline{V}+\sqrt{\mu}V_{\rm sh}^*- \frac{1}{2}\mu \big(  (1+z)^2-\frac{1}{3} h^2 \big) \nabla\nabla\cdot \overline{V}+\mu^{3/2} T^* V^*_{\rm sh}+O(\mu^2),\\
w=-\mu (1+z)\nabla\cdot \overline{V}-\mu^{3/2}\nabla\cdot \int_{-1}^z V_{\rm sh}^*+O(\mu^2);
\end{cases}
\end{equation}
the generalization to non flat bottoms is given in \eqref{expinnervelvorttopo}. Note that contrary to what happens for the horizontal velocity, the contribution of the vorticity to the vertical velocity is smaller than the irrotational contribution (and this remains true for larger vorticities).

As in \S \ref{sectNH}, plugging these approximations into the formula for the non hydrostatic pressure, namely,
$$
\frac{1}{\eps}P_{\rm NH}= \int_z^{\eps \zeta} \big( \dt w +\eps V\cdot \nabla w +\frac{\eps}{\mu}w \dz w \big)
$$
to obtain an asymptotic expression of the non hydrostatic pressure field in the fluid domain. One easily checks that the new vorticity terms contribute to order $O(\mu^{3/2})$, so that the expansion \eqref{expP} derived in the irrotational framework remains valid, but with a residual term of order $O(\mu^{3/2})$ instead of $O(\mu^2)$,
\begin{align}
\label{expProt}
\frac{1}{\eps} P_{\rm NH}&=-\mu\big[ \frac{h^2}{2}-\frac{(1+z)^2}{2}\big]  \big(\dt +\eps \ovV\cdot \nabla -\eps\nabla\cdot \ovV\big)\nabla\cdot\ovV
+O(\mu^{3/2});
\end{align}
(similarly, when the bottom is not flat, \eqref{expPtopo} still holds, but with a residual of order $O(\mu^{3/2})$ instead of $O(\mu^2)$).
\begin{remark}
Of course, plugging \eqref{expinnervelvort} into the above formula for $P_{\rm NH}$, one can get a more precise expansion, up to order $O(\mu^2)$. The additional terms are quite complicated however, and for the sake of clarity, we chose here to limit our analysis to a $O(\mu^{3/2})$ precision; we refer to \cite{CastroLannes2} for the full $O(\mu^2)$ expansion.
\end{remark}
Note finally that even though the vorticity does not appear in \eqref{expProt}, it plays a role in the evolution of $\zeta$ and $\ovV$. It is therefore not surprising that the reconstruction of the surface elevation from pressure measurements is more complex in the presence of vorticity, since it is then more complicated to replace space derivatives by time derivatives or more generally Fourier multipliers in time (indeed, a pressure sensor does not provide any information on the variations in space of the pressure). This has been done only in some particular cases such as solitary waves \cite{Henry} and linear plane waves \cite{HenryThomas}.

\subsection{The NSW and SGN equations in the presence of vorticity}\label{sectNSWSGNvort}

{\it We remind that we consider here a vorticity of strength $\alpha=1/2$}, in the sense of \eqref{strength}. The  "turbulent" and non-hydrostatic terms in \eqref{formzetaQ3_ND} can be expended as follows, following the results of \S \ref{sectinnervort},
\begin{align*}
\eps\nabla\cdot \Rey&=\eps \mu \nabla\cdot {\bf E}+O(\eps\mu^{3/2}) \\
  \frac{1}{\eps}\int_{-1}^{\eps\zeta}\nabla P_{\rm NH}&=
 \mu h  {\mathcal T}\big[\dt \ovV+\nabla\cdot \big(h  \ovV\otimes \ovV\big)\big] +\mu \eps h{\mathcal Q}_1\big(\zeta, \ovV\big)
 +O(\eps \mu^{3/2}),
\end{align*}
where the symmetric tensor ${\bf E}$ measures the quadratic self interaction of the fluctuation $V^*_{\rm sh}$ of the shear velocity $V_{\rm sh}$ created by the vorticity,
$$
{\bf E}=\int_{-1}^{\eps \zeta} V^*_{\rm sh}\otimes V^*_{\rm sh}.
$$
Therefore, for large amplitude waves $\eps=O(1)$, the contribution of the vorticity term to the averaged Euler equations  due to the "turbulent" term $\eps \nabla\cdot {\bf E}$, which is of size $O(\eps\mu)$, is larger than the rotational part of the non hydrostatic pressure, which is of size $O(\mu^{3/2})$. In the weakly nonlinear regime \eqref{weakNL}, i.e. if $\eps=O(\mu)$, this is the opposite situation. Both contribution are of equal order in the medium amplitude regime $\eps=O(\sqrt{\mu})$. 
\begin{remark}
As explained above, we work here with a $O(\mu^{3/2})$ precision instead of the $O(\mu^2)$ precision used for the SGN equation in the irrotational case. The computations are pushed further in \cite{CastroLannes2} to keep the $O(\mu^2)$ precision. It is in particular shown that new turbulent terms appear at order $O(\eps\mu^{3/2})$, and the $O(\mu^{3/2})$  terms of the  non hydrostatic pressure are also computed explicitly.
\end{remark}

Let us now consider the consequences of this new "turbulent" term on the Nonlinear Shallow Water and Serre-Green-Naghdi models.

\subsubsection{The NSW equations in the presence of vorticity}

As seen above, the first contribution of the rotational terms to the averaged Euler equations is of size $O(\eps\mu)$, which is below the $O(\mu)$ precision of the NSW equations \eqref{eqNSWop}. Therefore, {\it in the presence of vorticity, the NSW equations \eqref{eqNSWop} still furnish a $O(\mu)$ approximation to the (rotational) water waves equations.}

If the dynamics of the surface elevation $\zeta$ and of the average velocity $\ovV$ are not affected by the vorticity, this does not mean that there are no rotational effects at all. For instance, in the irrotational setting, the horizontal velocity is independent of the vertical coordinate, see \eqref{expinnervel}, so that the horizontal velocity at the surface is well approximated by the average velocity,
$$
 \underline{V}(t,x)=\ovV(t,X)+O(\mu)
 \quad \mbox{ where }\quad
 \underline{V}(t,x):=V\big(t,X,\eps \zeta(t,X)\big).
 $$
 As shown by \eqref{expinnervelvort}, a corrective term must be added to this approximation if one wants to keep the same precision, namely,
 $$
  \underline{V}(t,x)=\ovV(t,X)- \sqrt{\mu} \frac{1}{h}\int_{-1}^{\eps \zeta} \int_z^{\eps \zeta }\bom_{\mu,{\rm sh}}^\perp+O(\mu)
 $$
 (the corrective term being equal to $V^*_{\rm sh}$ evaluated at the surface); if one is interested, say, in the motion of  drifters at the surface in a zone with background currents, this corrective term should be added to the velocity furnished by the NSW equations.
 
\subsubsection{The SGN equations in the presence of vorticity}
Plugging the above expansions into the averaged Euler equations \eqref{formzetaQ3_ND} and dropping the $O(\mu^{3/2})$ terms, one obtains the same SGN equations as in \eqref{SGN} but with the additional "turbulent" term $\eps \mu \nabla\cdot {\bf E}$ in the momentum equation (or $\eps\mu \frac{1}{h}\nabla\cdot {\bf E}$ if we work with the formulation in $(\zeta,\ovV)$ variables \eqref{SGN_V}). The difficulty here is that ${\bf E}$ is not a function of $\zeta$ and $Q$ but of the horizontal component of the vorticity $\bom_{\mu,{\rm h}}$ (through $V_{\rm sh}$). In order to compute it, it seems therefore necessary to solve the vorticity equation \eqref{vortND} which is an equation cast in the fluid domain which is $d+1$ dimensional (while the SGN equation are cast on $\R^d$); solving this equation would be essentially as challenging numerically as solving the full free surface Euler equations. Fortunately, it happens that it is possible to derive an equation satisfied by ${\bf E}$ on $\R^d$; after some computations (see \cite{CastroLannes2}), one gets that up to $O(\eps\sqrt{\mu})$ terms, the symmetric tensor ${\bf E}$ solves the equation
\begin{equation}\label{defeqE}
\dt {\bf E}+\eps \ovV\cdot \nabla{\bf E}+\eps \nabla\cdot \ovV {\bf E}+\eps \nabla \ovV^{\rm T}{\bf E}+\eps {\bf E}\nabla \ovV=0.
\end{equation}
The conclusion is that {\it the presence of the vorticity can be taken into account in the SGN equations without having to solve the vorticity equation} \eqref{vortND} but by extending the SGN equations by a third coupled evolution equation on ${\bf E}$. The SGN equations in $(\zeta,\ovV)$ variables \eqref{SGN_V} then become
\begin{equation}\label{SGNvort}
\begin{cases}
\dt \zeta +\nabla\cdot (h\ovV)=0,\\
(1+\mu {\mathcal T})\big[ \dt \ovV +\eps \ovV\cdot \nabla \ovV \big] +\nabla\zeta +\eps \mu  {\mathcal Q}_1 (h ,\ovV)+ \eps \mu \frac{1}{h}\nabla\cdot {\bf E}=0,\\
\dt {\bf E}+\eps \ovV\cdot \nabla{\bf E}+\eps \nabla\cdot \ovV {\bf E}+\eps \nabla \ovV^{\rm T}{\bf E}+\eps {\bf E}\nabla \ovV=0.
\end{cases}
\end{equation}
\begin{remark}\label{remSGNvort}
Contrary to \eqref{SGN_V} which are precise up to $O(\mu^2)$ terms, the above equations are precise up to $O(\eps\mu^{3/2})$ terms only. The $O(\mu^2)$ precision is reached in \cite{CastroLannes2}, but the equations \eqref{SGNvort} must be further extended by two other coupled evolution equations: one is  the third order turbulent tensor ${\bf F}$ and the other one is the second order momentum  $V^\sharp$ of the fluctuation of the shear velocity,
$$
{\bf F}=\int_{-1}^{\eps \zeta} V^*_{\rm sh}\otimes V^*_{\rm sh}\otimes V^*_{\rm sh}
\quad\mbox{ and }\quad
V^\sharp=\frac{12}{h^3}\int_{-1}^{\eps \zeta} (z+1)^2 V_{\rm sh}^* .
$$
We also refer to \cite{CastroLannes2} for generalization to non flat topographies.
\end{remark}
\begin{remark}
In the one dimensional case, the vorticity is of the form $\bom_\mu=(0,\omega,0)^T$ and the case of a constant vorticity $\omega=\omega_0={\rm const}$ is of particular interest. Indeed, one then has ${\bf E}=\frac{1}{12} h^3 \omega_0^2$ and one can check that the equation on ${\bf E}$ reduces to the mass conservation equation. The system \eqref{SGN} reduces therefore to a system of two equations, namely,
$$
\begin{cases}
\dt \zeta +\dx (h\ovv)=0,\\
\big(1-\frac{\mu}{3} \dx (h^3 \dx \cdot)\big )\big[ \dt \ovv +\eps \ovv \dx  \ovv \big] +\dx \zeta +\eps \mu \frac{2}{3h} \dx \big(h^3 (\dx \ovv)^2 \big) + \eps^2 \mu \frac{\omega_0^2}{4 } h\dx \zeta =0
\end{cases}
$$
which can be related to the system derived in \cite{Choi:2003fv} for the propagation of waves above a constant shear through a change of variable for the velocity (the averaged velocity considered in \cite{Choi:2003fv} is not the vertical average of the full horizontal velocity but of its difference with the constant shear).
\end{remark}

Some qualitative analysis (e.g. existence of solitary waves) and numerical simulations have been performed in \cite{LannesMarche} for the one dimensional version of \eqref{SGNvort} as well as for some of the more complex models mentioned in Remark \ref{remSGNvort} but the mathematical analysis of these models remains to be done.

 As shown in \cite{CastroLannes2}, the equations \eqref{SGNvort} admit a local conservation of energy, the energy density being here the sum of the energy density associated to the irrotational SGN equations and of a rotation (or turbulent) energy ${\mathfrak e}_{\rm rot}$; a similar correction must also be made for the energy flux, so that \eqref{locNRJNSW} becomes
\begin{equation}\label{locNRJNSWrot}
\dt \big({\mathfrak e}_{\rm NSW}+{\mathfrak e}_{\rm rot}\big)+ \nabla\cdot \big( {\mathfrak F}_{\rm NSW}+ {\mathfrak F}_{\rm rot}\big)=0,
\end{equation}
where 
$$
{\mathfrak e}_{\rm rot}=\frac{1}{2}{\rm Tr}{\bf E}
\quad \mbox{ and }\quad
{\mathfrak F}_{\rm rot}=\frac{1}{2}{\rm Tr}{\bf E}\ovV+{\bf E}\ovV.
$$
Therefore, there is local conservation of the total energy, which is the sum of the irrotational one ${\mathfrak e}_{\rm SGN}$ and of  the rotational one ${\mathfrak e}_{\rm rot}$. There can therefore be a transfer of energy between both quantities. It is therefore tempting to try to model wave breaking --during which the mechanical energy (i.e. the sum of the potential and kinetic energies)  of the waves is dissipated-- by a mechanism that would ensure such a transfer to the turbulent energy at wave breaking.

\subsection{Wave breaking and enstrophy creation}\label{WB}

The derivation of \eqref{SGNvort} is rigorously justified by the uniform bounds derived in \cite{CastroLannes1} on the solution of \eqref{ZCSvort}. This rigorous approach breaks down when singularities form, and in particular when wave breaking occurs. The models proposed below are therefore far from being mathematically justified and comparison with experimental observations is at this day the best way to validate them.

Various formal approaches have been proposed to extend the range of application of SGN types models to realistic physical configurations in coastal oceanography, where wave breaking obviously has to be taken into account. It has for instance been proposed to switch locally (in the vicinity of wave breaking) from the SGN equations to the NSW equation  \cite{tonelli,BCLM,filippini,duran2017}, and to treat wave breaking as shocks  (see \S \ref{sectweakNSW}), a difficulty being to find good "breaking criteria" telling us when to switch to and back the NSW equation \cite{Tissier}. Another common approach (see for instance \cite{schaffer,svendsen, Kennedy}) is to model wave breaking by the addition of an eddy viscosity near wave breaking. Here again, one needs a "breaking criterion" to tell us when and where to add this eddy viscosity, and one must also propose an expression for this eddy viscosity, which can for instance be based on hyperbolic shock wave theory \cite{Guermond} or other physical considerations \cite{Musumeci}. We refer to \cite{brocchini,kazolea2} for surveys on these questions.

There is an intense research activity around these topics and at this day, no conclusive solution has been found. A seductive approach based on a series of works \cite{RG1,RG2,RG3,Kazakova2,Duran} is based on the idea mentioned above of a transfer mechanism between mechanical and turbulent energy. We describe this approach (and more specifically \cite{Kazakova2,Duran}) below with the formalism developed throughout these notes. For the sake of clarity, we stick here to the one dimensional case and a flat bottom.

To start with, let us rewrite \eqref{SGNvort} in dimension $d=1$; the turbulent tensor ${\bf E}$ is then a scalar, denoted $E$ and it is convenient to introduce, as in \cite{RG1,RG2} the enstrophy $\varphi=\frac{1}{h^3}E$, so that \eqref{SGNvort} can be written
$$
\begin{cases}
\dt \zeta +\dx (h\ovv)=0,\\
(1+\mu {\mathcal T})\big[ \dt \ovv+\eps \ovv\dx\ovv  \big] +\dx\zeta +\eps \mu  {\mathcal Q}_1 (h ,\ovv)+ \eps \mu \frac{1}{h}\dx (h^3 \varphi )  =0,\\
\dt (h\varphi)+\eps \dx (\ovv \varphi )=0.
\end{cases}
$$
The first step proposed in \cite{Kazakova2} is to add an eddy viscosity term in the last term of the momentum equation
$$
(1+\mu {\mathcal T})\big[ \dt \ovv+\eps \ovv\dx\ovv  \big] +\dx\zeta +\eps \mu  {\mathcal Q}_1 (h ,\ovv)+ \eps \mu \frac{1}{h}\dx (h^3 \varphi - \nu_{T} h \dx \ovv)  =0,
$$
where the eddy viscosity coefficient $\nu_{T}$ is discussed below and is a source of energy dissipation, as illustrated by the fact the the energy conservation law \eqref{locNRJNSWrot} becomes
$$
\dt \big({\mathfrak e}_{\rm NSW}+{\mathfrak e}_{\rm rot}\big)+ \dx\big( {\mathfrak F}_{\rm NSW}+ {\mathfrak F}_{\rm rot}\big)= - \eps \mu\nu_T h (\dx \ovv)^2;
$$
there is therefore a dissipation of the total energy while we rather want, at first order, a conservation of this total energy, and a transfer from the mechanical energy ${\mathfrak e}_{SGN}$ to the turbulent energy ${\mathfrak e}_{\rm rot}$. This can only be achieved through the creation of a corresponding source term in the equation for the enstrophy, namely,
$$
\dt (h\varphi)+\eps \dx (\ovv \varphi )=2 \eps \mu\nu_T \frac{1}{h} (\dx \ovv)^2;
$$
quite obviously, enstrophy (or, equivalently, turbulent energy), is created in the vicinity of wave breaking, where the gradient of the velocity becomes important; the mechanical energy of the wave is consequently decreased. This mechanism restores the local conservation of the total energy \eqref{locNRJNSWrot}. However, in a second step, the small scale dissipation of the total energy must be taken into account; there should therefore be a dissipation mechanism ${\mathcal D}$ such that
$$
\dt \big({\mathfrak e}_{\rm NSW}+{\mathfrak e}_{\rm rot}\big)+ \nabla\cdot \big( {\mathfrak F}_{\rm NSW}+ {\mathfrak F}_{\rm rot}\big)=-{\mathcal D}.
$$
Assuming that this dissipation mechanisms acts at the level of the turbulent energy, one must consequently modify the enstrophy equation that becomes
$$
\dt (h\varphi)+\eps \dx (\ovv \varphi )= \eps \mu\nu_T \frac{2}{h} (\dx \ovv)^2- \frac{2}{h}{\mathcal D}.
$$
The final equations then become
\begin{equation}\label{SGNvort1dtransf}
\begin{cases}
\dt \zeta +\dx (h\ovv)=0,\\
(1+\mu {\mathcal T})\big[ \dt \ovv+\eps \ovv\dx\ovv  \big] +\dx\zeta +\eps \mu  {\mathcal Q}_1 (h ,\ovv)+ \eps \mu \frac{1}{h}\dx (h^3 \varphi -\nu_T h \dx \overline{v})  =0,\\
\dt (h\varphi)+\eps \dx (\ovv \varphi )= \eps \mu\nu_T \frac{2}{h} (\dx \ovv)^2- \frac{2}{h}{\mathcal D}.
\end{cases}
\end{equation}

\begin{remark}
The derivation of \eqref{SGNvort1dtransf} relies on quite sound physical arguments but a good amount of physical modeling is still required to propose expressions for the eddy viscosity $\nu_T$ and the dissipation term ${\mathcal D}$. There is still no consensus regarding what these terms should be. For instance, $\nu_T=C_\nu h \sqrt{gh}$ is proposed in \cite{Musumeci} while \cite{Kazakova2} suggests expressions based on the enstrophy,
$$
\nu_T=C_p h^2 \sqrt{\varphi}
\quad\mbox{ and }\quad
{\mathcal D}=\frac{1}{2}C_r h^2 \varphi^{3/2},
$$
with $C_p$ and $C_r$ dimensionless numerical coefficients. A drawback of this last choice is that the enstrophy (or turbulent energy) stays equal to zero if it is initially zero, but good matching with experimental data are observed  in many cases \cite{Kazakova2,Duran}.
\end{remark}

\subsection{What about larger vorticities?}\label{sectlargevort}

We considered in the previous section SGN type models derived under the assumption of a vorticity strength $\alpha=1/2$, where we recall that the vortex strength is defined in  \eqref{strength}. This is the strongest vorticity for which bounds on the solutions to the rotational water waves equations \eqref{ZCSvort} have been established uniformly with respect to $\mu\in (0,1)$ and for times of order $O(1/\eps)$ \cite{CastroLannes1}. Owing to these uniform bounds, the asymptotic expansions of \S \ref{sectinnervort} and \S \ref{sectNSWSGNvort} are rigorously justified. In this section, we consider flows with a larger vorticity strength $0<\alpha<1/2$, not covered therefore by the theoretical bounds of \cite{CastroLannes1}. The derivation of the models derived below is therefore only a formal one.

The first step is to generalize the expansion \eqref{expinnervelvort} of the inner velocity field to the case of a vorticity strength $0<\alpha<1/2$; by simple computations, one finds,
\begin{equation}\label{expinnervelvortlarge}
\begin{cases}
V=\overline{V}+\mu^\alpha V_{\rm sh}^*- \frac{1}{2}\mu \big(  (1+z)^2-\frac{1}{3} h^2 \big) \nabla\nabla\cdot \overline{V}+O(\mu^{1+\alpha}),\\
w=-\mu (1+z)\nabla\cdot \overline{V}-\mu^{1+\alpha}\nabla\cdot \int_{-1}^z V_{\rm sh}^*+O(\mu^2);
\end{cases}
\end{equation}
it follows that the turbulent and non hydrostatic components of the averaged Euler equation satisfy
\begin{align*}
\eps \nabla\cdot \Rey&=\eps \mu^{2\alpha} \nabla\cdot {\bf E}+O(\eps\mu^{1+\alpha}) \\
  \frac{1}{\eps}\int_{-1}^{\eps\zeta}\nabla P_{\rm NH}&=
 \mu h  {\mathcal T}\big[\dt \ovV+\nabla\cdot \big(h  \ovV\otimes \ovV\big)\big] +\mu \eps h{\mathcal Q}_1\big(\zeta, \ovV\big)
 +O( \eps \mu^{1+\alpha}).
\end{align*}

We show below how the NSW and Boussinesq models, which were not affected by the presence of a vorticity of strength $\alpha=1/2$, have to be modified in the presence of a stronger vorticity.

\subsubsection{The NSW equations with a large vorticity}

Of particular interest is the analysis of the rotational effects on the NSW equations when $0<\alpha<1/2$. Indeed, plugging the above expansion into the averaged Euler equations \eqref{formzetaQ3V} and neglecting the $O(\mu)$ terms, one finds
$$
\begin{cases}
\dt h + \nabla\cdot (h \ovV)=0,\\
\dt \ovV +\eps \ovV\cdot \nabla \ovV + \nabla\zeta +\eps \mu^{2\alpha} \frac{1}{h}\nabla\cdot {\bf E}=0,\\
\dt {\bf E}+\eps \ovV\cdot \nabla{\bf E}+\eps \nabla\cdot \ovV {\bf E}+\eps \nabla \ovV^{\rm T}{\bf E}+\eps {\bf E}\nabla \ovV=0,
\end{cases}
$$
which are the equations derived in \cite{Gavrilyuk2012} to describe the conservative motion of compressible fluids. In its one dimensional version, it is also the first model on which a mechanism of creation of entropy has been added to model wave breaking \cite{RG1,RG2}.

\subsubsection{The Boussinesq equations with a large vorticity} \label{sectBoussinesqlarge}

Under the assumption \eqref{weakNL} of weak nonlinearity, we can plug the above expansions into \eqref{formzetaQ3V} and neglect the $O(\mu^2)$ terms to obtain
\begin{equation}\label{Boussinesqvort}
\begin{cases}
\dt \zeta+\nabla\cdot  (h \ovV)=0,\\
(1-\mu \frac{1}{3}\nabla \nabla^{\rm T})\dt \ovV +\eps \ovV\cdot \nabla \ovV +\nabla \zeta+\eps \mu^{2\alpha}\nabla\cdot {\bf E}=0,\\
\dt {\bf E}+\eps \ovV\cdot \nabla{\bf E}+\eps \nabla\cdot \ovV {\bf E}+\eps \nabla \ovV^{\rm T}{\bf E}+\eps {\bf E}\nabla \ovV=0.
\end{cases}
\end{equation}
\begin{remark}
Contrary to what has been done in \S \ref{sectBoussinesq} in the irrotational case, it is not possible here to replace $(1-\mu \frac{1}{3}\nabla \nabla^{\rm T})\dt \ovV $ in the second equation by the simpler expression $(1-\mu \frac{1}{3}\Delta )\dt \ovV $. Indeed, the quantity $\nabla^\perp \cdot \ovV$ is no longer small enough to perform such a substitution.
\end{remark}

\appendix
\section{Generalized formula when the topography is not flat}\label{Appbottom}

For the sake of clarity, in many cases, we provided in the main text formulas for a flat topography. We give here the generalization of these formulas when the bottom is not flat. 

First, in the presence of a non flat topography, the expansion \eqref{expinnervel} for an irrotational flow must be replaced by 
\begin{equation}\label{expinnerveltopo}
\begin{cases}
V=&\overline{V}-\frac{1}{2}\mu \big((1+z-\beta b)^2-\frac{1}{3}h^2\big)\nabla\nabla\cdot \overline{V}\\
&+\beta \big( z-\eps\zeta+\frac{1}{2}h \big)\big[\nabla b\cdot \nabla\ovV+\nabla(\nabla b\cdot\ovV )\big]+O(\mu^2),\\
w=&-\mu\nabla\cdot \big[ (1+z-\beta b)\overline{V}\big]+O(\mu^2);
\end{cases}
\end{equation}
and, similarly, for the description of the pressure field in the fluid domain we now have
\begin{align}
\nonumber
\frac{1}{\eps} P_{\rm NH}&=-\mu\big[ \frac{h^2}{2}-\frac{(1+z-\beta b)^2}{2}\big]  \big(\dt +\eps \ovV\cdot \nabla -\eps\nabla\cdot \ovV\big)\nabla\cdot\ovV\\
\label{expPtopo}
&+\mu(\eps \zeta-z) h (\dt +\eps \ovV\cdot \nabla)(\beta \nabla b\cdot \ovV)+O(\mu^2).
\end{align}

The same procedure as in \S \ref{sectSGN} then leads to the following SGN equations in the presence of topography,
\begin{equation}\label{SGNtopo}
\begin{cases}
\dt \zeta +\nabla\cdot Q=0,\\
(1+\mu {\bf T})\big[ \dt Q +\nabla\cdot \big( \frac{1}{h} Q\otimes Q \big) \big] +h\nabla\zeta +h {\mathcal Q}_1 (h ,\frac{Q}{h})=0,
\end{cases}
\end{equation}
where ${\bf T}=h {\mathcal T} \frac{1}{h}$
and 
\begin{align*}
{\mathcal T}V=& -\frac{1}{3h}\nabla\big( h^3 \nabla\cdot V \big)\\
&+\beta \frac{1}{2h}\big[ \nabla \big( h^2 \nabla b \cdot V \big)-h^2 \nabla b\nabla\cdot V \big]+\beta^2 \nabla b \nabla b \cdot V,
\end{align*}
while
$$
{\mathcal Q}_1(V)=-2 {\mathcal R}_1 \big(\dx V\cdot \dy V^\perp+(\nabla\cdot V)^2\big)+\beta {\mathcal R}_2\big( V\cdot (V\cdot \nabla)\nabla b \big)
$$
and
$$
{\mathcal R}_1 w=-\frac{1}{3h}\nabla (h^3 w)-\beta \frac{h}{2} w \nabla b, \qquad
{\mathcal R}_2 w = \frac{1}{2h}\nabla (h^2 w)+\beta w \nabla b.
$$

Finally, in the presence of a vorticity of strength $\alpha=1/2$, the expansion of the velocity field is
\begin{equation}\label{expinnervelvorttopo}
\begin{cases}
V=&\overline{V}+\sqrt{\mu} V_{\rm sh}^*-\frac{1}{2}\mu \big((1+z-\beta b)^2-\frac{1}{3}h^2\big)\nabla\nabla\cdot \overline{V}\\
&+\beta \big( z-\eps\zeta+\frac{1}{2}h \big)\big[\nabla b\cdot \nabla\ovV+\nabla(\nabla b\cdot\ovV )\big]+O(\mu^2),\\
w=&-\mu\nabla\cdot \big[ (1+z-\beta b)\overline{V}\big]-\mu^{3/2}\nabla\cdot \int_{-1+\beta b}^z V_{\rm sh}^*+O(\mu^2).
\end{cases}
\end{equation}
\bibliographystyle{acm}
\bibliography{NON}

\begin{thebibliography}{100}

\bibitem{abgrall2010}
{\sc Abgrall, R., and Karni, S.}
\newblock A comment on the computation of non-conservative products.
\newblock {\em Journal of Computational Physics 229}, 8 (2010), 2759--2763.

\bibitem{Adamy}
{\sc Adamy, K.}
\newblock Existence of solutions for a {B}oussinesq system on the half line and
  on a finite interval.
\newblock {\em Discrete \& Continuous Dynamical Systems-A 29}, 1 (2011),
  25--49.

\bibitem{Alazard2014}
{\sc Alazard, T., Burq, N., and Zuily, C.}
\newblock On the cauchy problem for gravity water waves.
\newblock {\em Inventiones mathematicae 198}, 1 (2014), 71--163.

\bibitem{AG}
{\sc Alinhac, S., and G{\'e}rard, P.}
\newblock {\em Op\'erateurs pseudo-diff\'erentiels et th\'eor\`eme de
  {N}ash-{M}oser}.
\newblock Savoirs Actuels. [Current Scholarship]. InterEditions, Paris, 1991.

\bibitem{Alvarez}
{\sc Alvarez-Samaniego, B., and Lannes, D.}
\newblock Large time existence for 3d water-waves and asymptotics.
\newblock {\em Invent. math. 171\/} (2008), 485--541.

\bibitem{Alvarez2}
{\sc Alvarez-Samaniego, B., and Lannes, D.}
\newblock A {N}ash-{M}oser theorem for singular evolution equations.
  {A}pplication to the {S}erre and {G}reen-{N}aghdi equations.
\newblock {\em Indiana Univ. Math. J. 57}, 1 (2008), 97--131.

\bibitem{Amick}
{\sc Amick, C.~J.}
\newblock Regularity and uniqueness of solutions to the {B}oussinesq system of
  equations.
\newblock {\em J. Differential Equations 54}, 2 (1984), 231--247.

\bibitem{Antonopoulos}
{\sc Antonopoulos, D.~C., Dougalis, V., and Mitsotakis, D.~E.}
\newblock Initial-boundary-value problems for the {B}ona-{S}mith family of
  {B}oussinesq systems.
\newblock {\em Advances in Differential Equations 14}, 1/2 (2009), 27--53.

\bibitem{audusse2011}
{\sc Audusse, E., Bristeau, M.-O., Perthame, B., and Sainte-Marie, J.}
\newblock A multilayer {S}aint-{V}enant system with mass exchanges for shallow
  water flows. derivation and numerical validation.
\newblock {\em ESAIM: Mathematical Modelling and Numerical Analysis 45}, 1
  (2011), 169--200.

\bibitem{bai2013dispersion}
{\sc Bai, Y., and Cheung, K.~F.}
\newblock Dispersion and nonlinearity of multi-layer non-hydrostatic
  free-surface flow.
\newblock {\em Journal of Fluid Mechanics 726\/} (2013), 226--260.

\bibitem{bai2015dispersion}
{\sc Bai, Y., and Cheung, K.~F.}
\newblock Dispersion and kinematics of multi-layer non-hydrostatic models.
\newblock {\em Ocean Modelling 92\/} (2015), 11--27.

\bibitem{Melinand}
{\sc Benjamin, M.}
\newblock A mathematical study of meteo and landslide tsunamis: The {P}roudman
  resonance.
\newblock {\em Nonlinearity 28}, 11 (2015), 4037.

\bibitem{BBM}
{\sc Benjamin, T.~B., Bona, J.~L., and Mahony, J.~J.}
\newblock Model equations for long waves in nonlinear dispersive systems.
\newblock {\em Philos. Trans. Roy. Soc. London Ser. A 272}, 1220 (1972),
  47--78.

\bibitem{BG}
{\sc Benzoni-Gavage, S., and Serre, D.}
\newblock {\em Multi-dimensional hyperbolic partial differential equations:
  First-order Systems and Applications}.
\newblock Oxford University Press on Demand, 2007.

\bibitem{BMN}
{\sc Besse, C., M{\'e}sognon-Gireau, B., and Noble, P.}
\newblock Artificial boundary conditions for the linearized
  {B}enjamin--{B}ona--{M}ahony equation.
\newblock {\em Numerische Mathematik 139}, 2 (2018), 281--314.

\bibitem{BNS}
{\sc Besse, C., Noble, P., and Sanchez, D.}
\newblock Discrete transparent boundary conditions for the mixed {KDV--BBM}
  equation.
\newblock {\em Journal of Computational Physics 345\/} (2017), 484--509.

\bibitem{bocchi2019return}
{\sc Bocchi, E.}
\newblock On the return to equilibrium problem for axisymmetric floating
  structures in shallow water.
\newblock {\em arXiv preprint arXiv:1901.04023\/} (2019).

\bibitem{BonaChen}
{\sc Bona, J.~L., and Chen, M.}
\newblock A {B}oussinesq system for two-way propagation of nonlinear dispersive
  waves.
\newblock {\em Physica D 116\/} (1998), 191--224.

\bibitem{BonaChenSaut1}
{\sc Bona, J.~L., Chen, M., and Saut, J.-C.}
\newblock Boussinesq equations and other systems for small-amplitude long waves
  in nonlinear dispersive media. {I}. {D}erivation and linear theory.
\newblock {\em J. Nonlinear Sci. 12}, 4 (2002), 283--318.

\bibitem{BonaChenSaut2}
{\sc Bona, J.~L., Chen, M., and Saut, J.-C.}
\newblock Boussinesq equations and other systems for small amplitude long waves
  in nonlinear dispersive media: {II}. the nonlinear theory.
\newblock {\em Nonlinearity 17\/} (2004), 925--952.

\bibitem{BonaColinLannes}
{\sc Bona, J.~L., Colin, T., and Lannes, D.}
\newblock Long wave approximations for water waves.
\newblock {\em Arch. Ration. Mech. Anal. 178}, 3 (2005), 373--410.

\bibitem{Bona:2008ff}
{\sc Bona, J.~L., Lannes, D., and Saut, J.-C.}
\newblock Asymptotic models for internal waves.
\newblock {\em J. Math. Pures Appl. 89\/} (2008), 538--566.

\bibitem{bonasmith}
{\sc Bona, J.~L., and Smith, R.}
\newblock A model for the two-way propagation of water waves in a channel.
\newblock In {\em Mathematical Proceedings of the Cambridge Philosophical
  Society\/} (1976), vol.~79, Cambridge University Press, pp.~167--182.

\bibitem{Bonneton2010}
{\sc Bonneton, P., Bruneau, N., and Castelle, B.}
\newblock Large-scale vorticity generation due to dissipating waves in the surf
  zone.
\newblock {\em Discrete Contin. Dyn. Syst. Ser. B 13\/} (2010), 729--738.

\bibitem{BCLM}
{\sc Bonneton, P., Chazel, F., Lannes, D., Marche, F., and Tissier, M.}
\newblock A splitting approach for the fully nonlinear and weakly dispersive
  {G}reen--{N}aghdi model.
\newblock {\em J. Comput. Phys 230\/} (2011), 1479--1498.

\bibitem{BonnetonLannes2017}
{\sc Bonneton, P., and Lannes, D.}
\newblock Recovering water wave elevation from pressure measurements.
\newblock {\em Journal of Fluid Mechanics 833\/} (2017), 399--429.

\bibitem{Bonneton2018}
{\sc Bonneton, P., Lannes, D., Martins, K., and Michallet, H.}
\newblock A nonlinear weakly dispersive method for recovering the elevation of
  irrotational surface waves from pressure measurements.
\newblock {\em Coastal Engineering 138\/} (2018), 1--8.

\bibitem{bosi2019spectral}
{\sc Bosi, U., Engsig-Karup, A.~P., Eskilsson, C., and Ricchiuto, M.}
\newblock A spectral/hp element depth-integrated model for nonlinear wave--body
  interaction.
\newblock {\em Computer Methods in Applied Mechanics and Engineering 348\/}
  (2019), 222--249.

\bibitem{bouchut2015}
{\sc Bouchut, F., Fernandez-Nieto, E.~D., Mangeney, A., and Narbona-Reina, G.}
\newblock A two-phase shallow debris flow model with energy balance.
\newblock {\em ESAIM: Mathematical Modelling and Numerical Analysis 49}, 1
  (2015), 101--140.

\bibitem{Bresch2009}
{\sc Bresch, D.}
\newblock Shallow-water equations and related topics.
\newblock {\em Handbook of differential equations: evolutionary equations 5\/}
  (2009), 1--104.

\bibitem{bresch2019waves}
{\sc Bresch, D., Lannes, D., and Metivier, G.}
\newblock Waves interacting with a partially immersed obstacle in the
  boussinesq regime.
\newblock {\em arXiv preprint arXiv:1902.04837\/} (2019).

\bibitem{breschmetivier}
{\sc Bresch, D., and M{\'e}tivier, G.}
\newblock Anelastic limits for {E}uler-type systems.
\newblock {\em Appl. Math. Res. Express. AMRX}, 2 (2010), 119--141.

\bibitem{brocchini}
{\sc Brocchini, M.}
\newblock A reasoned overview on boussinesq-type models: the interplay between
  physics, mathematics and numerics.
\newblock {\em Proceedings of the Royal Society A: Mathematical, Physical and
  Engineering Sciences 469}, 2160 (2013), 20130496.

\bibitem{buckmaster2019}
{\sc Buckmaster, T., Shkoller, S., and Vicol, V.}
\newblock Formation of shocks for 2{D} isentropic compressible {E}uler.
\newblock {\em arXiv preprint arXiv:1907.03784\/} (2019).

\bibitem{Burtea2016b}
{\sc Burtea, C.}
\newblock Long time existence results for bore-type initial data for
  {BBM-Boussinesq} systems.
\newblock {\em Journal of Differential Equations 261}, 9 (2016), 4825 -- 4860.

\bibitem{Burtea2016}
{\sc Burtea, C.}
\newblock New long time existence results for a class of {B}oussinesq-type
  systems.
\newblock {\em Journal de Math{\'e}matiques Pures et Appliqu{\'e}es 106}, 2
  (2016), 203 -- 236.

\bibitem{Camassa}
{\sc Camassa, R., and Holm, D.~D.}
\newblock An integrable shallow water equation with peaked solitons.
\newblock {\em Phys. Rev. Lett. 71\/} (1993), 1661--1664.

\bibitem{CCG}
{\sc Castro, A., Cordoba, D., and Gancedo, F.}
\newblock Singularity formations for a surface wave model.
\newblock {\em Nonlinearity 23\/} (2010), 2835--2847.

\bibitem{CastroLannes2}
{\sc Castro, A., and Lannes, D.}
\newblock Fully nonlinear long-wave models in the presence of vorticity.
\newblock {\em J. Fluid Mech. 759\/} (2014), 642--675.

\bibitem{CastroLannes1}
{\sc Castro, A., and Lannes, D.}
\newblock Well-posedness and shallow-water stability for a new {H}amiltonian
  formulation of the water waves equations with vorticity.
\newblock {\em Indiana Univ. Math. J. 64\/} (2015), 1169--1270.

\bibitem{casulli}
{\sc Casulli, V., and Stelling, G.~S.}
\newblock Numerical simulation of 3d quasi-hydrostatic, free-surface flows.
\newblock {\em Journal of Hydraulic Engineering 124}, 7 (1998), 678--686.

\bibitem{Chazel}
{\sc Chazel, F.}
\newblock Influence of bottom topography on long water waves.
\newblock {\em M2AN Math. Model. Numer. Anal. 41}, 4 (2007), 771--799.

\bibitem{floarticle}
{\sc Chazel, F., Benoit, M., Ern, A., and Piperno, S.}
\newblock A double-layer boussinesq-type model for highly nonlinear and
  dispersive waves.
\newblock {\em Proceedings of The Royal Society A Mathematical Physical and
  Engineering Sciences 465\/} (03 2009).

\bibitem{Chazel2011}
{\sc Chazel, F., Lannes, D., and Marche, F.}
\newblock Numerical simulation of strongly nonlinear and dispersive waves using
  a {G}reen-{N}aghdi model.
\newblock {\em J. Sci. Comput. 48}, 1-3 (2011), 105--116.

\bibitem{Chen2012}
{\sc Chen, G.-Q., and Perepelitsa, M.}
\newblock Shallow water equations: viscous solutions and inviscid limit.
\newblock {\em Zeitschrift f{\"u}r angewandte Mathematik und Physik 63}, 6
  (2012), 1067--1084.

\bibitem{Choi1995}
{\sc Choi, W.}
\newblock Nonlinear evolution equations for two-dimensional surface waves in a
  fluid of finite depth.
\newblock {\em J. Fluid Mech. 295\/} (1995), 381--394.

\bibitem{Choi:2003fv}
{\sc Choi, W.}
\newblock Strongly nonlinear long gravity waves in uniform shear flows.
\newblock {\em Phys. Rev. E 68\/} (2003), 26305.

\bibitem{christodoulou2014}
{\sc Christodoulou, D., and Miao, S.}
\newblock {\em Compressible Flow and Euler's Equations}, vol.~9.
\newblock International Press Somerville, MA, 2014.

\bibitem{ClamondConstantin}
{\sc Clamond, D., and Constantin, A.}
\newblock Recovery of steady periodic wave profiles from pressure measurements
  at the bed.
\newblock {\em Journal of Fluid Mechanics 714\/} (2013), 463--475.

\bibitem{ConstantinEscher}
{\sc Constantin, A., and Escher, J.}
\newblock Wave breaking for nonlinear nonlocal shallow water equations.
\newblock {\em Acta Math. 181}, 2 (1998), 229--243.

\bibitem{ConstantinLannes}
{\sc Constantin, A., and Lannes, D.}
\newblock The hydrodynamical relevance of the {C}amassa-{H}olm and
  {D}egasperis-{P}rocesi equations.
\newblock {\em Arch. Ration. Mech. Anal. 192}, 1 (2009), 165--186.

\bibitem{Coulombel2003}
{\sc Coulombel, J.-F.}
\newblock Stability of multidimensional undercompressive shock waves.
\newblock {\em Interfaces and Free Boundaries 5}, 4 (2003), 367--390.

\bibitem{Coutand2007}
{\sc Coutand, D., and Shkoller, S.}
\newblock Well-posedness of the free-surface incompressible euler equations
  with or without surface tension.
\newblock {\em Journal of the American Mathematical Society 20}, 3 (2007),
  829--930.

\bibitem{CoutandShkoller1}
{\sc Coutand, D., and Shkoller, S.}
\newblock Well-posedness in smooth function spaces for moving-boundary 1-{D}
  compressible {E}uler equations in physical vacuum.
\newblock {\em Comm. Pure Appl. Math. 64}, 3 (2011), 328--366.

\bibitem{CoutandShkoller2}
{\sc Coutand, D., and Shkoller, S.}
\newblock Well-posedness in smooth function spaces for the moving-boundary
  three-dimensional compressible {E}uler equations in physical vacuum.
\newblock {\em Arch. Ration. Mech. Anal. 206}, 2 (2012), 515--616.

\bibitem{Craig2006}
{\sc Craig, W., Guyenne, P., Hammack, J., Henderson, D., and Sulem, C.}
\newblock Solitary water wave interactions.
\newblock {\em Phys. Fluids 18}, 5 (2006), 057106, 25.

\bibitem{Craig:2005zl}
{\sc Craig, W., Guyenne, P., and Kalisch, H.}
\newblock Hamiltonian long-wave expansions for free surfaces and interfaces.
\newblock {\em Communication on Pure and Applied Mathematics 58}, 12 (2005),
  1587--1641.

\bibitem{Craig2}
{\sc Craig, W., and Sulem, C.}
\newblock Numerical simulation of gravity waves.
\newblock {\em J. Comput. Phys 108\/} (1993), 73--83.

\bibitem{Craig1}
{\sc Craig, W., Sulem, C., and Sulem, P.-L.}
\newblock Nonlinear modulation of gravity waves: a rigorous approach.
\newblock {\em Nonlinearity 5}, 2 (1992), 497--522.

\bibitem{de2019priori}
{\sc de~Poyferr{\'e}, T.}
\newblock A priori estimates for water waves with emerging bottom.
\newblock {\em Archive for Rational Mechanics and Analysis 232}, 2 (2019),
  763--812.

\bibitem{DP}
{\sc Degasperis, A., and Procesi, M.}
\newblock Asymptotic integrability.
\newblock {\em Symmetry and perturbation theory 1}, 1 (1999), 23--37.

\bibitem{Delort}
{\sc Delort, J.-M.}
\newblock Long time existence results for solutions of water waves equations.
\newblock In {\em Proc. Int. Cong. of Math.\/} (2018), vol.~2, pp.~2209--2228.

\bibitem{Diperna}
{\sc DiPerna, R.~J.}
\newblock Convergence of the viscosity method for isentropic gas dynamics.
\newblock {\em Communications in mathematical physics 91}, 1 (1983), 1--30.

\bibitem{dougalis2009}
{\sc Dougalis, V., Mitsotakis, D., and Saut, J.}
\newblock On initial-boundary value problems for a boussinesq system of bbm-bbm
  type in a plane domain.
\newblock {\em Discrete Contin. Dyn. Syst 23}, 4 (2009), 1191--1204.

\bibitem{dougalis2009bis}
{\sc Dougalis, V., Mitsotakis, D., and Saut, J.}
\newblock On initial-boundary value problems for a boussinesq system of bbm-bbm
  type in a plane domain.
\newblock {\em Discrete Contin. Dyn. Syst 23}, 4 (2009), 1191--1204.

\bibitem{dougalis2010}
{\sc Dougalis, V., Mitsotakis, D., and Saut, J.-C.}
\newblock Boussinesq systems of bona-smith type on plane domains: theory and
  numerical analysis.
\newblock {\em Journal of Scientific Computing 44}, 2 (2010), 109--135.

\bibitem{duchene2011}
{\sc Duch{\^e}ne, V.}
\newblock Asymptotic models for the generation of internal waves by a moving
  ship, and the dead-water phenomenon.
\newblock {\em Nonlinearity 24}, 8 (2011), 2281.

\bibitem{Duchene}
{\sc Duch{\^e}ne, V.}
\newblock Rigorous justification of the {F}avrie--{G}avrilyuk approximation to
  the {S}erre--{G}reen--{N}aghdi model.
\newblock {\em Nonlinearity 32}, 10 (2019), 3772.

\bibitem{DucheneIguchi}
{\sc Duch\^ene, V., and Iguchi, T.}
\newblock A {H}amiltonian structure of the {I}sobe-{K}akinuma model for water
  waves.
\newblock {\em submitted\/} (2019).

\bibitem{duchene2016}
{\sc Duch{\^e}ne, V., and Israwi, S.}
\newblock Well-posedness of the {G}reen-{N}aghdi and {B}oussinesq-{P}eregrine
  systems.
\newblock {\em arXiv preprint arXiv:1611.04305\/} (2016).

\bibitem{duran2017}
{\sc Duran, A., and Marche, F.}
\newblock A discontinuous {G}alerkin method for a new class of
  {G}reen--{N}aghdi equations on simplicial unstructured meshes.
\newblock {\em Applied Mathematical Modelling 45\/} (2017), 840--864.

\bibitem{EGW}
{\sc Ehrnstrom, M., Groves, M., and Wahlen, E.}
\newblock On the existence and stability of solitary-wave solutions to a class
  of evolution equations of whitham type.
\newblock {\em Nonlinearity 25\/} (2012), 1--34.

\bibitem{EhrnstromWahlen}
{\sc Ehrnstrom, M., and Wahlen, E.}
\newblock On {W}hitham's conjecture of a highest cusped wave for a nonlocal
  dispersive equation.
\newblock {\em Ann. Henri Poincar\'e\/} (to appear).

\bibitem{favrie2017}
{\sc Favrie, N., and Gavrilyuk, S.}
\newblock A rapid numerical method for solving {S}erre--{G}reen--{N}aghdi
  equations describing long free surface gravity waves.
\newblock {\em Nonlinearity 30}, 7 (2017), 2718.

\bibitem{fernandez}
{\sc Fernandez-Nieto, E., Parisot, M., Penel, Y., and Sainte-Marie, J.}
\newblock Layer-averaged approximation of euler equations for free surface
  flows with a non-hydrostatic pressure.
\newblock {\em Commun. Math. Sci.(hal-01324012v3)\/} (2018).

\bibitem{fernandez2014}
{\sc Fern{\'a}ndez-Nieto, E.~D., Kon{\'e}, E., and Rebollo, T.~C.}
\newblock A multilayer method for the hydrostatic {N}avier-{S}tokes equations:
  a particular weak solution.
\newblock {\em Journal of Scientific Computing 60}, 2 (2014), 408--437.

\bibitem{Filippini2015}
{\sc Filippini, A.~G., Bellec, S., Colin, M., and Ricchiuto, M.}
\newblock On the nonlinear behaviour of {B}oussinesq type models:
  Amplitude-velocity vs amplitude-flux forms.
\newblock {\em Coastal Engineering 99\/} (2015), 109--123.

\bibitem{filippini}
{\sc Filippini, A.~G., Kazolea, M., and Ricchiuto, M.}
\newblock A flexible genuinely nonlinear approach for nonlinear wave
  propagation, breaking and run-up.
\newblock {\em Journal of Computational Physics 310\/} (2016), 381--417.

\bibitem{Fokas}
{\sc Fokas, A.~S., and Fuchssteiner, B.}
\newblock Symplectic structures, their {B}acklund transformation and hereditary
  symmetries.
\newblock {\em Physica D 4\/} (1981), 821--831.

\bibitem{Freistuhler1998}
{\sc Freist{\"u}hler, H.}
\newblock Some results on the stability of non-classical shock waves.
\newblock {\em J. Partial Differential Equations 11\/} (1998), 25--38.

\bibitem{fujiwara2015}
{\sc Fujiwara, H., and Iguchi, T.}
\newblock A shallow water approximation for water waves over a moving bottom.
\newblock In {\em Nonlinear Dynamics in Partial Differential Equations\/}
  (Tokyo, Japan, 2015), Mathematical Society of Japan, pp.~77--88.

\bibitem{gallagher}
{\sc Gallagher, I., and Saint-Raymond, L.}
\newblock On the influence of the earth's rotation on geophysical flows.
\newblock {\em Handbook of Mathematical Fluid Dynamics 4\/} (2007), 201--329.

\bibitem{Gavrilyuk2012}
{\sc Gavrilyuk, S., and Gouin, H.}
\newblock Geometric evolution of the reynolds stress tensor.
\newblock {\em International Journal of Engineering Science 59\/} (2012),
  65--73.

\bibitem{GavrilyukNkonga}
{\sc Gavrilyuk, S., Nkonga, B., Shyue, K.-M., and Truskinovsky, L.}
\newblock Generalized riemann problem for dispersive equations.
\newblock hal-01958328, 2018.

\bibitem{godlewski2018congested}
{\sc Godlewski, E., Parisot, M., Sainte-Marie, J., and Wahl, F.}
\newblock Congested shallow water model: roof modeling in free surface flow.
\newblock {\em ESAIM: Mathematical Modelling and Numerical Analysis 52}, 5
  (2018), 1679--1707.

\bibitem{GreenNaghdi}
{\sc Green, A.~E., and Naghdi, P.~M.}
\newblock A derivation of equations for wave propagation in water of variable
  depth.
\newblock {\em J. Fluid Mech. 78\/} (1976), 237--246.

\bibitem{Guermond}
{\sc Guermond, J.-L., Pasquetti, R., and Popov, B.}
\newblock Entropy viscosity method for nonlinear conservation laws.
\newblock {\em Journal of Computational Physics 230}, 11 (2011), 4248--4267.

\bibitem{Henry}
{\sc Henry, D.}
\newblock On the pressure transfer function for solitary water waves with
  vorticity.
\newblock {\em Mathematische Annalen 357}, 1 (2013), 23--30.

\bibitem{HenryThomas}
{\sc Henry, D., and Thomas, G.}
\newblock Prediction of the free-surface elevation for rotational water waves
  using the recovery of pressure at the bed.
\newblock {\em Philosophical Transactions of the Royal Society A: Mathematical,
  Physical and Engineering Sciences 376}, 2111 (2017), 20170102.

\bibitem{hu2009global}
{\sc Hu, J.}
\newblock Global well-posedness of the {BCL} system with viscosity.
\newblock {\em Chinese Annals of Mathematics, Series B 30}, 2 (2009), 153--172.

\bibitem{Hur}
{\sc Hur, V.}
\newblock Wave breaking in the {W}hitham equation.
\newblock {\em Advances in Mathematics 317\/} (2017), 410--437.

\bibitem{Iguchi2009}
{\sc Iguchi, T.}
\newblock A shallow water approximation for water waves.
\newblock {\em J. Math. Kyoto Univ. 49}, 1 (2009), 13--55.

\bibitem{iguchi2018}
{\sc Iguchi, T.}
\newblock Isobe--kakinuma model for water waves as a higher order shallow water
  approximation.
\newblock {\em Journal of Differential Equations 265}, 3 (2018), 935--962.

\bibitem{iguchi2}
{\sc Iguchi, T.}
\newblock A mathematical justification of the {I}sobe--{K}akinuma model for
  water waves with and without bottom topography.
\newblock {\em Journal of Mathematical Fluid Mechanics 20}, 4 (2018),
  1985--2018.

\bibitem{IguchiLannes}
{\sc Iguchi, T., and Lannes, D.}
\newblock Hyperbolic free boundary problems and applications to wave-structure
  interactions.
\newblock {\em Indiana Univ. Math. J.\/} (to appear).

\bibitem{ionescu}
{\sc Ionescu, A.~D., and Pusateri, F.}
\newblock Recent advances on the global regularity for irrotational water
  waves.
\newblock {\em Philosophical Transactions of the Royal Society A: Mathematical,
  Physical and Engineering Sciences 376}, 2111 (2017), 20170089.

\bibitem{isobe1995}
{\sc Isobe, M.}
\newblock Time-dependent mild-slope equations for random waves.
\newblock In {\em Coastal Engineering 1994}. 1995, pp.~285--299.

\bibitem{Israwi2010b}
{\sc Israwi, S.}
\newblock Derivation and analysis of a new $2d$ {Green-Naghdi} system.
\newblock {\em Nonlinearity 23\/} (2010), 2889--2904.

\bibitem{Israwi2010}
{\sc Israwi, S.}
\newblock Variable depth {K}d{V} equations and generalizations to more
  nonlinear regimes.
\newblock {\em M2AN Math. Model. Numer. Anal. 44}, 2 (2010), 347--370.

\bibitem{Israwi2011}
{\sc Israwi, S.}
\newblock Large time existence for 1{D} {Green-Naghdi} equations.
\newblock {\em Nonlinear Analysis: Theory, Methods \& Applications 74}, 1
  (2011), 81 -- 93.

\bibitem{JangMasmoudi1}
{\sc Jang, J., and Masmoudi, N.}
\newblock Well-posedness for compressible {E}uler equations with physical
  vacuum singularity.
\newblock {\em Comm. Pure Appl. Math. 62}, 10 (2009), 1327--1385.

\bibitem{JangMasmoudi2}
{\sc Jang, J., and Masmoudi, N.}
\newblock Well-posedness of compressible {E}uler equations in a physical
  vacuum.
\newblock {\em Comm. Pure Appl. Math. 68}, 1 (2015), 61--111.

\bibitem{Johnson1973}
{\sc Johnson, R.}
\newblock On the development of a solitary wave moving over an uneven bottom.
\newblock In {\em Mathematical Proceedings of the Cambridge Philosophical
  Society\/} (1973), vol.~73, Cambridge University Press, pp.~183--203.

\bibitem{kakinuma2001}
{\sc Kakinuma, T.}
\newblock A set of fully nonlinear equations for surface and internal gravity
  waves.
\newblock {\em WIT Transactions on The Built Environment 58\/} (2001).

\bibitem{Kazakova}
{\sc Kazakova, M.}
\newblock {\em Dispersive models of ocean waves propagation: Numerical issues
  and mod- elling}.
\newblock PhD thesis, Universit{\'e} de Toulouse, 2018.

\bibitem{KazakovaNoble}
{\sc Kazakova, M., and Noble, P.}
\newblock Discrete transparent boundary conditions for the linearized
  {G}reen-{N}aghdi system of equations.
\newblock arXiv: 1710.04016, 2017.

\bibitem{Kazakova2}
{\sc Kazakova, M., and Richard, G.~L.}
\newblock A new model of shoaling and breaking waves: one-dimensional solitary
  wave on a mild sloping beach.
\newblock {\em Journal of Fluid Mechanics 862\/} (2019), 552--591.

\bibitem{kazolea2}
{\sc Kazolea, M., and Ricchiuto, M.}
\newblock On wave breaking for {B}oussinesq-type models.
\newblock {\em Ocean Modelling 123\/} (2018), 16--39.

\bibitem{Kennedy}
{\sc Kennedy, A.~B., Chen, Q., Kirby, J.~T., and Dalrymple, R.~A.}
\newblock Boussinesq modeling of wave transformation, breaking, and runup. {I}:
  1d.
\newblock {\em J. Wtrwy., Port, Coast., and Oc. Engrg. 126}, 1 (2000), 39--47.

\bibitem{Kim}
{\sc Kim, J.~W., Bai, K.~J., Ertekin, R.~C., and Webster, W.~C.}
\newblock A derivation of the {G}reen-{N}aghdi equations for irrotational
  flows.
\newblock {\em J. Engrg. Math. 40}, 1 (2001), 17--42.

\bibitem{klein2018whitham}
{\sc Klein, C., Linares, F., Pilod, D., and Saut, J.-C.}
\newblock On {W}hitham and related equations.
\newblock {\em Studies in Applied Mathematics 140}, 2 (2018), 133--177.

\bibitem{klein2012numerical}
{\sc Klein, C., and Saut, J.-C.}
\newblock Numerical study of blow up and stability of solutions of generalized
  kadomtsev--petviashvili equations.
\newblock {\em Journal of nonlinear science 22}, 5 (2012), 763--811.

\bibitem{KleinSaut}
{\sc Klein, C., and Saut, J.-C.}
\newblock {\em Nonlinear dispersive {PDE}'s. Inverse Scattering and {PDE}
  Methods}.
\newblock to appear, 2020.

\bibitem{kwak2019b}
{\sc Kwak, C., and Mu{\~n}oz, C.}
\newblock Asymptotic dynamics for the small data weakly dispersive
  one-dimensional {H}amiltonian abcd system.
\newblock {\em arXiv preprint arXiv:1902.00454\/} (2019).

\bibitem{kwak2019}
{\sc Kwak, C., Mu{\~n}oz, C., Poblete, F., and Pozo, J.~C.}
\newblock The scattering problem for {H}amiltonian abcd {B}oussinesq systems in
  the energy space.
\newblock {\em Journal de Math{\'e}matiques Pures et Appliqu{\'e}es 127\/}
  (2019), 121--159.

\bibitem{Lannes2005}
{\sc Lannes, D.}
\newblock Well-posedness of the water-waves equations.
\newblock {\em J. Amer. Math. Soc. 18}, 3 (2005), 605--654 (electronic).

\bibitem{Lannes2013}
{\sc Lannes, D.}
\newblock {\em The Water Waves Problem: Mathematical Analysis and Asymptotics},
  vol.~188 of {\em Mathematical Surveys and Monographs}.
\newblock AMS, 2013.

\bibitem{Lannes2017}
{\sc Lannes, D.}
\newblock On the dynamics of floating structures.
\newblock {\em Annals of PDE 3}, 1 (2017), 11.

\bibitem{BonnetonLannes}
{\sc Lannes, D., and Bonneton, P.}
\newblock Derivation of asymptotic two-dimensional time-dependent equations for
  surface water wave propagation.
\newblock {\em Physics of Fluids 21}, 1 (Jan 2009), 016601.

\bibitem{LannesMarche}
{\sc Lannes, D., and Marche, F.}
\newblock Nonlinear wave--current interactions in shallow water.
\newblock {\em Studies in Applied Mathematics 136}, 4 (2016), 382--423.

\bibitem{LannesMetivier}
{\sc Lannes, D., and M\'{e}tivier, G.}
\newblock The shoreline problem for the one-dimensional shallow water and
  {G}reen-{N}aghdi equations.
\newblock {\em J. \'{E}c. polytech. Math. 5\/} (2018), 455--518.

\bibitem{LannesSaut}
{\sc Lannes, D., and Saut, J.-C.}
\newblock Weakly transverse boussinesq systems and the kadomtsev-petviashvili
  approximation.
\newblock {\em Nonlinearity 19\/} (2006), 2853--2875.

\bibitem{lannes2013remarks}
{\sc Lannes, D., and Saut, J.-C.}
\newblock Remarks on the full dispersion kadomtsev-petviashvli equation.

\bibitem{LannesWeynans}
{\sc Lannes, D., and Weynans, L.}
\newblock Generating boundary conditions for a boussinesq system.
\newblock Tech. Rep. arxiv.org/abs/1902.03973, 2019.

\bibitem{Lax}
{\sc Lax, P.~D.}
\newblock Mathematics and physics.
\newblock {\em Bull. Amer. Math. Soc. (N.S.) 45}, 1 (2008), 135--152
  (electronic).

\bibitem{Li1985}
{\sc Li, T.-T., and Yu, W.-C.}
\newblock Boundary value problems for quasilinear hyperbolic systems.
\newblock {\em Duke University Mathematics ser. 5\/} (1985).

\bibitem{Li}
{\sc Li, Y.~A.}
\newblock A shallow-water approximation to the full water wave problem.
\newblock {\em Comm. Pure Appl. Math. 59}, 9 (2006), 1225--1285.

\bibitem{linares2012}
{\sc Linares, F., Pilod, D., and Saut, J.-C.}
\newblock Well-posedness of strongly dispersive two-dimensional surface wave
  {B}oussinesq systems.
\newblock {\em SIAM Journal on Mathematical Analysis 44}, 6 (2012), 4195--4221.

\bibitem{Lindblad}
{\sc Lindblad, H.}
\newblock Well-posedness for the motion of an incompressible liquid with free
  surface boundary.
\newblock {\em Ann. of Math. (2) 162}, 1 (2005), 109--194.

\bibitem{Lions}
{\sc Lions, P.-L., Perthame, B., and Souganidis, P.~E.}
\newblock Existence and stability of entropy solutions for the hyperbolic
  systems of isentropic gas dynamics in {E}ulerian and {L}agrangian
  coordinates.
\newblock {\em Comm. Pure Appl. Math. 49}, 6 (1996), 599--638.

\bibitem{Liu}
{\sc Liu, T.~P., and Yang, T.}
\newblock Compressible flow with vacuum and physical singularity.
\newblock {\em Methods Appl. Anal. 7\/} (2000), 495--509.

\bibitem{luk2018shock}
{\sc Luk, J., and Speck, J.}
\newblock Shock formation in solutions to the 2d compressible {Euler} equations
  in the presence of non-zero vorticity.
\newblock {\em Inventiones mathematicae 214}, 1 (2018), 1--169.

\bibitem{Luke}
{\sc Luke, J.~C.}
\newblock A variational principle for a fluid with a free surface.
\newblock {\em J. Fluid Mech. 27\/} (1967), 395--397.

\bibitem{lynett2004linear}
{\sc Lynett, P.~J., and Liu, P. L.-F.}
\newblock Linear analysis of the multi-layer model.
\newblock {\em Coastal Engineering 51}, 5-6 (2004), 439--454.

\bibitem{Ma}
{\sc Ma, G., Shi, F., and Kirby, J.~T.}
\newblock Shock-capturing non-hydrostatic model for fully dispersive surface
  wave processes.
\newblock {\em Ocean Modelling 43-44\/} (2012), 22--35.

\bibitem{Majda1}
{\sc Majda, A.}
\newblock {\em The existence of multi-dimensional shock fronts}, vol.~281.
\newblock American Mathematical Soc., 1983.

\bibitem{Majda2}
{\sc Majda, A.}
\newblock {\em The stability of multi-dimensional shock fronts}, vol.~275.
\newblock American Mathematical Soc., 1983.

\bibitem{Majda3}
{\sc Majda, A.}
\newblock {\em Compressible fluid flow and systems of conservation laws in
  several space variables}, vol.~53.
\newblock Springer Science \& Business Media, 2012.

\bibitem{Marche}
{\sc Marche, F.}
\newblock {\em Theoretical and Numerical Study of Shallow Water Models.
  Applications to Nearshore Hydrodynamics}.
\newblock PhD thesis, Universit{\'e} de Bordeaux, 2005.

\bibitem{Matsuno1992}
{\sc Matsuno, Y.}
\newblock Nonlinear evolutions of surface gravity waves on fluid of finite
  depth.
\newblock {\em Phys. Rev. Lett. 69}, 4 (1992), 609--611.

\bibitem{Matsuno1993}
{\sc Matsuno, Y.}
\newblock Nonlinear evolution of surface gravity waves over an uneven bottom.
\newblock {\em J. Fluid Mech. 249\/} (1993), 121--133.

\bibitem{matsuno}
{\sc Matsuno, Y.}
\newblock Hamiltonian structure for two-dimensional extended {Green--Naghdi}
  equations.
\newblock {\em Proceedings of the Royal Society A: Mathematical, Physical and
  Engineering Sciences 472}, 2190 (2016), 20160127.

\bibitem{Melinand2017}
{\sc Melinand, B.}
\newblock Coriolis effect on water waves.
\newblock {\em ESAIM: Mathematical Modelling and Numerical Analysis 51}, 5
  (2017), 1957--1985.

\bibitem{benoit2017}
{\sc Mesognon-Gireau, B.}
\newblock The {C}auchy problem on large time for a {Boussinesq-Peregrine}
  equation with large topography variations.
\newblock {\em Advances in Differential Equations 22}, 7/8 (2017), 457--504.

\bibitem{Metivier2001}
{\sc M{\'e}tivier, G.}
\newblock Stability of multidimensional shocks.
\newblock {\em Advances in the theory of shock waves\/} (2001), 25--103.

\bibitem{Metivier2012}
{\sc M{\'e}tivier, G.}
\newblock {\em Small Viscosity and Boundary Layer Methods: Theory, Stability
  Analysis, and Applications}.
\newblock Springer Science \& Business Media, 2012.

\bibitem{miles1977hamilton}
{\sc Miles, J.~W.}
\newblock On hamilton's principle for surface waves.
\newblock {\em Journal of Fluid Mechanics 83}, 1 (1977), 153--158.

\bibitem{Miles1979}
{\sc Miles, J.~W.}
\newblock On the {K}orteweg---de {V}ries equation for a gradually varying
  channel.
\newblock {\em Journal of Fluid Mechanics 91}, 1 (1979), 181--190.

\bibitem{Ming2012}
{\sc Ming, M., Saut, J.~C., and Zhang, P.}
\newblock Long-time existence of solutions to {B}oussinesq systems.
\newblock {\em SIAM Journal on Mathematical Analysis 44}, 6 (2012), 4078--4100.

\bibitem{ming2017water}
{\sc Ming, M., and Wang, C.}
\newblock Water waves problem with surface tension in a corner domain i: A
  priori estimates with constrained contact angle.
\newblock {\em arXiv preprint arXiv:1709.00180\/} (2017).

\bibitem{ming2018water}
{\sc Ming, M., and Wang, C.}
\newblock Water waves problem with surface tension in a corner domain ii: the
  local well-posednes.
\newblock {\em arXiv preprint arXiv:1812.09911\/} (2018).

\bibitem{Mouragues2019}
{\sc Mouragues, A., Bonneton, P., Lannes, D., Castelle, B., and Marieu, V.}
\newblock Field data-based evaluation of methods for recovering surface wave
  elevation from pressure measurements.
\newblock {\em Coastal Engineering 150\/} (2019), 147 -- 159.

\bibitem{murakami2015solvability}
{\sc Murakami, Y., and Iguchi, T.}
\newblock Solvability of the initial value problem to a model system for water
  waves.
\newblock {\em Kodai Mathematical Journal 38}, 2 (2015), 470--491.

\bibitem{Musumeci}
{\sc Musumeci, R.~E., Svendsen, I.~A., and Veeramony, J.}
\newblock The flow in the surf zone: a fully nonlinear {B}oussinesq-type of
  approach.
\newblock {\em Coastal Engineering 52\/} (2005), 565--598.

\bibitem{Nalimov}
{\sc Nalimov, V.~I.}
\newblock The {C}auchy-{P}oisson problem (in russian).
\newblock {\em Dyn. Splosh. Sredy 104-210}, 18 (1974).

\bibitem{Naumkin}
{\sc Naumkin, P.~I., and Shishmarev, I.~A.}
\newblock {\em Nonlinear nonlocal equations in the theory of waves}.
\newblock Translation of mathematical monographs. Providence, RI: American
  Mathematical Society, 1994.

\bibitem{nemoto2018}
{\sc Nemoto, R., and Iguchi, T.}
\newblock Solvability of the initial value problem to the {I}sobe--{K}akinuma
  model for water waves.
\newblock {\em Journal of Mathematical Fluid Mechanics 20}, 2 (2018), 631--653.

\bibitem{Nwogu1993}
{\sc Nwogu, O.}
\newblock Alternative form of {B}oussinesq equations for nearshore wave
  propagation.
\newblock {\em J. Wtrwy., Port, Coast., and Oc. Engrg. 119\/} (1993), 616--638.

\bibitem{Oliveras2012}
{\sc Oliveras, K.~L., Vasan, V., Deconinck, B., and Henderson, D.}
\newblock Recovering the water-wave profile from pressure measurements.
\newblock {\em SIAM J. Appl. Math. 72\/} (2012), 897--918.

\bibitem{Peregrine67}
{\sc Peregrine, D.~H.}
\newblock Long waves on a beach.
\newblock {\em J. Fluid Mech. 27}, 4 (1967), 815--827.

\bibitem{RG3}
{\sc Richard, G., and Gavrilyuk, S.}
\newblock Modelling turbulence generation in solitary waves on shear shallow
  water flows.
\newblock {\em Journal of Fluid Mechanics 773\/} (2015), 49--74.

\bibitem{Duran}
{\sc Richard, G.~L., Duran, A., and Fabr{\`e}ges, B.}
\newblock A new model of shoaling and breaking waves. part 2. {R}un-up and
  two-dimensional waves.
\newblock {\em Journal of Fluid Mechanics 867\/} (2019), 146--194.

\bibitem{RG1}
{\sc Richard, G.~L., and Gavrilyuk, S.~L.}
\newblock A new model of roll waves: comparison with {B}rock's experiments.
\newblock {\em J. Fluid Mech. 698\/} (2012), 374--405.

\bibitem{RG2}
{\sc Richard, G.~L., and Gavrilyuk, S.~L.}
\newblock The classical hydraulic jump in a model of shear shallow-water flows.
\newblock {\em J. Fluid Mech. 725\/} (2013), 492--521.

\bibitem{saut2013asymptotic}
{\sc Saut, J.-C.}
\newblock {\em Asymptotic models for surface and internal waves}.
\newblock IMPA, 2013.

\bibitem{Saut2017}
{\sc Saut, J.-C., Wang, C., and Xu, L.}
\newblock The {C}auchy problem on large time for surface-waves-type
  {B}oussinesq systems {II}.
\newblock {\em SIAM Journal on Mathematical Analysis 49}, 4 (2017), 2321--2386.

\bibitem{Saut2012}
{\sc Saut, J.-C., and Xu, L.}
\newblock The {C}auchy problem on large time for surface waves {B}oussinesq
  systems.
\newblock {\em Journal de math{\'e}matiques pures et appliqu{\'e}es 97}, 6
  (2012), 635--662.

\bibitem{schaffer}
{\sc Sch{\"a}ffer, H.~A., Madsen, P.~A., and Deigaard, R.}
\newblock A {B}oussinesq model for waves breaking in shallow water.
\newblock {\em Coastal engineering 20}, 3-4 (1993), 185--202.

\bibitem{Schneider:2000dz}
{\sc Schneider, G., and Wayne, C.~E.}
\newblock The long-wave limit for the water wave problem i. the case of zero
  surface tension.
\newblock {\em Communication on Pure and Applied Mathematics 53}, 12 (2000),
  1475--1535.

\bibitem{Schochet1986}
{\sc Schochet, S.}
\newblock The compressible {E}uler equations in a bounded domain: existence of
  solutions and the incompressible limit.
\newblock {\em Comm. Math. Phys. 104}, 1 (1986), 49--75.

\bibitem{Schonbek}
{\sc Schonbek, M.~E.}
\newblock Existence of solutions for the {B}oussinesq system of equations.
\newblock {\em J. Differential Equations 42}, 3 (1981), 325--352.

\bibitem{Serre}
{\sc Serre, F.}
\newblock Contribution \`a l'\'etude des \'ecoulements permanents et variables
  dans les canaux.
\newblock {\em La Houille Blanche\/} (1953), 830--872.

\bibitem{Shatah}
{\sc Shatah, J., and Zeng, C.}
\newblock A priori estimates for fluid interface problems.
\newblock {\em Comm. Pure Appl. Math. 61}, 6 (2008), 848--876.

\bibitem{Stelling:2003fk}
{\sc Stelling, G., and Zijlema, M.}
\newblock An accurate and efficient finite-difference algorithm for
  non-hydrostatic free-surface flow with application to wave propagation.
\newblock {\em Int. J. Numer. Meth. Fluids 43\/} (2003), 1--23.

\bibitem{SuGardner}
{\sc Su, C.~H., and Gardner, C.~S.}
\newblock Korteweg-de {V}ries equation and generalizations. {III}. {D}erivation
  of the {K}orteweg-de {V}ries equation and {B}urgers equation.
\newblock {\em J. Math. Phys. 10}, 3 (1969), 536--539.

\bibitem{Bardos}
{\sc Sulem, C., Sulem, P.-L., Bardos, C., and Frisch, U.}
\newblock Finite time analyticity for the two- and three-dimensional
  {K}elvin-{H}elmholtz instability.
\newblock {\em Comm. Math. Phys. 80}, 4 (1981), 485--516.

\bibitem{svendsen}
{\sc Svendsen, I., Yu, K., and Veeramony, J.}
\newblock A {B}oussinesq breaking wave model with vorticity.
\newblock In {\em Coastal Engineering 1996}. 1997, pp.~1192--1204.

\bibitem{Taylor3}
{\sc Taylor, M.~E.}
\newblock {\em Partial differential equations. {III}}, vol.~117 of {\em Applied
  Mathematical Sciences}.
\newblock Springer-Verlag, New York, 1997.
\newblock Nonlinear equations, Corrected reprint of the 1996 original.

\bibitem{Tissier}
{\sc Tissier, M., Bonneton, P., Marche, F., Chazel, F., and Lannes, D.}
\newblock A new approach to handle wave breaking in fully non-linear boussinesq
  models.
\newblock {\em Coastal Engineering 67\/} (2012), 54--66.

\bibitem{tonelli}
{\sc Tonelli, M., and Petti, M.}
\newblock Simulation of wave breaking over complex bathymetries by a boussinesq
  model.
\newblock {\em Journal of Hydraulic Research 49}, 4 (2011), 473--486.

\bibitem{VG1993}
{\sc van Groesen, E., and Pudjaprasetya, S.~R.}
\newblock Uni-directional waves over slowly varying bottom. {I}. {D}erivation
  of a {K}d{V}-type of equation.
\newblock {\em Wave Motion 18}, 4 (1993), 345--370.

\bibitem{Vasan2017}
{\sc Vasan, V., and Oliveras, K.~L.}
\newblock Water-wave profiles from pressure measurements: Extensions.
\newblock {\em Applied Mathematics Letters 68\/} (2017), 175 -- 180.

\bibitem{Wei}
{\sc Wei, G., Kirby, J.~T., Grilli, S.~T., and Subramanya, R.}
\newblock A fully nonlinear {B}oussinesq model for surface waves. {I}. {H}ighly
  nonlinear unsteady waves.
\newblock {\em J. Fluid Mech. 294\/} (1995), 71--92.

\bibitem{WKS}
{\sc Wei, G., Kirby, J.~T., and Sinha, A.}
\newblock Generation of waves in {B}oussinesq models using a source function
  method.
\newblock {\em Coastal Engineering 36\/} (1999), 271--299.

\bibitem{whitham1962}
{\sc Whitham, G.}
\newblock Mass, momentum and energy flux in water waves.
\newblock {\em Journal of Fluid Mechanics 12}, 1 (1962), 135--147.

\bibitem{Whitham}
{\sc Whitham, G.~B.}
\newblock Variational methods and applications to water waves.
\newblock {\em Proc. Roy. Soc. London Ser. A 299\/} (1967).

\bibitem{Whitham_book}
{\sc Whitham, G.~B.}
\newblock {\em Linear and nonlinear waves}.
\newblock Pure and Applied Mathematics (New York). John Wiley \& Sons Inc., New
  York, 1999.
\newblock Reprint of the 1974 original, A Wiley-Interscience Publication.

\bibitem{Wu1}
{\sc Wu, S.}
\newblock Well-posedness in {S}obolev spaces of the full water wave problem in
  2-d.
\newblock {\em Invent. Math. 130}, 1 (1997), 39--72.

\bibitem{Wu2}
{\sc Wu, S.}
\newblock Well-posedness in {S}obolev spaces of the full water wave problem in
  3-d.
\newblock {\em J. Amer. Math. Soc. 12}, 2 (1999), 445--495.

\bibitem{Xue}
{\sc Xue, R.}
\newblock The initial--boundary value problem for the ``good'' {B}oussinesq
  equation on the bounded domain.
\newblock {\em Journal of Mathematical Analysis and Applications 343}, 2
  (2008), 975 -- 995.

\bibitem{Zakharov1968}
{\sc Zakharov, V.~E.}
\newblock Stability of periodic waves of finite amplitude on the surface of a
  deep fluid.
\newblock {\em Journal of Applied Mechanics and Technical Physics 9\/} (1968),
  190--194.

\bibitem{zeitlin2007}
{\sc Zeitlin, V.}
\newblock {\em Nonlinear dynamics of rotating shallow water: Methods and
  advances}, vol.~2.
\newblock Elsevier, 2007.

\end{thebibliography}

\end{document}